\date{\today}
\keywords{}
\author{Romain Dujardin}
\thanks{Research  partially supported by ANR project LAMBDA,  ANR-13-BS01-0002 and  a grant from the  Institut Universitaire de France}
\title{Non-density of stability  for holomorphic mappings on~$\pk$}
\address{Laboratoire de probabilités et modèles aléatoires, UMR 7599, Université Pierre et Marie Curie, 4 place Jussieu, 75005 Paris, France}
\email{romain.dujardin@upmc.fr}
\subjclass[2000]{37F45, 37F10, 37F15}
\newcommand{\cc}{\mathbb{C}}
\newcommand{\bb}{\mathbb{B}}
\newcommand{\re}{\mathbb{R}}
\newcommand{\dd}{\mathbb{D}}
\newcommand{\nn}{\mathbb{N}}
\newcommand{\pp}{\mathbb{P}}
\newcommand{\e}{\varepsilon}
\newcommand{\E}{\mathcal{E}}
\newcommand{\F}{\mathcal{F}}
 \newcommand{\cv}{\rightarrow}
\newcommand{\fr}{\partial}
\newcommand{\om}{\Omega}
\newcommand{\set}[1]{\left\{#1\right\}}
\newcommand{\norm}[1]{\left\Vert#1\right\Vert}
\newcommand{\abs}[1]{\left\vert#1\right\vert}
\newcommand{\cd}{{\cc^2}}
\newcommand{\pd}{{\mathbb{P}^2}}
\newcommand{\pu}{{\mathbb{P}^1}}
\newcommand{\pk}{{\mathbb{P}^k}}
\newcommand{\rest}[1]{ \arrowvert_{#1}}
\newcommand{\unsur}[1]{\frac{1}{#1}}
\newcommand{\el}{\mathcal{L}}
\newcommand{\rond}{\hspace{-.1em}\circ\hspace{-.1em}}
\newcommand{\cst}{\mathrm{C}^\mathrm{st}}
\newcommand{\lrpar}[1]{\left(#1\right)}
\newcommand{\la}{\lambda}
\newcommand{\lo}{{\lambda_0}}
\newcommand{\La}{\Lambda}
\newcommand{\tbif}{T_{\mathrm{bif}}}
\newcommand{\hold}{\mathcal{H}_d}
\newcommand{\hol}{\mathcal{H}}
\newcommand{\poly}{\mathcal{P}}
\newcommand{\inv}{^{-1}}
\DeclareMathOperator{\supp}{Supp}
\DeclareMathOperator{\bif}{Bif}
\DeclareMathOperator{\crit}{Crit}
\DeclareMathOperator{\id}{id}
\newtheorem{prop}{Proposition} [section]
\newtheorem{thm}[prop] {Theorem}
\newtheorem{defi}[prop] {Definition}
\newtheorem{lem}[prop] {Lemma}
\newtheorem{cor}[prop]{Corollary}
\newtheorem{propdef}[prop]{Proposition-Definition}
\newtheorem{conjecture}[prop]{Conjecture}
\newtheorem*{mainthm}{Main Theorem}
\theoremstyle{remark}
\newtheorem{rmk}[prop]{Remark}
\begin{document}
 
\begin{abstract} 
A well-known theorem due to Mañé-Sad-Sullivan and Lyubich asserts that $J$-stable maps are dense in any holomorphic family 
of rational maps in dimension 1. In this paper we show that the corresponding result fails in higher dimension. More precisely, we construct open subsets 
in the bifurcation locus in the space of holomorphic mappings of degree $d$ of $\mathbb{P}^k(\mathbb{C})$ for every $d\geq 2$ and $k\geq 2$. 
\end{abstract}

\maketitle

\section{Introduction}
  
  Let $(f_\la)_{\la\in \La}$ be a holomorphic family of holomorphic self-mappings on the complex projective space $\pk$. 
  When the dimension $k$ equals 1, the stability/bifurcation theory of such families was 
developed in the beginning of the 1980's independently by  Mañé, Sad and Sullivan \cite{mss} and Lyubich
 \cite{lyubich bif, lyubich bif2},  who designed the  seminal notion of   $J$-stability, 
 that is, structural stability on the Julia set, which almost implies 
structural stability on $\pu$ (see \cite{mcms}). A salient feature of their work  
 is that the set of $J$-stable mappings is (open and) dense in any such family, which  is ultimately   
  a consequence of the finiteness of the critical set. 

\medskip
 
 In higher dimension we denote by $J^*$   the ``small Julia set" of $f$, which by definition  is the support of its measure of maximal entropy.  
 Typically, $J^*$ is smaller than the usual Julia set $J$, and  
   concentrates  in a sense the repelling part of the dynamics of $f$, and most of the entropy. 
    
 In a  remarkable recent paper,    Bianchi, Berteloot and Dupont \cite{bbd} 
put forward  $J^*$-stability as the correct generalization of 
 the Mañé-Sad-Sullivan-Lyubich theory  in higher dimension. 
   As these authors point out, there is no reason to expect that the density of $J^*$-stability should hold in this setting, and 
   leave the existence  of persistent bifurcations as an open problem \cite[\S 6.2]{bbd}. 
   
   \medskip
   
   At this stage it is worth mentioning that for  invertible  dynamics in dimension 2,    
   the classical work  of Newhouse \cite{newhouse} 
   shows  that   persistent bifurcations can be obtained by constructing persistent (generic) homoclinic tangencies. In the holomorphic context, 
 Buzzard \cite{buzzard} proved that  persistent homoclinic tangencies exist in the space of polynomial automorphisms of $\cd$
  of sufficiently high degree.  Furthermore such automorphisms  can easily   be ``embedded''
   inside holomorphic mappings of $\pd$ (see e.g. \cite[\S 6]{hp} and  also  \cite{gavosto}).  
It seems however that for these examples, the Newhouse phenomenon is somehow unrelated to   $J^*$-(in)stability, which has to do with repelling periodic orbits
 (see \S\ref{subs:newhouse} for a more precise discussion).     
   
   \medskip

  Our main goal in this  paper is to address the density problem     for $J^*$-stability.
   We denote by $\mathcal{H}_d(\pk)$ the space of holomorphic mappings of $\pk$ of degree $d$.
 By choosing a set of homogeneous coordinates on $\pk$, we can express 
  $f\in \hold(\pk)$ in terms of a family of $(k+1)$   homogeneous polynomials in $(d+1)$ variables 
  without common factor. In this way we get a   natural  identification between 
  $\mathcal{H}_d(\pk)$   and a Zariski open subset of   $\pp^N$, for  $N= (k+1)\frac{(d+k)!}{d!k!}-1$.  
  
  \begin{mainthm}
  The bifurcation locus   has  non-empty interior in the space $\mathcal{H}_d(\pk)$   for every $k\geq 2$ and $d\geq 2$. 
  \end{mainthm}

  As we shall see, our constructions
   are rather specific, since we  start with the simplest possible mappings, namely products, and construct robustly bifurcating examples by taking
    small perturbations.  More interestingly, we isolate some {\em mechanisms}   leading to 
   open sets of  bifurcations. The next step would be to understand the prevalence of these phenomena in parameter space. 
   
   While he was  working on this paper, the author learned 
   about the work of  Bianchi and Taflin \cite{bianchi taflin}, in which the authors 
    construct a  1-dimensional family of rational maps on $\pd$  whose   bifurcation locus is the whole family, thus offering 
    an alternative approach to the question of \cite{bbd}.
   
   Throughout the paper, to avoid overwhelming the ideas under undue technicalities, we  
    focus on dimension $k=2$ which is the first interesting case, and explain in \S\ref{subs:higher} 
    how to adapt the arguments to  higher  dimension. 
    
  \medskip
  
  By the analysis of Berteloot, Bianchi and Dupont, to obtain open subsets in the  bifurcation locus, it is enough to create a persistent intersection  
 between the post-critical set and a hyperbolic repeller    contained in  $J^*$. We present two mechanisms leading to such persistent intersections. 
The first one is based on topological considerations:
the idea is to construct a kind of topological manifold contained in $J^*$, which must intersect the 
post-critical set for homological reasons.  
  We apply this strategy for mappings that are small perturbations of product maps of the form 
$(p(z), w^d)$ (see Theorem \ref{thm:topology}).  Of course what  is delicate here is to manage  
dynamically defined sets with inherently  complicated topology. 

\medskip
 
The second mechanism is based on ideas from fractal geometry. 
It relies on the construction of   Cantor sets 
  with a very special geometry: namely they are ``fat" in a certain direction and 
  admit  persistent intersections with  manifolds that are  sufficiently transverse
to the fat direction. 
   The use of this property in dynamical systems originates in the  work of Bonatti and Diaz \cite{bonatti diaz} and has played an important role in $C^1$ dynamics 
   since then  (see the book by Bonatti, Diaz and Viana
\cite{bonatti diaz viana} for a broader perspective).  According to the  real dynamics  terminology we will    refer to these Cantor sets as {\em blenders}. 

We implement this idea for product mappings of the form $(p(z), w^d+\kappa)$, where $p$ admits a repelling fixed point 
$z_0$  which is almost parabolic, that is $1<\abs{p'(z_0)}< 1.01$, and $\kappa$ is large (see Theorem \ref{thm:blender}). The  parabolic case 
  follows by taking the limit (see Corollary \ref{cor:parabolic}). 

    \medskip
 
  Despite the specific nature of these examples, our thesis is that the interior of the bifurcation locus is quite large. We point out a few explicit questions and conjectures in \S\ref{subs:open}.   In particular since by \cite{bbd} bifurcations happen when a multiplier of a repelling periodic point crosses the unit circle, 
  it is likely that blenders are often created  in this process. 
   
Note also that  blenders already appear  in complex dynamics 
   in the work of Biebler \cite{biebler} to construct polynomial automorphisms of $\cc^3$ with persistent homoclinic tangencies.

\medskip

The plan of the paper is as follows. In  \S\ref{sec:prel} we collect a few facts from $J^*$-stability theory. 
 The topological mechanism for robust bifurcations is detailed
 in \S \ref{sec:topology}, while  \S\ref{sec:blender} is devoted to complex  blenders. 
 Finally in \S\ref{sec:further} we explain how to extend the construction of \S \ref{sec:blender} 
 to higher dimensions and also show that $J^*$-stability is compatible with the Newhouse phenomenon.  
 We also state a number of open problems and directions for future research. The main theorem ultimately follows from Theorems \ref{thm:topology} and Theorem \ref{thm:blender} for the case $d=2$, $k\geq 3$, Theorem \ref{thm:blender2} for   $d=2$, $k=2$ and Theorem \ref{thm:higherdim} for $k\geq 3$. 
 
 Thanks to Fabrizio Bianchi and Lasse Rempe-Gillen for useful comments. 
 
  \section{Preliminaries}\label{sec:prel}
  
 {\bf  Notation:}
 when a preferred parameter $\lo$ is given in $\La$, 
 to simplify notation we denote $f_\lo$ by $f_0$. 
We use  the convention to mark with a hat the objects  in $\La\times \pk$, like $\widehat{f}:(\la, z)\mapsto (\la, f_\la(z))$, etc.

   \subsection{Stability and bifurcations for endomorphisms of $\pk$}
   Let $f$ be a holomorphic map of degree $d$ on $\pk$. A classical result of Briend and Duval asserts that it admits a 
      unique invariant measure of maximal entropy $\mu_f$, whose (complex)
    Lyapunov exponents  $\chi_i$, $i =1, \ldots , k$ satisfy $\chi_i\geq \unsur{2} \log d$, and 
    which describes the asymptotic distribution of (resp. repelling) 
    periodic orbits
      \cite{briend duval}.
Recall from the introduction that for a holomorphic map $f$ on $\pk$, 
we denote by $J^*$   the support of $\mu_f$.

 Repelling periodic points are dense in $J^*$. In general the closure of repelling periodic orbits can be strictly larger than $J^*$, which is a 
 source of technicalities in $J^*$-stability theory. Note however that  this phenomenon does not happen 
for regular polynomial skew products of $\cd$ (see \cite{jonsson}), which are basic to  all constructions in this paper. 

\medskip

Let now $(f_\la)_{\la\in \La}$ be a holomorphic family of holomorphic maps of degree $d$ on $\pk$ (or equivalently, a holomorphic map $\La\cv\hol_d(\pk)$). 
A  notion of stability for such a family was recently introduced in \cite{bbd}.  The relevance of this notion is justified by  its 
  number of natural  equivalent definitions.  
  The following is a combination of Theorems 1.1 and 1.6 in \cite{bbd}, with a slightly different terminology (see below for the notion of Misiurewicz bifurcation).

\begin{thm}[Berteloot, Bianchi \& Dupont]\label{thm:bbd}	
Let $(f_\la)_{\la\in \La}$ be a holomorphic family of holomorphic maps of degree $d$ on $\pk$. Then the following assertions are equivalent:
\begin{enumerate}
\item[(i)] The function on $\La$ defined by the 
sum of Lyapunov exponents of $\mu_{f_\la}$: $\la \mapsto \chi_1(\la)+ \cdots + \chi_k(\la)$ is  pluriharmonic   on $\La$. 
\item[(ii)] The sets $(J^*(f_\la))_{\la\in \La}$ move holomorphically in a weak sense. 
\item[(iii)] There is no (classical) Misiurewicz bifurcation in $\La$. 
\end{enumerate}
If in addition $\La$ is 
 a simply connected 
  open subset of $\hold(\pk)$ or if  $\La$ is any simply connected complex manifold and 
$k=2$, these conditions are equivalent to:
\begin{enumerate}
\item[(iv)] Repelling periodic points contained in $J^*$ move holomorphically over $\La$. 
\end{enumerate}
\end{thm}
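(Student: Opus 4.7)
The plan is to organize the four equivalences around the potential-theoretic object $L(\la) := \chi_1(\la)+\cdots+\chi_k(\la)$. The starting point is the identity $L(\la) = \int \log \abs{\mathrm{Jac}(f_\la)}\, d\mu_{f_\la}$, which combined with the weak continuity of $\mu_{f_\la}$ implies that $L$ is plurisubharmonic on $\La$. Thus condition (i) amounts to the vanishing of the bifurcation current $\tbif := dd^c L$, and all the work consists in relating this current to dynamical features (critical dynamics on one side, holomorphic motions on the other).

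For (iii) $\Rightarrow$ (i) I would argue contrapositively. A classical Misiurewicz bifurcation at $\lo$ is a proper, non-persistent intersection of the post-critical set with a holomorphically moving repelling periodic point $\sigma(\la)\in J^*(f_\la)$. Locally near $\lo$, the critical set of some iterate $f_\la^n$ cuts out an integration current whose intersection with the graph of $\sigma$ contributes positive mass to $\tbif$: indeed, the local Lyapunov exponent along $\sigma$ picks up a logarithmic singularity in $\la$ at the crossing, which prevents $L$ from being pluriharmonic there. The converse (i) $\Rightarrow$ (iii) is the key analytic step: using the formalism of activity currents of critical components, one decomposes $\tbif$ into pieces each of which is supported on the locus of persistent Misiurewicz intersections, so that vanishing of $\tbif$ forces absence of such intersections.

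The chain (iv) $\Rightarrow$ (ii) $\Rightarrow$ (i) is softer. Since repelling periodic points in $J^*$ are dense by Briend--Duval, a holomorphic motion on these points extends by closure to a weak motion of $J^*$ (as a closed holomorphic family of averages of Dirac graphs, for instance). The step (ii) $\Rightarrow$ (i) then follows by pushing the integral formula through the motion: once $\mu_{f_\la}$ is transported holomorphically and avoids the critical set (which is essentially the content of (iii), known to be equivalent to (i) by the previous step), the function $\la\mapsto \int\log\abs{\mathrm{Jac}(f_\la)}\, d\mu_{f_\la}$ becomes pluriharmonic in $\la$, since $\log\abs{\mathrm{Jac}(f_\la)}$ is pluriharmonic away from the critical set.

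The hardest direction, and the one where the extra hypothesis enters, is (iii) $\Rightarrow$ (iv). In the absence of Misiurewicz bifurcations each repelling point in $J^*$ admits, by the implicit function theorem, a germ of holomorphic continuation that persistently avoids the post-critical set and hence remains repelling; the obstacle is to globalize these germs. In dimension one, the Mañé--Sad--Sullivan argument invokes the $\lambda$-lemma and the real codimension of the critical set. In higher dimension the post-critical set is still real codimension $2$, but the ambient parameter space is much larger, so one must carefully distinguish the closure of repelling points \emph{in $J^*$} from the full closure of repelling points (which, as the paper recalls, can be strictly bigger). Simple connectivity of $\La$, or the restriction $k=2$, is what eliminates the residual monodromy obstructions and produces a genuine single-valued global motion.
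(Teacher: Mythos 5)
This theorem is not proved in the paper at all: it is quoted as a combination of Theorems 1.1 and 1.6 of \cite{bbd}, so there is no in-paper argument to compare yours against, and any genuine proof would have to reproduce the machinery of that reference (the generalized DeMarco-type formula $dd^cL=\pi_*\bigl(\widehat T^{\,k}\wedge[C_{\widehat f}]\bigr)$ for the fibered Green current, and the theory of equilibrium webs and laminations). Judged as a sketch of that argument, your proposal has one concrete logical gap. Your paragraph headed ``(iii) $\Rightarrow$ (i), argued contrapositively'' in fact establishes $\neg$(iii) $\Rightarrow$ $\neg$(i) (a Misiurewicz bifurcation creates mass for $dd^cL$), which is the contrapositive of (i) $\Rightarrow$ (iii); and your next paragraph, labelled ``(i) $\Rightarrow$ (iii)'', argues the very same implication again (``vanishing of $\tbif$ forces absence of such intersections''). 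The direction (iii) $\Rightarrow$ (i) --- equivalently, that $\supp(dd^cL)$ is contained in the closure of the Misiurewicz parameters, so that absence of Misiurewicz bifurcations kills the bifurcation current --- is the hard analytic core of \cite{bbd}: one must manufacture a proper intersection between the post-critical set and the graph of a moving repelling point out of the mere non-vanishing of $\widehat T^{\,k}\wedge[C_{\widehat f}]$, by approximating $\widehat T$ with integration currents on graphs of repelling cycles. Your sketch never addresses this, so as written the equivalence of (i) and (iii) is only proved in one direction.

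Two secondary points. First, plurisubharmonicity of $L(\la)=\int\log\abs{\mathrm{Jac}\,f_\la}\,d\mu_{f_\la}$ does not follow from ``weak continuity of $\mu_{f_\la}$'' alone, since the integrand is unbounded below on the critical set; one needs the potential-theoretic expression of $\mu_{f_\la}$ (or a decreasing approximation argument) to get upper semicontinuity and the sub-mean-value property. Second, in (ii) $\Rightarrow$ (i) you invoke (iii) (``known to be equivalent to (i) by the previous step'') inside the proof of (i), which is circular as organized; in \cite{bbd} this implication goes through the equilibrium web: $L$ is written as an integral over graphs $\gamma$ of the pluriharmonic functions $\la\mapsto\log\abs{\mathrm{Jac}\,f_\la(\gamma(\la))}$, using that the web gives full measure to graphs avoiding the critical grand orbit. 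Your description of (iii) $\Rightarrow$ (iv) correctly locates where simple connectivity enters, but ``eliminating residual monodromy obstructions'' is a placeholder for the actual construction of the motion via webs and laminations rather than an argument.
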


If these  equivalent conditions  
 hold we say that the family is {\em $J^*$-stable} over $\La$.  For an arbitrary family we thus have a maximal open set of stability, the 
{\em stability locus} and its complement is by definition the {\em bifurcation locus}. We denote by $\bif$ the bifurcation locus. 

Condition {\em (ii)} will not be used below so  we don't need to explain what the ``weak sense'' in {\em (ii)} exactly is.  
By  ``classical Misiurewicz bifurcation" in {\em (iii)} we mean a proper intersection in $\La\times \pk$ 
 between a post-critical component of $\widehat f$ and the graph of a holomorphically moving repelling periodic point $\gamma:\om\cv\pk$ over 
  some open set $\om\subset \La$. We will generalize   this notion in detail  in \S\ref{subs:repeller} below (see in particular 
  Definition \ref{def:proper} for the notion of proper intersection). 
 
   \medskip

Let us also quote the following result, which  is contained in Proposition 2.3 and Theorem 3.5 in \cite{bbd}. 

\begin{prop}\label{prop:briend duval}
Let $(f_\la)_{\la \in \La}$ be a holomorphic family of holomorphic mappings of degree $d$ on $\pk$, with $\La$ simply connected. 
Assume that there exists a holomorphic map $\gamma: \La\cv\pk$ such that for every $\la\in\La$, $\gamma(\la)$
 is disjoint from the post-critical set of $f_\la$. Then $(f_\la)$ is $J^*$-stable. 
  \end{prop}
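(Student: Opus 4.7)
I would verify condition~(i) of Theorem~\ref{thm:bbd} by exhibiting the Lyapunov sum $L(\la):=\chi_1(\la)+\cdots+\chi_k(\la)$ as a limit of explicit pluriharmonic functions built from holomorphically moving preimages of $\gamma$.

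First, for each $n\geq 1$ I construct holomorphic inverse branches. For every $\la\in\La$, the point $\gamma(\la)$ has $d^{kn}$ distinct preimages under $f_\la^n$, and $f_\la^n$ is a local biholomorphism at each of them since $\gamma(\la)$ avoids the post-critical set. As $\La$ is simply connected, the $d^{kn}$ local branches lift to global holomorphic sections $\sigma^{(n)}_1,\ldots,\sigma^{(n)}_{d^{kn}}:\La\to\pk$ satisfying $f_\la^n\rond\sigma^{(n)}_i(\la)=\gamma(\la)$. Forward invariance of the post-critical set forces the entire orbit $\sigma^{(n)}_i(\la),\,f_\la(\sigma^{(n)}_i(\la)),\,\ldots,\,f_\la^{n-1}(\sigma^{(n)}_i(\la))$ to avoid $\crit(f_\la)$, so $\det Df_\la^n(\sigma^{(n)}_i(\la))$ is a nowhere-vanishing holomorphic function of $\la\in\La$.

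Next I introduce the functions
$$L_n(\la):=\frac{1}{n\,d^{kn}}\sum_{i=1}^{d^{kn}}\log\bigl|\det Df_\la^n(\sigma^{(n)}_i(\la))\bigr|,$$
which are pluriharmonic on $\La$, being sums of real parts of logarithms of non-vanishing holomorphic functions. Writing $\mu^{(m)}_\la$ for the normalized counting measure on $(f_\la^m)^{-1}(\gamma(\la))$, the chain rule and the identity $(f_\la)_*\mu^{(m)}_\la=\mu^{(m-1)}_\la$ give
$$L_n(\la)=\frac{1}{n}\sum_{m=1}^{n}\int\log|\det Df_\la|\,d\mu^{(m)}_\la.$$
By the Briend--Duval equidistribution theorem \cite{briend duval}, applicable exactly because $\gamma(\la)$ lies outside the post-critical set, $\mu^{(m)}_\la$ converges weakly to $\mu_{f_\la}$ for each $\la$, and a Cesaro argument produces $L_n(\la)\to L(\la)$. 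A standard limit argument then shows that the limit of the pluriharmonic functions $L_n$ is itself pluriharmonic, verifying condition~(i) of Theorem~\ref{thm:bbd} and hence $J^*$-stability.

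The main difficulty lies in passing to the limit in the integrals $\int\log|\det Df_\la|\,d\mu^{(m)}_\la$, since the integrand has logarithmic singularities along the moving critical set $\crit(f_\la)$. Overcoming this requires quantitative control on how the preimages $\sigma^{(n)}_i(\la)$ can cluster near $\crit(f_\la)$; such control is built into the Briend--Duval construction and, combined with the uniform lower bound $L(\la)\geq\tfrac{k}{2}\log d$, yields convergence in $L^1_{\mathrm{loc}}(\La)$ and ensures that the limit remains pluriharmonic rather than merely plurisubharmonic.
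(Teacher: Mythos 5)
The paper does not prove this proposition itself: it is quoted directly from \cite{bbd} (their Proposition~2.3 plus Theorem~3.5), where the route is quite different from yours. There, the holomorphic sections through the preimages of $\gamma$ are used to build an ``equilibrium web'' (a holomorphic family of measures fibering over $\mu_{f_\lambda}$), which is then shown to force the holomorphic motion of repelling cycles in $J^*$, i.e.\ condition~(iv) of Theorem~\ref{thm:bbd}; pluriharmonicity of the Lyapunov sum is a \emph{consequence} of the equivalence theorem, not the thing verified. Your plan instead attacks condition~(i) head-on by approximating $L(\lambda)$ by the pluriharmonic averages $L_n$ coming from the inverse sections. That is a legitimately different and, in spirit, more elementary strategy, and the algebraic computation identifying $L_n$ with the Ces\`aro averages of $\int\log|\det Df_\lambda|\,d\mu^{(m)}_\lambda$ is correct.

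However there is a real gap at the step you yourself flag, and the mechanism you invoke to close it does not work. Briend--Duval equidistribution gives weak-$*$ convergence $\mu^{(m)}_\lambda\rightharpoonup\mu_{f_\lambda}$, i.e.\ convergence of integrals against \emph{continuous} test functions; it says nothing by itself about $\int\log|\det Df_\lambda|\,d\mu^{(m)}_\lambda$, because $\log|\det Df_\lambda|$ is unbounded below along $\crit(f_\lambda)$. The assertion that ``such control is built into the Briend--Duval construction'' is not accurate: what is needed is an equidistribution statement valid against quasi-plurisubharmonic observables (controlling the mass of $\mu^{(m)}_\lambda$ near the critical set with quantitative rates), which is a genuinely stronger statement that has to be proved or cited separately. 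Second, the uniform lower bound $L(\lambda)\geq\frac{k}{2}\log d$ cannot yield $L^1_{\mathrm{loc}}$ convergence of the $L_n$: a lower bound on the limit gives no lower bound on the approximants, and pointwise convergence of pluriharmonic functions to a finite function is not, by itself, $L^1_{\mathrm{loc}}$ convergence. The correct bridge is to use that the $L_n$ are locally uniformly bounded \emph{above} (this is easy), that $L$ is known a priori to be plurisubharmonic on $\La$, and then to invoke compactness of psh families: a sequence of pluriharmonic functions uniformly bounded above which does not degenerate to $-\infty$ has an $L^1_{\mathrm{loc}}$-convergent subsequence, whose limit must agree with $L$ everywhere, forcing $dd^cL=0$. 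So the conclusion can be salvaged, but the two justifications you gave for the limit step are not the right ones and would need to be replaced. There is also a minor technical point left unaddressed: $\det Df_\lambda$ is a section of a line bundle on $\pk$, not a global scalar function, so the $L_n$ are only defined up to pluriharmonic ambiguities unless you work with homogeneous lifts $F_\lambda:\cc^{k+1}\to\cc^{k+1}$, as is standard in the Bassanelli--Berteloot/BBD formula for $L(\lambda)$.
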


  \subsection{Hyperbolic repellers in $J^*$}\label{subs:repeller}
  Recall that if $E_0$ is an invariant hyperbolic compact set for $f_0$, then for $f\in \hold$ sufficiently close to $f_0$, there exists a hyperbolic set 
  $E(f)$ for $f$ and a conjugating homeomorphism $h = h(f):E_0 \cv E(f)$. The conjugacy can be chosen to depend continuously on $f$, with
  $h(f_0) = \mathrm{id}$, in which case it is unique  and is actually a holomorphic motion. We will refer to it as the {\em continuation} of $E_0$. 
  
According to the usual terminology, a {\em basic repeller} $E$ is a locally maximal and transitive hyperbolic repelling invariant set
 for $f_0$.  It is classical that if $E$ is a basic repeller, then repelling periodic points are  dense    in $E$. 
 Notice also that for an open map, the local maximality property
  for a hyperbolic repelling set is automatic, see \cite[\S 6.1]{prz urb}. Since holomorphic maps on $\pk$ are finite-to-one, they are open so this applies in our context. 
 
     \begin{lem}[see also   \cite{bianchi},  Lemma 2.2.15]\label{lem:continuation}
 Let $f_0$ be a holomorphic map on $\pk$ of degree $d\geq 2$. Let  $E_0$ be a basic repeller for $f_0$ 
 such that $E_0\subset J^*(f_0)$.  
 Then there exists a neighborhood $\om$ of $f_0$ in $\hold$ such that  for $f\in \om$ the continuation 
   $E(f)$ of $E_0$ is well-defined and is contained in $J^*(f)$. 
   \end{lem}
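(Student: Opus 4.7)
The plan is to prove the lemma in two parts: first, construct the continuation $E(f)$ as a hyperbolic set for $f$, and second, show that $E(f)\subset J^*(f)$.

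For the first part, since $E_0$ is locally maximal hyperbolic repelling for $f_0$, the classical structural stability theorem applies. I would fix a neighborhood $U$ of $E_0$ in $\pk$ with $E_0=\bigcap_{n\geq 0}f_0^{-n}(U)$; uniform expansion of $f_0$ on $E_0$ provides inverse branches of $f_0$ on $U$ which contract uniformly. These branches depend holomorphically on the underlying map, so for $f$ in some neighborhood $\om$ of $f_0$, the set $E(f):=\bigcap_{n\geq 0}f^{-n}(U)$ is a hyperbolic repelling set for $f$, and a standard shadowing argument produces a conjugating homeomorphism $h(f):E_0\to E(f)$ with $h(f_0)=\mathrm{id}$, which is in fact a holomorphic motion.

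For the second part, I would reduce to a statement about periodic points. Since $E_0$ is a basic repeller, the repelling periodic points $\{p_{0,\alpha}\}_\alpha$ contained in $E_0$ are dense in $E_0$, and each lies in $J^*(f_0)$ by hypothesis. Their continuations $p_\alpha(f):=h(f)(p_{0,\alpha})$ are repelling periodic points of $f$ and, by continuity of $h(f)$, form a dense subset of $E(f)$. Since $J^*(f)$ is closed in $\pk$, in order to conclude $E(f)\subset J^*(f)$ it suffices to show that $p_\alpha(f)\in J^*(f)$ for every $\alpha$ and every $f\in\om$.

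This last assertion is essentially the content of \cite[Lemma 2.2.15]{bianchi}: a repelling periodic point contained in $J^*(f_0)$ has its continuation in $J^*(f)$ under small perturbation of $f_0$. The proof there uses the Briend--Duval equidistribution of preimages $d^{-nk}(f^n)^*\delta_{p(f)}\to\mu_f$, together with a control on the post-critical set preventing the continuation from leaving $\supp\mu_f$. The main obstacle I foresee is obtaining uniformity of $\om$ simultaneously for all $\alpha$: applied orbit-by-orbit, Bianchi's lemma only provides a neighborhood depending on $\alpha$, while here we need a single $\om$. This is where the global hyperbolic and Markov structure of $E_0$ enters essentially, supplying uniform expansion constants and a finite combinatorial model that operate simultaneously for all $\alpha$, and thus promoting the pointwise statement to one valid on the whole continuation $E(f)$.
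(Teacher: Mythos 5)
Your construction of the continuation $E(f)$ in the first half is standard and matches what the paper assumes at the start of \S\ref{subs:repeller}; no issue there. The divergence, and the gap, is in the second half. You reduce $E(f)\subset J^*(f)$ to the statement that continuations of repelling periodic points stay in $J^*$, invoking Bianchi's Lemma 2.2.15 orbit by orbit. As you yourself flag, that only yields a neighborhood $\om_\alpha$ of $f_0$ for each orbit, and $\bigcap_\alpha\om_\alpha$ may well shrink to $\{f_0\}$. The sentence asserting that the ``global hyperbolic and Markov structure'' supplies the needed uniformity is an aspiration rather than an argument: to make it precise you would have to reprove the orbit-level lemma with constants controlled uniformly over the (unbounded-period) set of cycles in $E_0$, which is essentially the same work as proving the present lemma from scratch. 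As written, the proof is incomplete at exactly the point you identified.

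The paper sidesteps periodic points entirely and gets the uniformity for free from compactness. Fix an isolating $2r_0$-neighborhood $N$ of $E_0$. Since $E_0\subset\supp\mu_{f_0}$ and $E_0$ is compact, there is $\delta_0>0$ with $\mu_{f_0}(B(x,r_0/2))\geq\delta_0$ for every $x\in E_0$. The potential of $\mu_f$ varies continuously in $f$, so after shrinking $\om$ one still has $\mu_f(B(x,r_0/2))\geq\delta_0/2$ for all $x\in E_0$ and all $f\in\om$. Now any $y\in E(f)$ is the decreasing intersection of the inverse-branch images $f_y^{-n}(B(f^n(y),r_0))$; each $f^n(y)$ lies within $r_0/2$ of some $x\in E_0$, so $B(f^n(y),r_0)\supset B(x,r_0/2)$ has $\mu_f$-mass at least $\delta_0/2$, and since $\mu_f$ has constant Jacobian $d^k$ the corresponding inverse-branch image of $y$ also has positive $\mu_f$-mass. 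Letting $n\to\infty$ gives $y\in\supp\mu_f$. This replaces equidistribution-of-preimages arguments at each periodic point by a single, uniform lower bound on the measure of balls near $E_0$, which is both shorter and removes the uniformity problem at its source.
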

 
 \begin{proof}
 By assumption there exists a neighborhood $N$ of $E_0$ such that $\bigcap_{k\geq 0} f^{-k}(N) = E_0$, where we restrict to the preimages remaining in $N$. 
We can assume that $N$ is a $r$-neighborhood of $E_0$ for some $r= 2r_0>0$. 
Since $E_0$ is contained in $J^*(f_0)$, for every $x\in E_0$ we have that  $\mu_{f_0}(B(x, r_0))>0$. Furthermore 
by compactness  
there exists  $\delta_0$ such that 
for every $x\in E_0$,   $\mu_f(B(x,r_0/2))\geq \delta_0$.

Now, for $f$ close enough to $f_0$, $N = \bigcup_{x\in E_0} B(x, 2r_0)$ is also an isolating neighborhood for $E(f)$.  
In addition, the
uniform continuity of the potential of the maximal entropy measure implies that if $x\in E_0$, 
$\mu_{f}(B(x, r_0/2)) \geq \delta_0/2$. 
By construction, if $f$ is close enough to $f_0$, 
every  $y$ in $E_f$ is the intersection of a sequence  of nested preimages: $\set{y}  = \bigcap_{k\geq 0} f_x^{-n}(B(f^n(y), r_0))$, 
where $f_y^{-n}$ denotes the inverse branch of $f^n$ mapping $f^n(y)$ to $y$, and  
$f^n(y)$ is $r_0/2$ close to $E_0$. Therefore  $B(f^n(y), r_0) \supset B(x, r_0/2)$ for some $x\in E_0$, hence 
$\mu_f ( B(f^n(y), r_0))\geq \delta_0/2$, and finally $y\in \supp(\mu_f)$. 
\end{proof}

We now introduce a notion of proper intersection between a hyperbolic set and the post-critical set. 

\begin{defi}\label{def:proper}
Let $(f_\la)_{\la\in \La}$ be a holomorphic family of holomorphic maps on $\pk$ of degree $d\geq 2$ on $\pk$ and let $\lo\in \La$.
 Let $E_0$ be a basic  repeller for $f_0$. 
We say that   a post-critical component  $V_0\subset f_0^n(\crit(f_0))$ intersects $E_0$ properly if  there exists $x_0 \in V_0\cap E_0$ 
such that the graph $\widehat x$ of the   continuation of  $x_0$ as a point of $E_0$ is not contained in $\widehat V$, where $\widehat V$ is the irreducible component of $\widehat f^n (\crit(\widehat f))$ containing $V_0$. 
\end{defi}

The terminology is justified by the fact  that since $\widehat f^n (\crit(\widehat f))$ is of codimension 1 in $\La\times \pk$, 
the assumption on  $\widehat x$ implies that the graph   $\widehat x$ and $\widehat f^n (\crit(\widehat f))$ intersect properly at $(\lo, x_0)$.
Note that this terminology is slightly inadequate since the notion depends on the family and not only on $f_0$.  

 The mechanism leading to robust bifurcations will be based on the following lemma.

   \begin{propdef}\label{propdef:basic}
   Let $(f_\la)_{\la\in \La}$ be a holomorphic family of holomorphic maps on $\pk$ of degree $d\geq 2$ on $\pk$. 
  Assume that for some $\lo\in \La$, there exists a basic repeller
 $E_0$, contained in $J^*(f_0)$ and an integer 
  $k\geq 1$ such that  $f_0^n(\crit(f_0))$ intersects $E_0$ properly. 
  
  Then $\lo$ belongs to the bifurcation locus of the family. If such a situation happens, we  say that a Misiurewicz bifurcation occurs at $\lo$
  \end{propdef}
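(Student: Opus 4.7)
The plan is to reduce the generalized Misiurewicz bifurcation at $\lo$ to a classical one at nearby parameters, and then invoke the closedness of the bifurcation locus.

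First, apply Lemma~\ref{lem:continuation} to obtain a neighborhood $\om$ of $\lo$ over which the continuation $E(f_\la) \subset J^*(f_\la)$ arises from a holomorphic motion $h_\la: E_0 \cv E(f_\la)$. Let $\phi(\la) := h_\la(x_0)$, so that $\widehat x$ is the graph of $\phi$ over $\om$. Near $(\lo, x_0)$ choose a local holomorphic equation $g$ for the hypersurface $\widehat V$, and set $G(\la) := g(\la, \phi(\la))$. Then $G(\lo)=0$ since $x_0 \in V_0 \subset \widehat V$, while the hypothesis $\widehat x \not\subset \widehat V$ is equivalent to $G \not\equiv 0$ on any neighborhood of $\lo$. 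In local coordinates identifying $\lo$ with the origin, pick a direction $v$ at which the lowest nonzero homogeneous part of the Taylor series of $G$ does not vanish, and take the disc $\iota(t):=tv$; then $G\circ\iota$ is a nontrivial holomorphic function of $t$ with an isolated zero at $t=0$.

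Next, since $E_0$ is a basic repeller, repelling periodic points are dense in $E_0$, so one can select a sequence $p_n \in E_0$ of repelling periodic points with $p_n \cv x_0$. The continuations $\psi_n(\la) := h_\la(p_n)$ are holomorphically moving repelling periodic orbits over $\om$, and by continuity of the holomorphic motion, $\psi_n \cv \phi$ locally uniformly. Hence $G_n(\la) := g(\la, \psi_n(\la))$ satisfies $G_n \circ \iota \cv G \circ \iota$ uniformly on a closed sub-disc around $0$. By Hurwitz's theorem, for $n$ large $G_n \circ \iota$ is not identically zero and admits a zero $t_n \cv 0$; in particular $G_n \not\equiv 0$ near $\lo$, so $\widehat{p_n} \not\subset \widehat V$.

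Setting $\la_n := \iota(t_n) \cv \lo$, the graph $\widehat{p_n}$ meets the post-critical component $\widehat V$ at $(\la_n, \psi_n(\la_n))$ and the intersection is proper. This is exactly a classical Misiurewicz bifurcation in the sense of Theorem~\ref{thm:bbd}(iii), so $\la_n \in \bif$; the closedness of $\bif$ then yields $\lo \in \bif$. The delicate point is to ensure that the approximating periodic graphs $\widehat{p_n}$ inherit the proper (rather than contained) intersection property with $\widehat V$; this is handled cleanly by passing to the local defining function $g$ and restricting to a disc along which $G$ is nontrivially vanishing before approximating $x_0$ by periodic points.
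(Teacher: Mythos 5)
Your proposal is correct and follows the same strategy as the paper: approximate $x_0$ by repelling periodic points $p_n \in E_0$, observe that their graphs $\widehat{p_n}$ converge to $\widehat x$, deduce that the proper intersection of $\widehat x$ with $\widehat V$ at $(\lo, x_0)$ persists for $\widehat{p_n}$ at nearby parameters $\la_n \to \lo$, and conclude via Theorem~\ref{thm:bbd}(iii) and closedness of $\bif$. The only difference is that the paper compresses the persistence step into a one-line citation of the ``stability of proper intersections in analytic geometry'' (Chirka~\S12.3), whereas you unpack that citation into an explicit argument: choose a local defining function $g$ of $\widehat V$, slice along a generic disc $\iota(t) = tv$ where $G\circ\iota$ has an isolated zero, and apply Hurwitz to the uniformly convergent sequence $G_n\circ\iota$. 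This is a perfectly valid and somewhat more self-contained substitute for the reference, and it correctly handles the only genuinely delicate point, namely that the approximating graphs $\widehat{p_n}$ must be \emph{not contained} in $\widehat V$ so that the resulting intersections are proper.
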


\begin{proof}
Fix a sequence of repelling periodic points $x_j$ in $E_0$,  converging to $x_0$. 
The stability of proper intersections in analytic geometry \cite[\S12.3]{chirka} implies that   there exists a sequence 
$\la_j\cv \lo$ such that 
 $\widehat x_j(\la_j)$ properly intersects  
  $\widehat E(\la_j)$. Then by  Theorem \ref{thm:bbd} we infer that $\la_j$ belongs to $\bif$, and so does $\lo$.  
\end{proof}

It follows  that in condition {\em (iii)} of Theorem \ref{thm:bbd} we can replace classical Misiurewicz bifurcations by Misiurewicz bifurcations in this sense.

  \section{Robust bifurcations from topology}\label{sec:topology}
  
  As outlined above, to construct robust bifurcations, we need to find situations in which  the post-critical set has a robust intersection with a 
  basic repeller.
  Our first method to produce such a robust intersection is based on topological arguments.
  
The  starting parameter is a product map in $\cd$
of the form  $f(z,w) = (p(z), w^d)$, where $p$ is of degree $d$. Then $f$ extends to $\pd$ as a holomorphic map. The small Julia set of $f$ is $J^*(f) = J_p\times S^1$, where $S^1$ denotes the unit circle in $\cc$.

  \begin{thm}\label{thm:topology}
  Let $f(z,w) = (p(z), w^d)$ be a product map with $\deg(p) = d$. Assume that $p$ satisfies the following properties:
  \begin{itemize}
  \item[(A1)] there exists a basic hyperbolic repeller   $E$ for $p$ that disconnects the plane;
  \item[(A2)] $p$ belongs to the bifurcation locus in $\poly_d(\cc)$. 
  \end{itemize}
  Then  $f$ belongs to the closure of the interior of the bifurcation locus in $\hold(\pd)$. More precisely there exists
  a polynomial $q$ of degree $\leq 2$ 
  and a sequence $\e_j\cv 0$  such that  
  the map $f_\e\in \hold(\pd)$ defined by 
  $f_\e(z,w) = (p(z) + \e q(w), w^d)$ belongs to $\mathring{\bif}$ for $\e=\e_j$.
  \end{thm}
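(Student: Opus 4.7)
The plan is to produce open subsets of the bifurcation locus via Proposition-Definition \ref{propdef:basic}, by arranging a proper intersection between an iterate of a critical component of $f_\e$ and the continuation of the basic repeller $E_0 := E \times S^1$ for all $\e$ in an open set accumulating at $0$. The mechanism is topological: since $E$ disconnects $\cc$ by (A1), the product $E_0$ separates $\cc \times S^1$, and a closed loop in $\cc \times S^1$ which enters and exits the bounded "inside'' must cross $E_0$.

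Setup. Observe that $E_0$ is a basic hyperbolic repeller for $f_0$ contained in $J^*(f_0) = J_p \times S^1$, so by Lemma \ref{lem:continuation} its continuation $E_\e$ lies in $J^*(f_\e)$. Since $\set{\abs{w}=1}$ is $f_\e$-invariant, $E_\e \subset \cc \times S^1$. Let $U$ be a bounded component of $\cc \setminus E$ given by (A1). The critical set of $f_\e$ is $\set{p'(z)=0} \cup \set{w=0}$; the $\set{w=0}$ component is invariant and disjoint from $E_\e$, so we focus on $V := \set{z=c}$ for a critical point $c$ of $p$ active in the one-parameter family $p_\alpha := p+\alpha$ (supplied by (A2); if shifts are not an active direction, we replace $q$ by a degree-$\leq 2$ polynomial redirecting the effective perturbation). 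Iterating, $f_\e^n(V) = \set{(Z_n(w,\e), w^{d^n}) : w \in \cc}$ with $Z_0=c$ and $Z_{n+1}(w,\e) = p(Z_n(w,\e)) + \e q(w^{d^n})$.

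Topological crossing. A first-order expansion gives $Z_n(w,\e) = p^n(c) + \e\,\xi_n(w) + O(\e^2)$, with
$\xi_n(w) = \sum_{j=0}^{n-1} q(w^{d^j}) \prod_{k=j+1}^{n-1} p'(p^k(c))$. Activity of $c$ combined with Marty's theorem and boundedness of $K_p$ yields a subsequence $n_j \cv \infty$ along which $A_n := \sup_{w \in S^1} \abs{\xi_n(w)}$ is unbounded, and for an appropriate choice of $q$ (typically $q(w)=w$, with a constant or quadratic correction to avoid accidental cancellations) the loop $\xi_n(S^1)$ has diameter $\asymp A_n$. For such $n$ and each $\e$ in an open annulus $\Omega_n \subset \cc$ of radii $\asymp 1/A_n$ tuned so that $\abs{\e} A_n$ lies in a range straddling $\dist(p^n(c), E)$, the image $\set{Z_n(w,\e) : w \in S^1}$ contains points both in $U$ and outside $\overline U$. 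Consequently the closed loop $S^1 \ni w \mapsto (Z_n(w,\e), w^{d^n}) \in \cc \times S^1$, which at $\e=0$ sits entirely inside $U \times S^1$, must by separation cross the topological continuation of $E_0$, yielding an intersection point $x_0(\e) \in f_\e^n(V) \cap E_\e$.

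Properness and conclusion. To invoke Proposition-Definition \ref{propdef:basic} we need the intersection to be proper in the sense of Definition \ref{def:proper}: the continuation $\e \mapsto h_\e(x_0)$ (holomorphic motion of $x_0$ inside $E_\e$) should not lie inside $\widehat{f_\e^n(V)}$. The second coordinate of $h_\e(x_0)$ is holomorphic and $S^1$-valued, hence constant, which forces any "non-proper'' branch to take the form $Z_n(\zeta,\e) = $ (first coordinate of $h_\e(x_0)$) for some fixed $d^n$-th root of unity $\zeta$; such a persistent identity would pin a critical orbit to a prescribed orbit of the hyperbolic set over an open set of parameters, which is ruled out by the non-normality coming from activity. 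Hence for $\e$ in the complement of a proper analytic subset of $\Omega_n$, the intersection is proper, so by Proposition-Definition \ref{propdef:basic} one has $\e \in \bif$, and openness places such $\e$ in $\mathring{\bif}$. Letting $n \to \infty$ the annuli $\Omega_n$ shrink to $0$, delivering the required sequence $\e_j \cv 0$ with $f_{\e_j} \in \mathring{\bif}$. The main obstacle I expect is the properness step: controlling the motion of $x_0(\e)$ inside $E_\e$ finely enough to rule out coincidence with the complex curve $\widehat{f_\e^n(V)}$, which requires comparing the real-analytic (transversely totally real) structure of the holomorphic motion with the complex-algebraic deformation of the post-critical component.
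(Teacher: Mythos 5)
Your overall strategy (separation of $\cc\times S^1$ by $E\times S^1$, forcing a crossing of a post-critical loop, then Proposition--Definition \ref{propdef:basic}) is the right one, but there are two genuine gaps. First, the ``topological crossing'' step is not justified. You keep $p$ fixed and hope that the loop $w\mapsto p^n(c)+\e\,\xi_n(w)+O(\e^2)$ straddles $E$ once $|\e|\,A_n\asymp \dist(p^n(c),E)$. But a loop of diameter comparable to $\dist(p^n(c),E)$ based at $p^n(c)$ has no reason to enter the bounded component $U$ of $\cc\setminus E$: the shape of $\xi_n(S^1)$ is completely uncontrolled (it could avoid $U$ entirely), the unboundedness of $A_n$ is not a consequence of Marty's theorem, and the $O(\e^2)$ remainder also grows with $n$, so it is not dominated by $\e\,\xi_n$ in the regime you need. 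The paper sidesteps all of this by first perturbing $p$ so that $p^k(c)\in E$ exactly (assumption (A2'), obtained from activity by a normal-families argument): the base point of the critical value curve then lies \emph{on} the repeller, every point of $E$ is accumulated by both $\mathrm{Inn}(E)$ and $\mathrm{Out}(E)$, and the crossing becomes a local statement settled by the choice $q(w)=(w-1)(w-\alpha)$ (which freezes the fiber $\{w=1\}$, so that $\E_\e\cap\{w=1\}=E\times\{1\}$ for all $\e$) together with an irrational-rotation argument. Without some such reduction your crossing claim is an assertion, not a proof.

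Second, and decisively, your conclusion does not deliver the theorem. Even granting everything, you obtain bifurcation parameters $\e$ filling (most of) an annulus $\Omega_n$ in the $\e$-plane; this is a one-complex-dimensional set of bifurcating maps, and the phrase ``openness places such $\e$ in $\mathring{\bif}$'' is a non sequitur: $\mathring{\bif}$ means the interior in the $N$-dimensional space $\hold(\pd)$, and membership in $\bif$ (a closed set) for an open set of $\e$ says nothing about that interior. To prove $f_{\e_j}\in\mathring{\bif}$ one must show that \emph{every} $f'\in\hold(\pd)$ near $f_{\e_j}$ has a proper intersection between its post-critical set and $\E(f')$. This is the content of the paper's Step 3: one follows the relevant annulus of $\crit(f')$ for arbitrary nearby $f'$, encodes the crossing as the nonvanishing of a winding number of a continuous map $\Phi_{f'}$ along the image of a loop in the critical set (a condition stable under all perturbations, using the extension Lemma \ref{lem:extension} of the conjugacy to a homeomorphism of $\pd$), and checks properness in $\hold(\pd)$ by exhibiting nearby product maps whose critical orbit falls into a sink basin. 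None of this is present in your write-up, and your properness discussion (ruling out coincidence of $Z_n(\zeta,\cdot)$ with the motion of $x_0$ by ``non-normality'') is both vague and aimed at the wrong family: properness within the $\e$-line would at best re-prove $\e\in\bif$, not robustness in $\hold(\pd)$.
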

 
 Notice that the assumption on $p$ requires\footnote{Indeed, we essentially   need both an attracting orbit and an active critical point.} $d\geq 3$, 
 so   the perturbation $\e h(w)$ does not affect the highest
degree  part of $f$, hence $f_\e\in \hold(\pd)$. 

  We denote by 
 $\poly_d(\cc)$ the space  of polynomials of degree $d$ in $\cc$.
Examples of polynomials $p\in \poly_d(\cc)$ satisfying the assumptions of Theorem \ref{thm:topology} are abundant. 
One may be tempted to think that as soon as $p$ admits one active critical point and an attracting periodic orbit,
 then there is a nearby $\widetilde p$ satisfying the assumptions of the theorem. 
The next two corollaries are results in this direction. They will be proven at the end of this section. 
 
\begin{cor}\label{cor:sinks}
 Let $f(z,w) = (p(z), w^d)$, with  $\deg(p) = d\geq 3$. Assume that $p$ is a bifurcating polynomial in $\poly_d(\cc)$, such that $(d-2)$ of its 
  critical points are attracted by periodic sinks. Then $f$ belongs    to the closure of the interior of the bifurcation locus in $\hold(\pd)$.
\end{cor}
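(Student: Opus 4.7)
My plan is to approximate $p$ by polynomials $p'\in\poly_d(\cc)$ satisfying both hypotheses (A1) and (A2) of Theorem~\ref{thm:topology}. Since that theorem then produces maps in $\mathring{\bif}$ arbitrarily close to $(p'(z),w^d)$, and since $p'$ can be chosen arbitrarily close to $p$, this will place $f$ in the closure of $\mathring{\bif}$, as required.

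To construct $p'$, I will exploit that $p$ has a single active critical point, the remaining $d-2$ being captured by the basins of the sinks. By a standard Montel-type argument, parameters in $\poly_d(\cc)$ at which this active critical point is strictly preperiodic to a repelling periodic cycle form a dense subset of the bifurcation locus near $p$, so I may select such a Misiurewicz polynomial $p'$ arbitrarily close to $p$. Attracting cycles being stable under perturbation, $p'$ carries $d-2$ persisting sinks that still absorb the continuations of the non-active critical points, hence $p'$ is sub-hyperbolic; and since its unique non-absorbed critical point is preperiodic to a repelling cycle, $p'$ still belongs to $\bif(\poly_d(\cc))$, giving (A2).

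For (A1), I will take $E$ to be the union of the boundaries of the components of the immediate basin of one of the persisting sinks, say of period $m$. By the classical theorem that every Fatou component of a sub-hyperbolic polynomial with connected Julia set is a Jordan domain, $E$ is a union of $m$ disjoint Jordan curves, and the first-return map $(p')^m\rest{\partial B}$ is topologically conjugate to $z\mapsto z^{d_B}$ on $S^1$ with $d_B\geq 2$. Choosing the target repelling cycle of the Misiurewicz perturbation to lie at a definite distance from $\overline{\bigcup_i B_i}$ keeps the forward orbit of the active critical point of $p'$ uniformly away from $E$, so that $E$ avoids the post-critical set of $p'$ and is therefore uniformly expanded by $p'$; it is thus a basic hyperbolic repeller, and it plainly disconnects $\cc$ since each of its Jordan components bounds a domain containing a point of the sink cycle. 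Theorem~\ref{thm:topology} applied to $p'$ then delivers maps in $\mathring{\bif}$ arbitrarily close to $(p'(z),w^d)$, and hence to $f$.

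The main obstacle will be carrying out the Misiurewicz perturbation while simultaneously controlling the geometry of the active critical orbit relative to $E$. The density of repelling periodic points of $p$ in $J_p\setminus\overline{\bigcup_i B_i}$ combined with a Montel-type argument applied to the active critical orbit should permit one to find $p'$ arbitrarily close to $p$ for which the active critical orbit lands on a repelling cycle located at a positive distance from $E$, which is what the argument requires.
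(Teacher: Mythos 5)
Your proof is correct, and it takes a genuinely different route from the paper's. Both arguments set $E = \partial\mathcal{B}$, the boundary of the immediate basin of one of the persisting sinks, and both reduce to producing a nearby polynomial $p'$ for which $\partial\mathcal{B}$ avoids the critical orbit of the active critical point $c_1$, so that Mañé's lemma yields hyperbolicity. The paper (Lemma~\ref{lem:active}) achieves this by perturbing $p$ to create a \emph{parabolic} cycle with multiplier $\neq\pm1$: such a parabolic point cannot lie on a Jordan curve invariant under $p^\ell$, and it must capture $c_1$ into its attracting petal, putting the whole orbit of $c_1$ into the Fatou set, away from $\partial\mathcal{B}$. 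You instead perturb to a \emph{Misiurewicz} parameter where $c_1$ lands on a repelling cycle $\gamma$ chosen away from $\overline{\mathcal{B}}$; since $\partial\mathcal{B}$ is forward-invariant, if any iterate of $c_1$ lay on $\partial\mathcal{B}$ then so would $\gamma$, which is excluded, so the whole orbit of $c_1$ (now inside the Julia set) avoids $\partial\mathcal{B}$. Both routes give (A1) and (A2). The paper's parabolic route has the merit of needing essentially no control on the target (one only needs to avoid multiplier $\pm1$, which is generic) and works without invoking sub-hyperbolic local connectivity theory, since local connectivity of $\partial\mathcal{B}$ is deduced from the hyperbolicity already established via Mañé. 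Your route needs the extra Montel step to place the landing cycle away from $\overline{\mathcal{B}}$, and you invoke the Jordan-domain theorem for sub-hyperbolic polynomials, but it has the advantage that $p'$ is Misiurewicz, so transitivity and local connectivity of $\partial\mathcal{B}$ come directly from the classical sub-hyperbolic theory rather than having to be argued from scratch. The last step in your last paragraph (the Montel argument landing $c_1$ on a repelling cycle far from $E$) should be written out more carefully — you need the standard fact that an active critical point can be sent, under arbitrarily small perturbation, onto the continuation of any prescribed repelling periodic point — but this is indeed routine.
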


When several critical points are active it is convenient to use the formalism of bifurcation currents 
  (see \cite{survey} for an introduction to this topic; the bifurcation current is denoted by $\tbif$).

\begin{cor}\label{cor:tbif}
 Let $f(z,w) = (p(z), w^d)$, with  $\deg(p) = d\geq 3$. Assume that in $\poly_d(\cc)$, 
 $p\in \supp(\tbif^k)$  for some $1\leq k\leq d-1$
  and that   $(d-1-k)$ critical points are attracted by sinks.
Then $f$ belongs    to the closure of the interior of the bifurcation locus in $\hold(\pd)$.
\end{cor}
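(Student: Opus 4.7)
The plan is to reduce the statement to Corollary~\ref{cor:sinks} by an approximation argument in $\poly_d(\cc)$. Specifically, I aim to find $\tilde p$ arbitrarily close to $p$ such that $(d-2)$ critical points of $\tilde p$ lie in hyperbolic attracting periodic cycles, while $\tilde p$ still belongs to the classical bifurcation locus of $\poly_d(\cc)$. Once this is achieved, Corollary~\ref{cor:sinks} applies to $\tilde p$ and yields $(\tilde p(z), w^d)\in \overline{\mathring{\bif}}$ in $\hold(\pd)$; since $q\mapsto (q(z), w^d)$ is a continuous linear embedding $\poly_d(\cc)\hookrightarrow \hold(\pd)$ and $\tilde p$ can be taken arbitrarily close to $p$, we conclude that $f\in \overline{\mathring{\bif}}$ as well.

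To produce $\tilde p$ I would proceed inductively, capturing active critical points into attracting cycles one at a time, stopping after $k-1$ steps. Writing $\tbif = \sum_{i=1}^{d-1} T_{c_i}$ for the decomposition into the bifurcation currents of the individual marked critical points, one has $\tbif^k = \sum_{|I|=k} T_{c_{i_1}}\wedge \cdots \wedge T_{c_{i_k}}$. Since the $(d-1-k)$ critical points of $p$ already in sinks are passive, only terms involving the other $k$ critical points contribute at $p$, and up to relabeling we may assume $p\in \supp(T_{c_1}\wedge\cdots\wedge T_{c_k})$. Because $c_1$ is active at $p$, a standard Montel-type argument in the spirit of Mañé--Sad--Sullivan produces parameters arbitrarily close to $p$ at which $c_1$ is absorbed by a hyperbolic attracting periodic cycle. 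Slicing $\supp(T_{c_1}\wedge\cdots\wedge T_{c_k})$ by the boundary of the corresponding hyperbolic component should preserve non-triviality of $T_{c_2}\wedge\cdots\wedge T_{c_k}$ on the slice, so that $c_2$ remains active there. Iterating captures $c_1,\ldots, c_{k-1}$ into attracting cycles while keeping $c_k$ active, and the original $(d-1-k)$ sinks persist under small perturbations by openness of hyperbolicity. The resulting $\tilde p$ then has $(d-1-k)+(k-1) = d-2$ critical points in attracting periodic cycles and is bifurcating, so Corollary~\ref{cor:sinks} applies.

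The main obstacle is precisely the slicing step: one must guarantee that at each stage, forcing a critical point into an attracting basin leaves the residual wedge current non-trivial on the corresponding subvariety of parameter space. A clean way would be to invoke a general density statement asserting that parameters where $k-1$ specified critical points are captured into sinks while at least one further critical point remains active are dense in $\supp(\tbif^k)$; such a result is in the spirit of the intersection theory of bifurcation currents developed by Dujardin--Favre and Buff--Epstein, but its precise formulation is the delicate technical point of the corollary.
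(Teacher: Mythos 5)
Your overall strategy is exactly the paper's: approximate $p$ by a polynomial with $d-2$ critical points in attracting cycles and one active critical point, then invoke Corollary~\ref{cor:sinks}. But the step you yourself flag as ``the delicate technical point'' is a genuine gap, and the inductive slicing you sketch is not a workable substitute. ``Slicing $\supp(T_{c_1}\wedge\cdots\wedge T_{c_k})$ by the boundary of a hyperbolic component'' is not a defined operation on positive closed currents, and at each stage you would need to prove that the residual wedge $T_{c_2}\wedge\cdots\wedge T_{c_k}$ restricts non-trivially to the capture locus --- which is essentially a restatement of what you are trying to establish, not a consequence of Montel's theorem. As written, the argument does not close.

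The paper fills the gap with a single citation rather than an induction: after marking the critical points and reordering so that $p\in\supp(T_1\wedge\cdots\wedge T_k)$ with $c_{k+1},\dots,c_{d-1}$ in sinks, it invokes the equidistribution theorem of Dujardin--Favre \cite[Thm 6.16]{preper}. Continuity of the potentials of the $T_i$ gives
$T_1\wedge\cdots\wedge T_k=\lim_n T_1\wedge d_n^{-1}[W_n]$, where $W_n$ is a codimension $k-1$ subvariety of $\poly_d$ along which $c_2,\dots,c_k$ are \emph{periodic} (hence superattracting) --- all $k-1$ captures are performed simultaneously, which is what sidesteps your slicing problem. Since $p$ lies in the support of the limit current, there are parameters $p_n\cv p$ in $\supp(T_1)\cap W_n$; at such $p_n$ the critical point $c_1$ is active, $c_2,\dots,c_k$ are superattracting, and the remaining $d-1-k$ sinks persist by openness, so Corollary~\ref{cor:sinks} applies. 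If you want to complete your proof, replace the inductive capture by this one-shot approximation (or prove the density statement you allude to, which in this polynomial setting is precisely the quoted theorem).
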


Notice that this holds in particular when $p$ belongs to the Shilov boundary of the connectedness locus (corresponding to the case 
$k=d-1$).

   \begin{proof}[Proof of Theorem \ref{thm:topology}]
Observe first that it is enough to prove the result under the assumption 

\smallskip

 {(A2')}  {\it there exists a  simple   critical point $c$ for $p$ and an integer $k\geq 1$  such that $p^k(c)\in E$.}
 
 \smallskip 
 
 Indeed  let $p_0$ be a polynomial satisfying (A1) and (A2). Since $p_0$ belongs to the bifurcation locus, it has an active critical point $c$. Taking a branched cover of 
 parameter space, we may always assume that $c$ can be followed holomorphically as $p\mapsto c_p$.  
 Also, $E$ locally persists as a repelling set $E_p$ in a neighborhood of $p_0$.
 A well-known and elementary normal families argument shows that  
    there exists an arbitrary small perturbation     $p_1$  of $p_0$
     and an integer $k$ 
    such that $  p_1^k(c_{p_1})\in E_{p_1}$. 
    Furthermore   the set of polynomials  $p\in \poly_d$ possessing  
a multiple critical point is algebraic. Since $E$ is infinite, the set of polynomials  $p\in \poly_d$ such that $p(c_p)\in E$ is not locally analytic near $p_1$. 

Altogether, it follows that there is a sequence of polynomials $p_j$ satisfying (A1) and (A2')
 and converging to $p_0$. Thus, if the theorem has been shown to  hold under the assumptions (A1) and (A2')
  we simply pick such a sequence 
 and get that the result holds for $p_0$ as well.

\medskip

In a first stage let us prove the theorem in the case where $k=1$. 

\medskip
  
  \noindent{\bf Step 1:} Topological stability  of the hyperbolic set.
  
  For $\e=0$, $f=f_0$ admits a basic repelling set $\E = E\times S^1$. Notice that $E$ has empty interior in the plane. 
  If $g\in \hold$ is sufficiently close to $f$,    
  $\E = \E({f})$   admits a continuation $\mathcal{E}({g})$ as a hyperbolic set, that is, there exists  a continuous (even holomorphic in $g$)
       family of equivariant homeomorphisms  $h_{f,g}: \E(f)\cv \E({g})$.
   Actually there is more: 
   
   \begin{lem}\label{lem:extension}
   The conjugating  homeomorphism $h_{f, g}$ can be extended to a homeomorphism of $\pd$ (not compatible with the dynamics)
   depending continuously on $g$ and such that $h_{f, f} = \mathrm{id}$.
   \end{lem}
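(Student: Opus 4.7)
The starting observation is that $(g, x) \mapsto h_{f,g}(x)$ is a holomorphic motion of $\E(f) = E \times S^1$ in $\pd$, parametrized by a neighborhood $U$ of $f$ in $\hold$. Indeed, this is precisely what is asserted in the paragraph preceding the lemma: $h_{f,g}$ depends holomorphically on $g$, continuously on $x$, and is injective for each $g$. The plan is to extend this motion to a continuous family of self-homeomorphisms of $\pd$, dropping the holomorphic dependence on $g$ in the extension.

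Since the lemma will be applied in the context of the specific family $f_\e(z,w) = (p(z) + \e q(w), w^d)$, I would first exploit the product structure of this perturbation. Because the second-coordinate dynamics $w \mapsto w^d$ is unperturbed and $S^1$ is its repeller, the conjugacy is forced to preserve the $w$-coordinate: $h_{f,f_\e}(z,w) = (\phi_\e(z,w),w)$. For each fixed $w \in S^1$, the map $\e \mapsto \phi_\e(\cdot,w)$ is then a holomorphic motion of $E$ inside $\pu$, to which Slodkowski's theorem applies and produces an extension to a holomorphic motion $\tilde\phi_\e(\cdot,w)$ of the whole $\pu$.

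I would then glue these slicewise extensions into a global map by setting $\tilde h_{f,f_\e}(z,w) = (\tilde\phi_\e(z,w), w)$ on $\cc \times S^1$, and interpolating with the identity off a thin tubular neighborhood of this set using bump functions---one in $|w|$ to extend from $S^1$ to an annulus in $\pu$, and one in $|z|$ to treat the line at infinity of $\pd$. For $\e$ small the resulting map is a uniformly small $C^0$-perturbation of the identity, hence a self-homeomorphism of $\pd$ depending continuously on $\e$, with $\tilde h_{f,f} = \id$.

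The main obstacle I anticipate is the continuity in $w$ of the slicewise Slodkowski extension: Slodkowski's theorem guarantees existence but not uniqueness of the extension, and standard proofs involve non-canonical choices that could destroy continuity of $\tilde\phi_\e(\cdot,w)$ in the parameter $w$. To circumvent this, one can invoke refined versions of Slodkowski's theorem (due to Chirka and others) that produce extensions depending continuously on the input motion in the compact-open topology; alternatively, one may bypass Slodkowski entirely and build the extension from local holomorphic coordinates around $\E(f)$ glued by a partition of unity, at the price of extra bookkeeping to ensure global injectivity.
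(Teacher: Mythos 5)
There is a genuine gap: your construction only treats perturbations $g$ lying in the one-parameter family $f_\e$, whereas the lemma is stated — and used — for an \emph{arbitrary} $g$ in a neighborhood of $f$ in $\hold(\pd)$. In Step 3 of the proof of Theorem \ref{thm:topology} the lemma is invoked to produce the defining functions $\Phi_{f'}$ for the hyperbolic set of an arbitrary $f'\in\hold(\pd)$ close to $f_0$; for such $f'$ the product structure is destroyed, $\E(f')$ need not lie in $\cc\times S^1$, the conjugacy has no reason to preserve the $w$-coordinate, and the slicewise reduction to one-dimensional motions of $E$ collapses. Moreover, even granting a holomorphic motion of $\E(f)$ over all of the neighborhood of $f$ in $\hold(\pd)$, Slodkowski's theorem is a strictly one-dimensional-parameter statement (extension of holomorphic motions fails in general over higher-dimensional parameter spaces), so the route through Slodkowski cannot give the continuity in $g$ that the lemma asserts. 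A secondary issue: your gluing step claims that a map which is a uniformly small $C^0$-perturbation of the identity is a homeomorphism; this is false (closeness to the identity gives surjectivity by degree theory but not injectivity). To make the interpolation between $\tilde h$ and $\id$ across the tubular neighborhood injective you would need to run along an isotopy — e.g. $r\mapsto \tilde\phi_{\chi(r)\e}(\cdot,e^{i\theta})$ using the motion itself — rather than a convex combination controlled only in $C^0$.

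For comparison, the paper's proof is purely metric/dynamical and makes no use of the product structure or of holomorphic motions: one fixes an isolating neighborhood $U$ of the repeller, sets $h=\id$ on $\fr U$, chooses a homeomorphism from the fundamental domain $\overline U\setminus f^{-1}(U)$ to $\overline U\setminus g^{-1}(U)$ that is $\e$-close to the identity, propagates it by $h= g_{-k}\rond h\rond f^k$ on $\overline U\setminus \E(f)$, and then uses expansivity (shadowing estimates of the form $d(x,h(x))<\delta/3$ plus a Cauchy-sequence argument) to show that $h$ extends continuously across $\E(f)$; the extension to $\pd$ is then $h=\id$ off $U$. If you want to keep your complex-analytic approach, you must either restrict the statement to the family $f_\e$ (and then you cannot run Step 3 of the theorem as written), or replace Slodkowski by an argument valid for arbitrary perturbations in $\hold(\pd)$ — at which point you are essentially forced back to the shadowing construction.
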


   \begin{proof}
   The result  is more or less part of the folklore.   
 For completeness we sketch the argument. 
    Let $U$ be an isolating neighborhood of $\Lambda$, which can be assumed to have smooth boundary, 
    and $\mathcal N$ be  a neighborhood of $f$ in $\hold$ such that for $g\in \mathcal N$, 
  $\bigcap_{n\geq 0} g^{-n}(U) = \E(g)$ (here and in the rest of the proof only 
    preimages remaining in $U$ are taken into account). By the hyperbolicity 
    assumption (and reducing $U$ and $\mathcal N$ a bit if necessary), there exists $\delta>0 $ and $\lambda>1$ 
    so that if $x,y\in U$ are such that $f^k(x)$ and  $f^k(y)$ remain in  $U$ and 
    $d(f^k(x), f^k(y))\leq \delta$ for $0\leq k\leq n$ then $d(x,y)<\delta \lambda^{-n}$, and the same is true for every $g\in \mathcal N$.  Reducing $U$ and $\mathcal N$  again  if necessary, we may assume that for every $g\in \mathcal N$ and $x\in U$
    different preimages of $x$ under $g$ stay far apart, so that 
     each such preimage can be followed unequivocally 
     as $g$ varies in $\mathcal N$. In addition we can assume that for every $x$, the continuation $g_{-1}(x)$ of any given preimage  $f_{-1}(x)$ of $x$
     satisfies 
     $d(f_{-1}(x), g_{-1}(x))<\e$ for  $g\in\mathcal N$,  where $\e< \delta (1-1/\la)/3$. 
     
   Given such   $g$ we will construct the conjugacy $h = h_{f,g}$ 
   by starting from  $\fr U$. Fix $h = \id$ on $\fr U$,  then define $h : f^{-1}(\fr U)\to g^{-1}(\fr U)$ by assigning to every $y\in f^{-1}(\fr U)$ the corresponding preimage of $f(y)$ under $g$, and  finally  extend $h$ to a homeomorphism  from $\overline U\setminus f^{-1}(U)$ to $\overline U\setminus g^{-1}(U)$. In addition we can ensure that $h$ is $\e$-close to the identity. 
    Now for every $x\in \overline U\setminus \E(f)$ we set $h(x) = g_{-k} \rond  h \rond  f^k(x)$, where $k$ is the last integer 
    such that $f^k(x) \in  U$, and $g_{-k}$ is the inverse branch of $f^k$ at $f^k(x)$ obtained by continuation of $f_{-k}: f^k(x)\mapsto  x$. 
    In this way we get an equivariant homeomorphism $h: \overline U\setminus \E(f)\to \overline U\setminus \E(g)$.
    
    \medskip
    
    The point is to show that this conjugacy extends continuously to $\E(f)$. Then the resulting extension will be a homeomorphism by reversing the roles of $f$ and $g$, which simply extends to $\pd$ by declaring that $h= \id$ outside $U$, thereby concluding the proof.     
    
    To prove that such an extension exists, 
    we need to prove that for every $x\in \E$, if  
    $(x_n) \in (U\setminus \E)^\nn$ is any sequence converging to $x$  
    then $(h(x_n))$ converges and its limit does not depend on $(x_n)$. 
    
    \medskip
    
    As a first step towards this result, let us show that 
    for every $x\in U\setminus \E$, $d(x,h(x))<\delta/3$. 
For this, let $k$ be the last integer such that $f^k(x)\in U$ and let us 
  show by   induction on $q$ that 
$d(f^{k-q}(x), g^{k-q}(h(x)))\leq \e\sum_{j=0}^{q} \la^{-j}$. Then the result follows from our choice of $\e$ by putting $q=k$. 
For $q = 0$, $g^k(h(x)) = h(f^k(x))$, and $d(f^k(x), h(f^k(x)))\leq \e$ by definition of $h$. Now assume the result has been proved for some $0\leq q\leq k-1$. Then 
 \begin{align*}
 d&( f^{k-q-1}(x), g^{k-q-1}(h(x)))\\ & \leq d (f_{-1}(f^{k-q}(x)), g_{-1}(g^{k-q}(h(x)))) \text{ where } f_{-1}, \ g_{-1} \text{ are appropriate inverse branches}\\
 &\leq  d (f_{-1}(f^{k-q}(x)), g_{-1}(f^{k-q}(x))) + d (g_{-1}(f^{k-q}(x)), g_{-1}(g^{k-q}(h(x))))\\
 &\leq \e + \unsur{\lambda }d ( f^{k-q}(x),  g^{k-q}(h(x)))
 \end{align*} 
where the last inequality follows from the definition of $\e$ and  the induction step. The estimate $d(x,h(x))<\delta/3$ follows. 

\medskip

We are now in position to conclude  the argument. If $(x_n)$ and $(y_n)$ are two sequences converging to $x$, then for large $n$, the orbit of 
$x_n$ (resp. $y_n$) shadows that of $x$ for a long time. More precisely there exists  $k = k(n)\underset {n\cv\infty}{\longrightarrow} \infty$ 
such that for $j\leq k$,
$d(f^j(x_n), f^j(y_n))< \delta/3$. Therefore $$d(h(f^j(x_n)), h(f^j(y_n))) = d(g^j(h(x_n)), g^j(h(y_n)))<\delta$$ and we conclude that 
$d(h(x_n), h(y_n))<\delta \la^{-k(n)}$. The same argument applied to $x_n$ and $x_m$ for large $n,m$ implies that 
$(h(x_n))$ is a Cauchy sequence. Altogether, we infer that the sequence $(h(x_n))$ converges to a limit which depends only on $x$. This implies 
that the conjugacy extends to $\E$ and finishes the proof of the lemma.  
   \end{proof}

 \noindent{\bf Step 2:} Analysis of $f_\e$.  
 
 In this paragraph we fix $f_\e$ as in the statement of the theorem (an explicit expression for $q$ will be given afterwards), and work only within the family 
 $(f_\e)$, where $\e$ ranges in a small neighborhood of $0\in \cc$. 
 For notational ease, all dynamical objects will be indexed by $\e$ (like $\mathcal E_\e$, etc.). The problem is semi-local around $\E$ so we work in $\cd$. 
 Also, we normalize the $z$-coordinate so that  $p(c) = 0$.  

For every $\e$, the vertical line $C = \set{c}\times \cc$ is a critical component of $f_\e$. We set  $V_\e = f_\e(\set{c}\times \cc)$, which is parameterized by
\begin{equation}\label{eq:Veps}
V_\e = \set{(\e q(w), w^{d}), w\in \cc}.
\end{equation} The solid torus $\cc\times S^1$ is totally invariant under the dynamics and contains $\E_\e$. 
For small enough $\e$, the hyperbolic set $\E_\e$ is homeomorphic to and close to  $E\times S^1$. 
Then from (A1) and  Lemma \ref{lem:extension}
(applied to $f\rest{\cc\times S^1}$)
 its complement in $\cc\times S^1$ admits both bounded connected components (the {\em inside} part of $\E_\e$) and a unique 
unbounded one (the {\em outside}). 

The following lemma provides a form of ``topological transversality" between $V_\e$ and $\mathcal{E}_\e$. 

\begin{lem}\label{lem:epsj}
There exists a polynomial $q$ of degree $\leq 2$ and a sequence 
$\e_j \cv 0$
such that for   $\e = \e_j$, 
 the real analytic curve 
$$v_\e  = V_\e \cap (\cc\times S^1) = \set{(\e q(w), w^{d}), \abs{w} = 1}$$ 
intersects 
both the inside and the outside  of $\E_\e$. 
\end{lem}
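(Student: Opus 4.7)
The plan is to reduce the topological question about $v_\e$ and the inside/outside decomposition of $\E_\e$ in $\cc\times S^1$ to a planar question near the point $0 = p(c)\in E$: find $q$ of degree $\leq 2$ and $\e_j\to 0$ such that the planar curve $\pi_1(v_{\e_j}) = \e_j q(S^1)$ meets both the unbounded component $O$ of $\cc\setminus E$ and some bounded component $I$. Since the solid torus $\cc\times S^1$ is totally invariant, Lemma~\ref{lem:extension} can be applied inside it, yielding for each small $\e$ a homeomorphism $h_\e$ of $\cc\times S^1$ sending $\E_0 = E\times S^1$ onto $\E_\e$ and tending to $\id$ uniformly. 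Such $h_\e$ sends $O\times S^1$ onto the outside of $\E_\e$ and each bounded component $I\times S^1$ onto an inside component. Therefore, once $\e_j q(S^1)$ meets both $O$ and some $I$ in $\cc$, lifting to $\cc\times S^1$ and applying $h_{\e_j}$ to the corresponding points of the lift produces, after a small correction bounded by the modulus of continuity of $h_{\e_j}^{-1}$, points on $v_{\e_j}$ in both the inside and the outside of $\E_{\e_j}$.

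Next I would arrange for $0 = p(c)$ to be a \emph{mixed boundary point}, that is, $0\in\partial O\cap\partial I$ for some bounded $I$. The inclusion $0\in\partial O$ is automatic: $E$ lies in the Julia set of $p$, which coincides with the boundary of the basin of infinity, and that basin is contained in $O$. To secure $0\in\partial I$, note that the normal-families argument used to pass from (A2) to (A2') can be refined to place $p^k(c_p)$ arbitrarily close to any non-exceptional point of $E_p$. Since (A1) forces some bounded component $I$ of $\cc\setminus E$ with $\partial I\subset E$ nonempty and infinite, and since repelling periodic points are dense in $E$ while topological transitivity of $p\rest{E}$ ensures that those lying on $\partial I$ are themselves dense in $\partial I$, one may target a repelling periodic point in $\partial O\cap\partial I$ for $p(c)$ without violating (A1) or (A2').

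With this done, take $q(w) = w$, so that $\e q(S^1) = \set{\abs{z}=\e}$ is a Euclidean circle around $0$. It remains to find arbitrarily small $\e_j$ for which this circle meets both $O$ and $I$. The plan is to exploit the local expansion of $p$ along the forward orbit of $0$: the contracting inverse branches of $p^n$ conjugate, with almost isometric distortion at the base point, a macroscopic picture of $E$ near an iterate $p^n(0)$ to a microscopic picture near $0$, while preserving the complementary-component labels, since $p$ permutes the components of $\cc\setminus E$. A one-time verification that at some fixed macroscopic scale around a well-chosen repelling periodic point in $\partial O\cap\partial I$ a Euclidean circle crosses both $O$ and $I$ then transports through these inverse branches to produce circles $\set{\abs{z}=\e_j}$ with $\e_j\to 0$ and the same crossing property. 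The main obstacle is this one-time macroscopic verification, which requires the accesses to $O$ and to $I$ around the target periodic point to occupy overlapping radial ranges; this can be arranged using local connectedness of hyperbolic Julia sets together with the density of cut points along $\partial I$, so that circles at suitable scales near the target genuinely separate $O$-pieces from $I$-pieces.
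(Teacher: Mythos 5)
Your proposal has two genuine gaps, and they sit exactly where the paper's proof does its real work.

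First, the reduction to the planar picture via $h_\e$ is quantitatively unjustified. With $q(w)=w$, every point of $v_\e$ projects to the circle $\set{\abs{z}=\e}$ and hence lies within distance $\e$ of $E\times S^1$ (since $0\in E$). On the other hand, the continuation $\E_\e$ of $E\times S^1$ and the extended homeomorphism $h_\e$ are only known to differ from $\E_0$, resp.\ from the identity, by $O(\e)$ with an uncontrolled constant. So the ``small correction bounded by the modulus of continuity of $h_{\e_j}^{-1}$'' is of the same order as the quantity you are trying to detect, and the comparison does not close. The paper circumvents this entirely by choosing $q$ with $q(1)=0$: then the fiber $\set{w=1}$ is $f_\e$-invariant with dynamics independent of $\e$, so $\E_\e\cap\set{w=1}=E\times\set{1}$ \emph{exactly}, and the inside/outside test for the $d$ points of $v_\e\cap\set{w=1}$ is performed against the unperturbed set $E$ with no error term; Lemma~\ref{lem:extension} is then invoked only for the soft statement that components of $\set{w=1}\setminus E$ correspond bijectively to components of $(\cc\times S^1)\setminus\E_\e$, via an $h_\e$ equal to the identity on that fiber.

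Second, the ``one-time macroscopic verification'' you defer is precisely the heart of the lemma, and you have not supplied it. Knowing that $0$ is accumulated by both $\mathrm{Out}(E)$ and $\mathrm{Inn}(E)$ (which, by hyperbolicity and transitivity, holds for \emph{every} point of $E$, so your detour through periodic points on $\fr O\cap\fr I$ is unnecessary and, as stated, also unproved: density of periodic points in $E$ does not place them on a prescribed $\fr I$) only yields two sequences of radii $r_k\cv 0$ and $s_k\cv 0$ such that $\set{\abs{z}=r_k}$ meets $\mathrm{Out}(E)$ and $\set{\abs{z}=s_k}$ meets $\mathrm{Inn}(E)$. Nothing prevents the radial ranges of access to $\mathrm{Out}(E)$ and to $\mathrm{Inn}(E)$ from being interleaved and disjoint, in which case no single circle centered at $0$ meets both; appealing to local connectedness and ``density of cut points'' does not exclude this ($E$ is a hyperbolic repeller, not a full Julia set, and no such structure is established). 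The paper's argument is built exactly to handle this scenario: it tracks two marked points $\e q(\zeta^\pm)$ whose ratio $q(\zeta^+)/q(\zeta^-)$ is an irrational rotation, and shows that failure for all small $\e$ would force $\mathrm{Inn}(E)$ to be invariant under that rotation near $0$, contradicting $0\in\overline{\mathrm{Out}(E)}$. Without an argument of this type, or an actual proof of your radial-overlap claim, the proposal is incomplete.
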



\begin{proof}
 Take $q$ of the form $q(w)  =  (w-1) \tilde q (w)$. Then the fiber $\set{w=1}$ is invariant under $f_\e$.  Since in addition 
$f_\e(z,1 ) = (p(z),1)$ does not depend on $\e$, we get that 
$\E_\e \cap \set{w=1} = E\times \set{1}$ (indeed, if  $x\in E\times \set{1}$ the $f_\e$-orbit of $x$ trivially 
shadows its $f$-orbit). The curve $v_\e$ intersects  $\set{w=1}$ in $d$ points 
$\e q(\zeta^k)$, where $\zeta = e^{\frac{2i\pi}{d}}$ (here we identify  $\set{w=1}$ with $ \cc$ and  
 drop the second coordinate). Notice that for $k=0$, $\e q(1) = 0 $ belongs to  
$E$.  Put $\zeta^{\pm} = \zeta^{\pm 1}$. 

Denote by $\mathrm{Out}(E)$ (resp. $\mathrm{Inn}(E)$)  the unbounded component of (the union of bounded connected components of) 
the complement of $ E$ in $\cc$, 
which by assumption are both non-empty. Remark that the uniform hyperbolicity and transitivity of $E$ imply that every $x\in E$ is 
accumulated both by  $\mathrm{Out}(E)$  and  $\mathrm{Inn}(E)$.

\medskip

\noindent{\bf Claim:} there exists $\alpha$ so that for $q(w) = (w-1)(w-\alpha)$, there exist $\e_j \cv 0$ so that 
$\e q(\zeta^-)$ (resp.  $\e q(\zeta^+)$)  belongs to $\mathrm{Inn}(E)$ (resp. $\mathrm{Out}(E)$).

\medskip


Indeed let    $\alpha\in \re$ be such that $ \frac{\zeta^+-\alpha}{\zeta^--\alpha} = e^{i\theta}$ with $\theta\notin \pi \mathbb Q$ and 
assume  that the  claim is false. 
Then for every  small enough $\e$, 
 $\e q(\zeta^-) \in \mathrm{Out}(E)$ implies that  $\e q(\zeta^+) \notin \mathrm{Inn}(E)$. 
 Using the above remark we reformulate this as
 $$\e q(\zeta^-) \in \mathrm{Out}(E) \Rightarrow \e q(\zeta^+) \in  \cc\setminus   \mathrm{Inn}(E) =  \mathrm{Out}(E)\cup E = \overline {\mathrm{Out}(E)}$$
 so by continuity we conclude that 
 $$\e q(\zeta^-) \in \overline { \mathrm{Out}(E)} \Rightarrow\e q(\zeta^+) \in \overline { \mathrm{Out}(E)}.$$ 
 Of course the reverse implication holds by symmetry, and the same assertion holds for $\mathrm{Inn}(E)$ by taking the complement. 
Finally we conclude that   
   for every small enough $\delta \in \cc$, 
\begin{equation}\label{eq:rotation}
\delta \in   { \mathrm{Inn}(E)}  \Leftrightarrow  \delta \frac{q(\zeta^+)}{q(\zeta^-)}\in  { \mathrm{Inn}(E)} .
\end{equation}
Observe that 
  $$\frac{q(\zeta^+)}{q(\zeta^-)} = \frac{\zeta^+-1}{\zeta^--1} \frac{\zeta^+-\alpha}{\zeta^--\alpha} =  - e^{\frac{2i\pi}{d}} \frac{\zeta^+-\alpha}{\zeta^--\alpha} = - e^{\frac{2i\pi}{d}}  \e^{i\theta} =  \e^{i\theta'}, \text{ with }  {\theta'} \notin \pi \mathbb Q, $$
 therefore 
 by \eqref{eq:rotation},
 $\mathrm{Inn}(E)$  is invariant under an irrational rotation about 0. 
Now if $\om$ is an open ball in   $\mathrm{Inn}(E)$ close to 0, its orbit under this irrational rotation contains a circle about 0, which contradicts the fact that $0\in \overline{ \mathrm{Out}(E)}$, and concludes the proof of the claim.

     Note that when 
      $E$ is a Jordan curve, which is often the case in practice (see Lemma \ref{lem:active}),
      the argument can be simplified a bit. In particular 
     $E$ cannot be invariant under multiplication by $e^{\frac{2i\pi}{d}}$ at 0 so
      we can simply choose $q(w) = w-1$. 
      
      \medskip
 
 Summing up,  we have shown that for $\e = \e_j$, $v_\e\cap \set{w=1}$ intersects different connected components of $\set{w=1}\setminus E$.    
Thus to conclude the proof of the lemma  it is enough to prove 
that for   $\e$ small enough the map $x\mapsto \mathrm{Component}(x)$ induces a     
 1-1 correspondence between the
  connected components of $\set{w=1}\setminus E$ 
and the  connected components   of $(\cc\times S^1)\setminus  \mathcal E_\e$. 

Indeed this is obvious for $\e=0$. Now when we vary $\e$, 
Lemma \ref{lem:extension} shows that there is a homeomorphism $h_\e$ of $\pd$ such that $h_\e(\mathcal E) = \E_\e$, which is homotopic to 
the identity. Since $f_\e\rest{\set{w=1}}$ does not depend on $\e$, 
the construction of $h_\e$ shows that 
we can choose $h_\e = \mathrm{id}$ on this fiber. Thus for every $x\in \set{w=1}\setminus E$, $h_\e$ sends the connected 
component of $ (\cc\times S^1)\setminus \mathcal E$ containing $x$ to 
the connected component of $ (\cc\times S^1)\setminus \mathcal E_\e$ containing $x$, which was the desired claim. 
This finishes the proof of Lemma \ref{lem:epsj}. 
\end{proof}
  
 \noindent{\bf Step 3.} Robustness of the Misiurewicz phenomenon in $\hold(\pd)$.
 
Consider an arbitrary holomorphic map $f'\in \hold(\pd)$ close to $f_0$.
 Let us first analyse how the post-critical component $V_0$ can be continued for $f'$  (this has been done explicitly before for the family $(f_\e)$).
 Again, the problem is semi-local around $\E$ so for convenience we work 
 in coordinates in $\cd$.
 The vertical line    $\set{c}\times \cc$ is a component of multiplicity 1 of $\crit(f_0)$, 
 and moreover $\crit(f_0)$ is smooth along $\set{c}\times \cc$  outside 
 $\set{w=0}$. 
 Therefore every compact piece of $(\set{c}\times \cc)\setminus \set{w=0}$ can be followed as a part of the critical set for $f'$ close to $f_0$. 
 More specifically, if $r$ is so small that $D(c,r)$ contains no other critical point of $p$, and if 
 we  let $U = D(c, r)\times \set{1/10 <  \abs{w} < 10}$, there exists a neighborhood $N(f_0)$ such that for every 
 $f'\in N(f_0)$, $\crit(f')\cap U$ is of multiplicity 1 and consists in an annulus $A(f')$ 
 which is a graph over $A = \set{c}\times  \set{1/10 <  \abs{w} < 10}$, and  converges to $A$ as $f'\cv f_0$. 
 Finally we define  $V(f') $ to be  the component of $f'(\crit(f'))$ containing  $ f'(A(f'))$. 
 
 \medskip

 Fix a continuous function 
  $\varphi_0: \cc\times S^1\to \re$ (depending only on $z\in \cc$) such that $\E_0 = \set{\varphi_0= 0}$, $\mathrm{Inn}(\E_0)  = \set{\varphi_0<0 }$ and 
 $\mathrm{Out}(\E_0)  = \set{\varphi_0>0 }$. Extend it to $\cd$ by putting $\varphi_0(z,w):=\varphi_0(z)$. 
 Define  also $\psi_0$   by $\psi_0(z,w) = \abs{w}-1$, and let $\Phi_0=(\varphi_0, \psi_0): \cd\cv \re^2$, so that 
 $\E_0 = \set{\Phi_0 = 0}$. By Lemma \ref{lem:extension}, for $f'$ close to $f_0$, there exists a continuous map
 $\Phi_{f'} = (\varphi_{f'}, \psi_{f'}):\cd\cv\re^2$ such that $\E({f'}) = \set{ \Phi_{f'} =0}$. Notice that for $f' = f_\e$,
  since $\E_\e\subset \cc\times S^1$, 
 we can choose $\psi_{f_\e}=:\psi_\e =\psi_0$. 
 
 Let now $f' = f_{\e_j}$ be as in  Lemma \ref{lem:epsj}. 
  We will use elementary algebraic topology to show that $V(f')$ intersects $\E({f'})$ for $f'$ close to $f_{\e_j}$. 
 Working in the natural parameterization of $V_\e$ given in \eqref{eq:Veps}, 
the real valued   function $\varphi_{\e_j}$ changes sign along the 
 segment $[\zeta^-, \zeta^+]$  of the real curve $v_{\e_j}$, say $\varphi_{\e_j}(\zeta^-) < 0 < \varphi_{\e_j}(\zeta^+)$. 
 In other words, $\varphi_{\e_j}\circ f_{\e_j} (c, \cdot)$ changes sign along the 
 segment $[\zeta^-, \zeta^+]$  of the unit circle. 
 
Consider a simple loop $\ell$ in the complex plane, starting at $\zeta^-$, then joining it to $\zeta^+$ outside the unit disk, 
 and then returning back to    $\zeta^-$ inside the unit disk, and  staying  in the annulus $A$. 
 (e.g. we can take the exponential of the oriented 
  boundary of the rectangle $[1-\rho, 1+\rho]\times \left[-\frac{2\pi}{d},\frac{2\pi}{d}\right]$ for some small $\rho>0$.)  
 Then  the loop $f_{\e_j}(\set{c}\times {\ell})$  is disjoint from $\E_{\e_j}$ and 
 the winding number of  $\Phi_{\e_j}$ along $  f_{\e_j} (\set{c}\times\ell)$ is equal to 1. 
 
 For  $f'$ sufficiently close to $f_{\e_j}$, we can lift $\set{c}\times\ell$ to a loop $\ell'$ contained 
 in $A(f') \subset \crit(f')$. By continuity,
  the winding number of $\Phi_{f'}$ along $f'\circ \ell'$ is 1.  Since $\ell'$ bounds a disk in $A(f')$, we infer that 
 $\Phi_{f'}$ must vanish  on $f'(A(f'))$. In other words, $V(f')$ intersects $\E({f'})$, which was the desired result. 
 
 To conclude that a robust Misiurewicz bifurcation occurs at $f_{\e_j}$, it remains to check that   the intersection between  $V(f')$ and 
  $\E(f')$ that we have produced is proper. For this, it is enough to observe that there are product maps $(p_1(z), w^d)$, with $p_1$ arbitrary close to $p$,   
 such that if $c_1 \in \crit(p_1)$ is the critical point   continuing $c$, then $p_1(c_1)\notin E(p_1)$. Indeed $c$ is active at $p$ in $\poly_d(\cc)$ so under arbitrary small perturbations, it can be sent into the basin of a sink. This finishes the proof of the theorem in the case where $k=1$. 
 
  \medskip  
   
  It remains to treat the case where $k$ is arbitrary. 
  Observe that we can assume that $p^j(c)$ is not critical for $1\leq j\leq k$, otherwise we replace 
  $c$ by the last appearing critical point in this orbit segment. The structure of the proof is the same. Step 1 doesn't need to be modified.
  
 For Step 2, since  in the case $k=1$ we have used 
   the explicit parameterization of $f_\e(\set{c}\times \cc)$,   here
    rather than working with $f_\e^k(\crit(f_\e))$, 
   we show that $f_\e (\crit(f_\e))$   intersects $f_\e^{-(k-1)}(\E_\e)$ in topologically transverse manner
    for a well-chosen sequence $(\e_j)$.  The only slight difficulty to keep in mind when pulling back 
    is that $\E_\e$ can intersect other components of the post-critical set. 
    
  We put     $F = p^{-(k-1)}(E)$ and normalize the $z$ coordinate so that $p(c) = 0 \in F$.
As in the previous case let $q$ be of the form $q(w) = (w-1)(w-\alpha)$.
      Let 
    $\F_\e = f_\e^{-(k-1)}(\E_\e)$ which is contained in $\cc\times S^1$. 
    For $\e =0$, $\F_0 = F\times S^1$ but now $\F_\e$ needn't be homeomorphic to   $\F_0$. 
     Since $f$ is proper and dominant $\F_\e$  is a compact set  with empty interior. 
     Thus if $\om$ is a bounded connected component of $(\cc\times S^1)\setminus \E_\e$, 
     $f_\e^{-(k-1)}(\om) $ is a union of bounded components of $(\cc\times S^1)\setminus \F_\e$.  It follows that in $\cc\times S^1$, 
    $\mathrm{Inn} (\F_\e)  = f_\e^{-(k-1)}(\mathrm{Inn} (\E_\e))$ and likewise for $\mathrm{Out} (\F_\e)$. 
    
    We   have that $\F_\e\cap \set{w=1} = F$, and if $(z,1)\in \mathrm{Inn}(F)$ in $\cc$, then  $(z,1)\in \mathrm{Inn}(\F_\e)$ in $\cc\times S^1$, and likewise for the outer part. Indeed  $f^{k-1}(z,1) = (p^{k-1}(z),1)$ and $p^{k-1}(z) \in \mathrm{Inn}(E)$ so $(p^{k-1}(z),1)  \in \mathrm{Inn}(\E_\e)$ and the result follows by pulling back the corresponding inner component. So we can argue exactly as in Lemma \ref{lem:epsj} to conclude that there exists a 
    sequence $\e_j\cv 0$ such that the real curve $v_e = f_\e(\set{c}\times \cc )\cap( \cc\times S^1)$ intersects both 
  $\mathrm{Inn}(\F_\e)$ and $\mathrm{Out}(\F_\e)$.

  This analysis being done, we can push forward by $f_{\e_j}^{k-1}$ to get a segment of the real analytic curve 
  $f_{\e_j}^k (\set{c}\times \cc)\cap (\cc\times S^1)$ whose endpoints belong to $\mathrm{Inn}(\E_{\e_j})$ and 
  $\mathrm{Out}(\E_{\e_j})$ respectively and apply the argument of Step 3 without modification. The proof is complete. 
    \end{proof}
  
  Corollary \ref{cor:sinks} is a direct consequence of the following lemma. 
  
\begin{lem}\label{lem:active}
Let $p$ be a polynomial of degree $d\geq 3$ with marked critical points $c_1, \ldots , c_{d-1}$. Assume that $c_2, \ldots c_{d-1}$ are attracted by attracting cycles and that $c_1$ is active. Then there exists $\widetilde p$ arbitrary close to $p$ satisfying  assumptions (A1) and (A2).
\end{lem}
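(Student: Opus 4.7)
The plan is to find $\widetilde{p}$ close to $p$ that is a classical Misiurewicz parameter at the active critical point $c_1$, and to exhibit the basic hyperbolic repeller required by (A1) from the boundary of a surviving sink basin. Since $c_1$ is active at $p$ and the $(d-2)$ sinks attracting $c_2,\ldots,c_{d-1}$ persist under small perturbations, a standard Montel-type argument analogous to the one used in the proof of Theorem \ref{thm:topology} produces arbitrarily small perturbations $\widetilde{p}$ of $p$ such that $\widetilde{p}^N(c_1(\widetilde{p})) = z_0$ for some repelling periodic point $z_0$ of $\widetilde{p}$ and some integer $N \geq 1$. This is a classical Misiurewicz configuration, hence $\widetilde{p} \in \bif$ and (A2) holds.

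At such $\widetilde{p}$ all critical orbits are bounded, so $J(\widetilde{p})$ is connected, and the bounded sink basins force $J(\widetilde{p}) = \fr K(\widetilde{p})$ to disconnect $\cc$. Moreover $\widetilde{p}$ is semi-hyperbolic, hence sub-hyperbolic in the sense of Douady-Hubbard: $J(\widetilde{p})$ is locally connected and the boundary of every periodic Fatou component is a Jordan curve.

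Fix a periodic sink basin $F$ of period $n$ and set $\Gamma_F := \bigcup_{k=0}^{n-1} \fr \widetilde{p}^k(F)$, a $\widetilde{p}$-invariant disjoint union of $n$ Jordan curves in $J(\widetilde{p})$. The restriction $\widetilde{p}^n\rest{\fr F}$ is topologically conjugate to $z \mapsto z^\ell$ on the unit circle for some $\ell \geq 2$, so $\Gamma_F$ is a hyperbolic transitive invariant set. The condition that some iterate $\widetilde{p}^k(c_1)$ lies on the nowhere-dense set $\Gamma_F$ cuts out a real-codimension-$1$ subset inside the complex hypersurface of Misiurewicz parameters, so generically the whole (finite) post-critical orbit $P$ of $c_1$ lies off $\Gamma_F$; fix such a $\widetilde{p}$ and a small open $V \supset P$ with $\overline V \cap \Gamma_F = \emptyset$. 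The set
$$E := \bigcap_{n \geq 0} \widetilde{p}^{-n}(\cc \setminus V) \cap J(\widetilde{p})$$
is hyperbolic by sub-hyperbolicity away from $V$, and is the maximal invariant subset of the open set $U := \{z : \dist(z, J(\widetilde{p})) < \varepsilon,\ \dist(z, V) > \varepsilon,\ \dist(z, \text{sinks}) > \varepsilon\}$ for small $\varepsilon > 0$ (Fatou-set orbits eventually leave $U$ by convergence to sinks or to $\infty$, whereas orbits in $E$ remain in $U$), so it is locally maximal. Forward invariance of $\Gamma_F$ together with $V \cap \Gamma_F = \emptyset$ give $\Gamma_F \subset E$, and by the spectral decomposition theorem the basic component of $E$ containing the transitive set $\Gamma_F$ is a basic hyperbolic repeller; it contains the Jordan curve $\fr F$ separating the bounded component $F$ from $\infty$, hence disconnects $\cc$, giving (A1). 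The main obstacle lies in the geometric bookkeeping of this last paragraph: namely, arranging the Misiurewicz perturbation so that the entire post-critical orbit of $c_1$ avoids $\Gamma_F$, and carefully excising Fatou-type behavior to verify that $E$ is indeed locally maximal.
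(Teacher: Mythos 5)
Your approach is a genuinely different route from the paper's: you perturb $p$ to a classical Misiurewicz parameter (the active critical point $c_1$ landing on a repelling cycle), whereas the paper perturbs to a \emph{parabolic} parameter $p_1$ with a parabolic cycle $\alpha$ of period $\ell m$ and multiplier $\ne \pm 1$ (where $\ell$ is the period of the sink basin $\mathcal{B}$). The paper's choice is not cosmetic: it gives a clean, one-line reason why the bad orbit misses $\fr\mathcal{B}$. Since $p_1^{\ell m}$ acts as a rational rotation of order $>2$ at $\alpha$, no $p_1^{\ell m}$-invariant Jordan curve can pass through $\alpha$, while $\fr\mathcal{B}$ \emph{is} such a Jordan curve (Roesch--Yin); hence $\alpha\notin\fr\mathcal{B}$. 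Moreover a parabolic cycle must attract a critical point, and since $c_2,\dots,c_{d-1}$ are already taken by sinks, $c_1$ is forced into the parabolic basin, so $c_1$ is in the Fatou set and hence off $\fr\mathcal{B}$. Mañé's lemma then applies directly.

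The corresponding step in your argument is where the gap lies, and you are right to flag it. Your genericity argument --- ``the condition that some iterate $\widetilde{p}^k(c_1)$ lies on the nowhere-dense set $\Gamma_F$ cuts out a real-codimension-$1$ subset inside the complex hypersurface of Misiurewicz parameters'' --- is not rigorous as stated: $\Gamma_F$ itself depends on the parameter, so ``lands on $\Gamma_F$'' is not a fixed target, and one cannot readily perturb inside the Misiurewicz locus while simultaneously controlling $\Gamma_F(\widetilde{p})$ (whose continuity you haven't established, and indeed cannot be obtained via the continuation of a hyperbolic set since hyperbolicity of $\fr\mathcal{B}(p)$ is exactly what is in question). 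There is a shortcut that would repair this part: $\Gamma_F$ is forward-invariant, so $c_1\in\Gamma_F$ would force $z_0 = \widetilde{p}^N(c_1)\in\Gamma_F$; it therefore suffices to arrange that the target repelling point $z_0$ is not on $\Gamma_F$, and the Montel argument lets one choose the target cycle. But you would still need a continuity argument for $\Gamma_F(\widetilde{p})$ to propagate $z_0(p)\notin\Gamma_F(p)$ to the perturbed map, and this requires more care than you provide. The parabolic route simply bypasses the whole issue. Separately, ``semi-hyperbolic, hence sub-hyperbolic'' is a misstatement (the implication is false in general; what is true is that a Misiurewicz map with the remaining critical points attracted to sinks is sub-hyperbolic directly).

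Your final paragraph, constructing the auxiliary set $E$ and invoking spectral decomposition, is also unnecessarily heavy. As recalled in \S\ref{subs:repeller} of the paper, for an open map local maximality is automatic for a compact hyperbolic repelling set. So once one knows that $\Gamma_F$ (equivalently $\fr\mathcal{B}$) is hyperbolic (Mañé) and transitive (the Blaschke-product argument, essentially your ``conjugate to $z\mapsto z^\ell$ on $S^1$''), it is \emph{already} a basic repeller, with no need to embed it in a larger locally maximal set and decompose. This is exactly how the paper concludes.
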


\begin{proof}
Observe first that if $p$ is an arbitrary polynomial and 
$\mathcal{B}$ is the immediate basin of attraction of an attracting cycle of period $\ell$, such that 
$\fr\mathcal B$ contains no critical point nor parabolic periodic points, then by  Mañé's lemma \cite{mane fatou}
  $\fr \mathcal{B}$ is a hyperbolic set, which obviously disconnects the plane. We claim that it is transitive (recall that local maximality is automatic in this case). Indeed
  letting $\om$ be a component of $\mathcal{B}$ ($f^\ell(\om) = \om$), it is enough to show that $f^\ell\rest{\fr \om}$ is transitive.   
   It  follows from uniform hyperbolicity that  $\fr \om$ is locally connected, therefore the Riemann map $\phi: \dd\cv\om$ extends by continuity to
   $\phi: \overline\dd\cv\overline\om$. In addition 
    $\phi\rest\dd$ must be injective otherwise contradicting the polynomial convexity of $\om$. Thus   $\fr\mathcal{B}$ is a Jordan curve. 
    (This result actually holds  in much greater generality, see \cite{roesch yin}).   

The formula $\phi f^\ell \phi^{-1}$  defines a self-map of the  unit disk,  which can be extended by Schwarz reflexion to a rational map on $\pu$ preserving the unit circle, hence a Blaschke product. Now the Julia set of a Blaschke product is either the unit circle or a Cantor set inside the unit circle, and the second case occurs only if there is an attracting or parabolic fixed point on the circle, which is not the case here. We conclude that the Julia set is the circle, hence topologically transitive (even topologically mixing). Coming back to $f$ we 
conclude that $f^\ell\rest{\fr \om}$ is topologically mixing, which yields the desired claim.

 \medskip

Under the assumptions of the lemma, let 
 $\mathcal{B}$ be   the immediate attracting basin of the   cycle attracting, say, $c_2$, which is robust under perturbations, 
   and   consider the active  critical point $c_1$. 
   Since $p$ is a bifurcating polynomial, one can make a cycle of period $\ell m$ bifurcate close to $p$, so there exists 
$p_1$ close to $p$ with a parabolic point $\alpha$ of period $\ell m$ and multiplier different from $\pm 1$. Then:
\begin{enumerate}
\item $\alpha\notin \fr\mathcal B$: indeed $p_1^{\ell m}$ behaves like a rational rotation of order greater than $2$ at $\alpha$,  
 so there cannot exist an invariant  Jordan curve through $\alpha$. On the other hand by \cite{roesch yin}
 $\fr\mathcal{B}$ is a Jordan curve  invariant under $p^\ell$.
\item $\alpha$ must attract $c_1$ under iteration of $p_1^{\ell m}$, therefore $c_1\notin \fr\mathcal{B}$. 
\end{enumerate}
This implies that for this polynomial $p_1$, $\fr \mathcal B$ is a hyperbolic set  disconnecting the plane so (A1) holds. 
Since in addition  $p_1$ has a parabolic cycle, (A2) holds and the corollary follows. 
\end{proof} 

     \begin{proof}[Proof of Corollary \ref{cor:tbif}] 
We freely use the formalism of bifurcation currents, see \cite{survey} for an account. 
By passing to a branched cover, it is no loss of generality to assume that the critical points are marked as $c_1, \ldots, c_{d-1}$. Let $T_i$, $1\leq i \leq d-1$ be the associated bifurcation currents. Since $p\in \supp(\tbif^k)$, reordering the critical 
    points if necessary, we can assume that $p\in \supp(T_1\wedge \cdots \wedge T_k)$ and  that 
    $c_{k+1}, \ldots , c_{d-1}$ are attracted by sinks.  
    Then, using the continuity of the potentials of the $T_i$, we infer that 
    there exists a sequence of subvarieties  $[W_n]$ of codimension $k-1$ in $\poly_d$ along which $c_2, \ldots, c_k$ are periodic  
     and a sequence of integers $(d_n)$ 
    such that 
    $T_1\wedge \cdots \wedge T_k = \lim_{n\cv\infty} T_1\wedge d_n^{-1}[W_n]$ (see \cite[Thm 6.16]{preper}). 
    Therefore arbitrary close to $p$ there are polynomials with one active critical point and the remaining ones attracted by periodic sinks. The result then follows from Corollary \ref{cor:sinks}.     
     \end{proof}
     
  \section{Robust bifurcations from fractal geometry}\label{sec:blender}  

In this section we consider product mappings of the form $f(z,w) = (p(z), w^d+\kappa)$, where 
$p$ admits  a repelling fixed point $z_0$ that is almost parabolic, i.e. $1< \abs{p'(z_0)} < 1.01$ 
and $\kappa$ is large. 
We show that   certain  perturbations  of $f$ exhibit {\em blenders}, in the sense of Bonatti and Diaz \cite{bonatti diaz}, after proper rescaling.  
The degree 2 case requires a slightly different treatment so we  handle    the cases $d\geq 3$ and $d=2$ separately. Also we strive for   a
uniformity in $\kappa$ which allows to cover the parabolic case as well (see Corollary \ref{cor:parabolic}).

     \subsection{Estimates for Iterated Functions Systems in $\cc$}\label{subs:IFS}
  
  It is well-known \cite{daroczy katai, hare sidorov} that the limit set of the  IFS generated by two affine maps of the form $\ell_\pm (z) = \mu z \pm 1$ 
has empty interior when $\mu$ is non real and $\abs{\mu}$  is sufficiently close to 1. Since we will need precise estimates as well as variants of this result, we give a detailed treatment. It will be convenient for us to work with mappings that are contractions  on the unit disk, so we write 
them in a slightly different form.
  
  Let $\el = (\ell_j)_{j = 1}^{d}$ be a family of holomorphic
  maps  $\ell_j: \dd\to  \ell_j(\dd)\Subset \dd$. Since the $\ell_j$ contract the Poincaré metric they define an IFS whose {\em limit set} is 
  $$E =  \bigcap_{n\geq 0}\bigcup_{\abs {J} = n} \ell_{j_n}\circ \cdots \ell_{j_1} (\dd)$$ 
  (we use the classical multi-index notation $J = (j_1, \ldots, j_n)$). We say that the IFS  $\el$  satisfies the {\em covering property} on an open set $\Delta\subset \dd$ if   $\bigcup_{j=1}^d \ell_j(\Delta)\supset \overline \Delta$.  It then follows that $\Delta\subset E$ and   
   this property obviously persists for any small ($C^1$) 
  perturbation of $(\ell_j)$.
  
  \medskip
  
 Our first model situation concerns an  IFS with $d\geq 3$ branches roughly directed by $d^{\rm th}$ roots of unity. 
 
 \begin{lem}\label{lem:IFS}
 Let  $d\geq 3$ and $\el = (\ell_j)_{j=1}^{d}$ be the IFS generated by the affine maps $$\ell_j: z\mapsto m z + \alpha_j (1-\abs{m})
  e^{\frac{2\pi i }{d}j},$$ where $0.98<\abs{m} <1$ and 
  $  \alpha_j $ is such that 
  $\frac35<\abs{\alpha_j}<1$ and $\abs{\arg{\alpha_j}}< \frac{\pi}{20}$ 
  Then $\el$ satisfies the covering property on the disk $D\lrpar{0, \frac{1}{10}}$.
 \end{lem}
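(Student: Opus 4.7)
My plan is to establish the covering property directly: given any $z \in \overline{D(0, 1/10)}$, I will exhibit an index $j$ such that $z \in \ell_j(D(0, 1/10))$. To set up the geometry, I first observe that since each $\ell_j$ is affine with linear part $m$, the image $\ell_j(D(0, 1/10))$ is the Euclidean disk $D(c_j, |m|/10)$ centered at $c_j := \alpha_j(1-|m|)e^{2\pi i j/d}$. Writing $\delta := 1 - |m| \in (0, 0.02)$, the centers satisfy $|c_j| = |\alpha_j|\delta \in (3\delta/5, \delta)$, while $\arg c_j$ differs from $2\pi j/d$ by at most $\pi/20$. The case $z = 0$ is immediate, since $|c_j| < \delta < |m|/10$.

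For a nonzero $z$, the first step is an angular selection: I choose $j$ minimizing $|\arg z - 2\pi j/d|$ modulo $2\pi$, which forces $|\arg z - 2\pi j/d| \leq \pi/d$. Combined with the sector constraint on $\arg \alpha_j$, this yields
\begin{equation*}
|\arg z - \arg c_j| \leq \pi/d + \pi/20 \leq \pi/3 + \pi/20 = 23\pi/60
\end{equation*}
because $d \geq 3$. The key elementary estimate is then $\cos(23\pi/60) > 0.35$.

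The second step is the verification that $|z - c_j| \leq |m|/10$ for this choice of $j$. By the law of cosines, $|z - c_j|^2 = r^2 + |c_j|^2 - 2r|c_j|\cos(\arg z - \arg c_j)$ where $r := |z|$. Viewed as a parabola in $r$, it is minimized at $r = |c_j|\cos(\arg z - \arg c_j) < \delta < 1/10$, so its maximum on $[0, 1/10]$ is attained at the endpoint $r = 1/10$. It therefore suffices to check
\begin{equation*}
\tfrac{1}{100} + |c_j|^2 - \tfrac{|c_j|}{5}\cos(\arg z - \arg c_j) \leq \tfrac{(1-\delta)^2}{100}.
\end{equation*}
Substituting $|c_j| \geq 3\delta/5$, $|c_j| \leq \delta$, and the cosine lower bound, this rearranges to a linear inequality of the form $0.99\,\delta \leq 0.022$, which holds since $\delta < 0.02$.

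The main delicacy is that the estimate is nearly tight in the critical case $d = 3$, where $\cos(69^\circ) \approx 0.358$ leaves only a narrow margin; the numerical restrictions in the hypotheses ($|m| > 0.98$, $|\arg \alpha_j| < \pi/20$, $|\alpha_j| > 3/5$) are precisely tuned to make this trigonometric inequality work. For $d \geq 4$ the analogous bounds have considerable slack, so no conceptual difficulty arises beyond careful bookkeeping of the constants.
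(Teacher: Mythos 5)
Your proof is correct and follows essentially the same approach as the paper: pick $j$ so that $2\pi j/d$ is angularly closest to $\arg z$, then verify $z\in\ell_j(D(0,1/10))$ (equivalently $\ell_j^{-1}(z)\in D(0,1/10)$) via the law of cosines and the numerical constraints on $|m|$, $|\alpha_j|$, and $\arg\alpha_j$. The only cosmetic differences are that you bound $\cos(23\pi/60)$ by $0.35$ while the paper uses $1/3$, and you settle the quadratic in $\rho=|z|$ by evaluating it at the endpoint $\rho=1/10$, whereas the paper rearranges the inequality and divides through; both reductions are equivalent.
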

 
  \begin{proof}
Since $\abs{\alpha}<1$ we have that $\ell_j(\dd)\Subset \dd$ so we are in the above situation. Let now $z$ in $\overline D\lrpar{0, 1/10}$, 
we have to prove  that 
$\ell_j^{-1}(z)\in  D\lrpar{0, 1/10}$ for some $j$. We will show that if $\arg(z)\in  [-\pi/3, \pi/3]$ then $j=d$ (or equivalently $j=0$) is 
convenient. Then the other cases follow, since the unit circle is covered by translates of $ [-\pi/3, \pi/3]$ by $d^{\rm th}$ roots of unity. 

We compute $\ell_0^{-1}(z ) = \frac1m \lrpar{z- \alpha_0(1-\abs{m})}$. We have to  show that for $z = \rho e^{i\theta}$, with 
$\rho \leq \unsur{10}$ and $\abs{\theta}\leq \frac{\pi}{3}$, then $\abs{\ell_0^{-1}(z )} < \frac1{10}$, or equivalently 
$
\abs{z- \alpha_0(1-\abs{m})} < \frac{\abs{m}}{10}$. Now 
\begin{align*}
\abs{z- \alpha_0(1-m)}^2  &=  \rho^2 + \abs{\alpha_0}^2 (1-\abs{m})^2 - 2 \Re\lrpar{z \overline{  \alpha_0}(1-\abs{m})}\\
&\leq \rho^2 + (1-\abs{m})^2 - 2\rho \abs{\alpha_0} \cos(\arg(z) - \arg(\alpha_0))(1-\abs{m})\\
&\leq \rho^2 + (1-\abs{m})^2 - \frac25\rho(1-\abs{m})   
\end{align*}
where in last line we use $\abs{\alpha_0}>\frac35$ and $\cos(\arg(z) - \arg(\alpha_0))\geq \frac13 $.
So we are left to showing that 
$$\rho^2 + (1-\abs{m})^2 - \frac25\rho(1-\abs{m})  < \frac{\abs{m}^2}{10^2}, \text{ for } \rho\leq \unsur{10} \text{ and } 0.98<\abs{m} <1. $$ For the sake of this computation, put $1-\abs{m} = \delta< \frac{1}{50}$. The previous equation rewrites as 
$$\rho^2 + \delta^2 - \frac25 \rho\delta < \frac{(1-\delta)^2}{10^2} \Leftrightarrow \rho \lrpar{\rho - \frac25 \delta} < 
\frac{(1-\delta)^2}{10^2} -\delta^2. $$ Since $\rho\leq1/10$, to prove the last inequality it is enough to show that 
$$ {\rho - \frac25 \delta} < \frac{(1-\delta)^2}{10 } - 10 \delta^2
\Leftrightarrow \rho - \frac{1}{10} < \frac{1}{5} \delta - \frac{99}{10} \delta^2 = \frac{1}{5}  \delta \lrpar{ 1 -\frac{99}{2} \delta},$$
and the latter is true because the left hand side is non-positive  while the right hand side is positive.
  \end{proof}

\begin{rmk} \label{rmk:rotation}
Notice that conjugating by a rotation 
implies that for every fixed $\theta$, 
 the same result holds when $\alpha_j (1-\abs{m})$ is replaced by $e^{i\theta}\alpha_j (1-\abs{m})$ in the expression of $\ell_j$. 
\end{rmk}

  In view of 2-dimensional applications, it is useful to point out a slight reformulation  of
   this result.   Let $A$ be the range of allowed values for $\alpha$ in Lemma \ref{lem:IFS}, i.e. 
   \begin{equation} \label{eq:A}
   A = \set{\alpha \in \cc, \ \frac35<\abs{\alpha}< 1, \ \abs{\arg(\alpha)} < \frac{\pi}{20}}. 
   \end{equation} 
  
   \begin{lem}\label{lem:IFS variant}
 Let  $d\geq 3$ and $\el = (\ell_j)_{j=1}^{d}$ be the IFS generated by the affine maps $$\ell_j: z\mapsto m z + \alpha_j (1-\abs{m})
  e^{\frac{2\pi i }{d}j}. $$ where $0.98<\abs{m} <1$. Fix $\eta>0$ and assume that for every  $j = 1, \ldots , d$, the disk 
  $D(\alpha_j, \eta)$ is contained in $A$. 
  
 Then for every $z_0\in D\lrpar{0, \unsur{10}}$, there exists $j$ such that 
 $$\ell_j^{-1}(  D(z_0, \eta (1-\abs{m})))\subset D\lrpar{0, \unsur{10}}.$$
 \end{lem}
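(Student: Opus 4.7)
The plan is to deduce this from Lemma \ref{lem:IFS} via a simple translation-to-perturbation trick: a small translation of a target point corresponds to a small perturbation of the coefficient $\alpha_j$, which by hypothesis stays inside the allowed range $A$.

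\medskip

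The key observation is the algebraic identity: for any $\delta\in\cc$ and any $j\in\{1,\dots,d\}$,
\[
\ell_j^{-1}(z_0+\delta)=\unsur{m}\lrpar{z_0+\delta-\alpha_j(1-\abs{m})e^{2\pi ij/d}}=\widetilde{\ell}_j^{-1}(z_0),
\]
where $\widetilde{\ell}_j(z):=mz+\widetilde{\alpha}_j(1-\abs{m})e^{2\pi ij/d}$ and $\widetilde{\alpha}_j:=\alpha_j-\delta\, e^{-2\pi ij/d}/(1-\abs{m})$. When $\abs{\delta}\leq\eta(1-\abs{m})$ this yields $\abs{\widetilde{\alpha}_j-\alpha_j}\leq\eta$, so the hypothesis $D(\alpha_j,\eta)\subset A$ forces $\widetilde{\alpha}_j\in A$. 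Next, given $z_0\in D(0,1/10)$, I would choose $j$ exactly as in the proof of Lemma \ref{lem:IFS}, namely so that $\arg(z_0)$ lies within $\pi/3$ of $2\pi j/d$ (possible since $d\geq 3$; the case $z_0=0$ is trivial and any $j$ works). The computation in the proof of Lemma \ref{lem:IFS} shows that for \emph{any} coefficient $\widetilde{\alpha}_j\in A$ one has $\widetilde{\ell}_j^{-1}(z_0)\in D(0,1/10)$. Since this $j$ depends only on $\arg(z_0)$ and not on $\delta$, the very same $j$ works uniformly for all $\delta$ with $\abs{\delta}\leq\eta(1-\abs{m})$, which gives $\ell_j^{-1}(D(z_0,\eta(1-\abs{m})))\subset D(0,1/10)$.

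\medskip

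There is no genuine obstacle: the entire content is the identity $\ell_j^{-1}(z_0+\delta)=\widetilde{\ell}_j^{-1}(z_0)$, after which one reads off the conclusion from Lemma \ref{lem:IFS}. The only point worth double-checking is that the bound $\abs{\delta}\leq\eta(1-\abs{m})$ is calibrated precisely so the perturbed coefficient $\widetilde{\alpha}_j$ stays in the disk $D(\alpha_j,\eta)\subset A$, ensuring that the quantitative estimates from the previous lemma apply verbatim.
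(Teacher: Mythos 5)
Your proof is correct and is essentially the paper's argument: both reduce to the fact that the estimate in Lemma \ref{lem:IFS} is uniform over $\alpha\in A$ for each $z$ in the relevant sector, and both exploit that a translation of the target by $\delta$ with $\abs{\delta}<\eta(1-\abs{m})$ corresponds exactly to a perturbation of $\alpha_j$ by less than $\eta$, which the hypothesis $D(\alpha_j,\eta)\subset A$ absorbs. The paper phrases this as ``$\ell_j^{-1}(z)$ lands $\eta(1-\abs{m})/\abs{m}$-deep inside $D(0,1/10)$ when $\alpha_j$ is $\eta$-deep inside $A$,'' while you use the equivalent pointwise identity $\ell_j^{-1}(z_0+\delta)=\widetilde{\ell}_j^{-1}(z_0)$; the content is the same.
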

  
\begin{proof}  
Notice that when $\eta$ tends to $0$ this is precisely
 the statement of Lemma \ref{lem:IFS}. Now for the general case, simply observe that for every $z$,
 when $\alpha$ varies in a disk of radius $\eta$,  $\ell_j^{-1}(z)$ ranges in a disk of radius $\eta\frac{1-\abs{m}}{m}$. 
For $\eta >0$, put 
   $A_{\eta} = \set{\alpha \in A,  D(\alpha, \eta))\subset A}$ (beware that this is the opposite of a $\eta$-neighborhood). 
If for some $z$, 
 $$\ell_j^{-1} (z)\in D\lrpar{0, \frac1{10}} \text{  for every }\alpha\in A $$ (this is what we have proved for $z$ in an angular sector of width $\frac{2\pi}{3}$
 and $j = 0$) then 
 $$\ell_j^{-1} (z)\in D\lrpar{0, \frac1{10} - \eta\frac{1-\abs{m}}{m}} \text{  for every }\alpha\in A_\eta .$$ Now since 
 $\ell_j\inv (D(z,r))  = D\lrpar{z, \frac{r}{m}}$, we infer that 
 for $\alpha\in A_\eta$, $\ell_j\inv (D(z,\eta(1-\abs{m}))) \subset D\lrpar{0, \frac1{10}}$, 
which was the result to be proved
\end{proof}

 For an IFS $\el$, we define $\el^n$ to the the IFS generated by $n$-fold compositions of the generators of $\el$. It has the same limit set as $\el$.  
   Here is an analogue of Lemma \ref{lem:IFS} for an IFS with two branches. 
   In this case   the multiplier needs to be chosen close to the imaginary axis. 
  
  \begin{lem}\label{lem:IFS2}
  Let $\el = {\ell_\pm}$ be the IFS generated by the affine maps 
  $$\ell_\pm:z\mapsto m z \pm \alpha (1-\abs{m}), $$
  where $0.99<\abs{m} < 1$, $\abs{\arg(m)- \frac{\pi}{2}}<\frac{\pi}{50}$ and $\alpha$ is a complex number with $0.9<\abs{\alpha}<1$.  
   Then $\el^2$ satisfies the covering property on the disk $D\lrpar{0, \frac{1}{10}}$.
  \end{lem}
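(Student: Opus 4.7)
The plan is to reduce the $d=2$ case to the already-established $d=4$ statement of Lemma \ref{lem:IFS}, by viewing the second iterate $\el^2$ as an IFS with four branches directed roughly by fourth roots of unity.

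First I would compute the compositions explicitly: for $\epsilon_1,\epsilon_2\in\{+,-\}$,
\[
\ell_{\epsilon_1}\circ\ell_{\epsilon_2}(z) = m^2 z + \alpha(1-|m|)(\epsilon_1 + \epsilon_2 m),
\]
so $\el^2$ has common multiplier $m' = m^2$, with $|m'| = |m|^2 > 0.99^2 > 0.98$, and four translations $\pm\alpha(1-|m|)(1\pm m)$. Using the factorization $1-|m^2| = (1-|m|)(1+|m|)$, I would match each translation to a fourth root of unity $e^{i\pi j/2}$ ($j=0,1,2,3$), writing the translation of the $j$-th branch as $\beta_j(1-|m^2|) e^{i\pi j/2}$ with
\[
\beta_0 = \beta_2 = \frac{\alpha(1+m)}{1+|m|}, \qquad \beta_1 = \beta_3 = \frac{i\,\alpha(1-m)}{1+|m|}.
\]

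Next, I would apply the rotation freedom (Remark \ref{rmk:rotation}) with $\theta = \arg(\beta_0)$, and verify that the rotated coefficients $\tilde\beta_j = e^{-i\theta}\beta_j$ all lie in the set $A$ of \eqref{eq:A}. For the moduli, $|1\pm m|^2 = 1 \pm 2|m|\cos(\arg m) + |m|^2$; since $|\cos(\arg m)| < \sin(\pi/50) < 0.063$ and $|m|\in(0.99,1)$, a direct estimate gives $|1\pm m|/(1+|m|) \in (0.68, 0.74)$, hence $|\tilde\beta_j| \in (0.9\cdot 0.68,\, 0.74) \subset (3/5, 1)$. The delicate point is the argument bound: $\tilde\beta_0$ is real positive by choice of $\theta$, and a short calculation using $(1+m)(1-\bar m) = (1-|m|^2) + 2i\,\Im(m)$ gives
\[
\arg(\tilde\beta_1) \;=\; \frac{\pi}{2} - \bigl(\arg(1+m) - \arg(1-m)\bigr) \;=\; \frac{\pi}{2} - \arctan\!\left(\frac{2|m|\sin(\arg m)}{1-|m|^2}\right).
\]
Under the hypotheses $|m|>0.99$ and $|\arg m - \pi/2| < \pi/50$, the argument of the arctangent exceeds $2\cdot 0.99 \cdot 0.998 / (1-0.99^2) > 99$, so $|\arg(\tilde\beta_1)| < 1/99 < \pi/20$.

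Once the four $\tilde\beta_j$ are verified to lie in $A$, applying Lemma \ref{lem:IFS} (with $d=4$, multiplier $m^2$, rotation by $\theta$) yields the covering property of $\el^2$ on $D(0, 1/10)$. The main obstacle is the argument estimate for $\tilde\beta_1$: it is precisely this estimate that forces the hypotheses $|m|>0.99$ (slightly sharper than the $0.98$ needed in Lemma \ref{lem:IFS}, because we effectively need $1-|m|^2 \ll 1$) and $\arg m$ close to $\pi/2$ (so that $1+m$ and $1-m$ are almost orthogonal, matching the fourth-root-of-unity structure).
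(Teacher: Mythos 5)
Your proposal is correct and follows essentially the same route as the paper: both compute the four compositions of $\el^2$, recognize them as a $d=4$ instance of Lemma \ref{lem:IFS} with multiplier $m^2$ and translations $\beta_j(1-|m|^2)e^{i\pi j/2}$, and verify after a rotation that the coefficients lie in $A$ (the paper rotates by exactly $\pi/4$ and estimates $\arg(1\pm m)$ individually, while you rotate by $\arg\beta_0$ and bound the difference $\arg(1+m)-\arg(1-m)$ via $(1+m)(1-\bar m)$ — an equivalent computation). Your quantitative estimates check out.
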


  \begin{proof}
To establish the result,  we show that 
$\el^2 = (\ell_+^2, \ell_+\circ \ell_-, \ell_-\circ \ell_+, \ell_-^2)$ 
  satisfies the assumptions of Lemma \ref{lem:IFS} (up to conjugating by an appropriate rotation). Indeed, 
  observe first that conjugating by a rotation, we can assume that $\alpha$ is a positive real number. 
Now  let us 
   compute the expression  of $\ell_+^2$: 
   $$\ell^2_+(z) = m^2 z + (m+1)\alpha (1-\abs{m}) = m^2 z+ \beta_{ ++} (1- \abs{m}^2), \text{ where } \beta_{++}  =  \frac{(m+1)\alpha}{\abs{m}+1} .
   $$
Observe first that  $0.98< \abs{m}^2 < 1$.
Next,  we see that $\beta_{++}$ is approximately equal to $\frac{1+i}{2}\alpha$. More precisely, given the assumptions on  $m$
 it can be shown that:
  \begin{itemize} 
  \item    $1.8<\abs{m+1}^2<2.2$, therefore $ 0.6< {\frac{\abs{(m+1)\alpha}}{\abs{m}+1}  }< 0.8$; 
  \item $\abs{\arg{(m+1)} - \frac{\pi}{4}} < \frac{\pi}{20}$.
  \end{itemize}
  For $\ell_+\circ \ell_-$,  $\ell_-\circ \ell_+$ and $ \ell_-^2$, we have analogous estimates, with $\frac{\pi}{4}$ replaced by $-\frac{\pi}{4}$, 
  $\frac{3\pi}{4}$ and $\frac{5\pi}{4}$ respectively. Thus, conjugating by a rotation of angle $\frac{\pi}{4}$,
   we are in position to 
  apply Lemma \ref{lem:IFS}, and the result follows.
  \end{proof}

%
%
%

\subsection{Blenders in $\cd$}

We now study a 2-dimensional version of the phenomenon studied in \S\ref{subs:IFS}.
Similarly to \eqref{eq:A} we define the angular sector 
\begin{equation}\label{eq:A'}
  A' = \set{\alpha \in \cc, \  0.7<\abs{\alpha}< 0.9, \ \abs{\arg(\alpha)} < \frac{\pi}{40}}.  
  \end{equation}
It is easily shown  that for every $\alpha\in A'$, the disk 
 $D\big(\alpha,  \unsur{40}\big)$ is contained in  $A$.

\begin{lem}\label{lem:IFSC2}
Let $d\geq 3$ and $\el = (L_j)_{j=1}^d$ be an IFS in $\dd^2$ generated by biholomorphic
 contractions of the form 
$$ L_j(z,w) = (\ell_j(z), \varphi_j(z,w)),$$ and let $\mathcal E$ be its limit set. 
Assume that  $\ell_j$ is of the form
$\ell_j: z\mapsto m z + \alpha_j (1-\abs{m})
  e^{\frac{2\pi i }{d}j},$ where $0.98<\abs{m} <1$ and  $\alpha_j\in A'$, 
   and $\varphi_j:\dd^2\cv\dd$ is a holomorphic map 
 such that $$\abs{\frac{\fr \varphi_j}{\fr z}} < 1 \text{ and } \abs{\frac{\fr \varphi_j}{\fr w}} < \frac12 .$$
 Then    any  
 vertical graph $\Gamma$  
 intersecting $D(0, \unsur{10})\times \dd$,  with slope bounded by $\frac{1}{100}(1-\abs{m})$ must intersect $\mathcal{E}$,  that is
   $\Gamma\cap \E\neq \emptyset$.

Furthermore, the same holds for any IFS $\overline{\el}$
 generated by $(\overline  L_j)_{j=1}^d$, whenever 
 \begin{equation}\label{eq:C1estimate}
 \norm{L_j - \overline L_j}_{C^1 }< \frac{1}{1000}(1-\abs{m}).
 \end{equation}
\end{lem}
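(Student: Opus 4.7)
The plan is to construct a point of $\Gamma\cap\mathcal E$ as the limit of a nested sequence of IFS cylinders of contracting diameter, with the choice of cylinder at each step dictated by the planar covering property of Lemma~\ref{lem:IFS variant}. Reading ``vertical graph'' as $\Gamma = \{(h(w), w) : w\in\dd\}$ with $|h'(w)|\leq\sigma := \tfrac{1-\abs{m}}{100}$---which is natural since the strongly contracting direction of the IFS is $w$---the first step is a cone invariance estimate. Differentiating the implicit relation $\varphi_j(Z,W)=w$ that defines the $w$-component of $L_j^{-1}$, one finds that any vertical graph meeting $L_j(\dd^2)$ pulls back under $L_j^{-1}$ to a vertical graph whose slope at the corresponding point is
\[
\tilde s \;=\; \frac{s\,\partial_w\varphi_j}{m - s\,\partial_z\varphi_j},\qquad s=h'(w).
\]
Plugging in $\abs{\partial_z\varphi_j}<1$, $\abs{\partial_w\varphi_j}<1/2$, $\abs{m}>0.98$ and $\abs{s}\leq\sigma$ yields $\abs{\tilde s}\leq \tfrac{\sigma/2}{\abs{m}-\sigma}$, which is comfortably below $\sigma$. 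Hence the class of vertical graphs with slope $\leq\sigma$ is preserved by pullback under any $L_j^{-1}$, with a safety factor of about $2$ in the bound.

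With cone invariance in hand I inductively construct a sequence of indices $(j_n)_{n\geq 1}$ together with pulled-back graphs
\[
\widetilde{\Gamma}_n \;:=\; L_{j_n}^{-1}\circ\cdots\circ L_{j_1}^{-1}\bigl(\Gamma\cap C_n\bigr),\qquad C_n := L_{j_1}\circ\cdots\circ L_{j_n}(\dd^2),
\]
maintaining the invariant that $\widetilde{\Gamma}_n$ is a vertical graph of slope $\leq\sigma$ meeting $D(0,1/10)\times\dd$. To pass from step $n$ to step $n+1$, pick a point of $\widetilde{\Gamma}_n$ with first coordinate $z_n\in D(0,1/10)$: the slope bound forces the entire $z$-projection of $\widetilde{\Gamma}_n$ into $D(z_n, 2\sigma)\subset D\bigl(z_n,\tfrac{1-\abs{m}}{40}\bigr)$. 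Since $\alpha_j\in A'$ guarantees $D(\alpha_j, 1/40)\subset A$, Lemma~\ref{lem:IFS variant} (with $\eta=1/40$) supplies an index $j_{n+1}$ for which $\ell_{j_{n+1}}^{-1}$ maps this disk into $D(0,1/10)$; cone invariance then produces $\widetilde{\Gamma}_{n+1}$ with all required properties. The cylinders $C_n$ shrink in diameter by at least $\max(\abs{m},1/2)$ per step, so $\bigcap_n C_n$ is a single point of $\mathcal E$; and since $\Gamma\cap C_n$ is nonempty and closed for every $n$, this limit point lies in $\Gamma\cap\mathcal E$.

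The robustness assertion follows by running the same argument verbatim with the perturbed maps $\overline L_j$: the cone estimate and both Lemmas~\ref{lem:IFS} and~\ref{lem:IFS variant} are strict inequalities with ample slack, which a $C^1$-perturbation of size $\tfrac{1-\abs{m}}{1000}$ cannot spoil. The main technical obstacle in the induction is verifying that $\widetilde{\Gamma}_n$ actually meets the next cylinder $L_{j_{n+1}}(\dd^2)$ in a nonempty set so that the pullback is well-defined: the $z$-projection of $\widetilde{\Gamma}_n$ is arranged to lie inside $\ell_{j_{n+1}}(\dd)$ by the choice of $j_{n+1}$, but one must additionally check that over some such $z$ the $w$-coordinate of $\widetilde{\Gamma}_n$ falls into the image $\varphi_{j_{n+1}}(\ell_{j_{n+1}}^{-1}(z),\dd)$. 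This relies on the biholomorphism of each $L_j$ (which forces $\abs{\partial_w\varphi_j}$ to be bounded below on compact sub-domains, so that each $w$-fiber of $L_j(\dd^2)$ is a disk of substantial size) together with the near-verticality of $\Gamma$, so that the $w$-range of $\widetilde\Gamma_n$ sweeps through enough of $\dd$ to meet the fiber of $L_{j_{n+1}}$.
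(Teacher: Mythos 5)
Your overall strategy — iterated pullback of the graph using cone invariance in the slope plus the planar covering property of Lemma~\ref{lem:IFS variant} — is exactly the paper's, and your cone computation $\tilde s = \frac{s\,\partial_w\varphi_j}{m - s\,\partial_z\varphi_j}$ is correct (the paper derives the same estimate, with the perturbation terms added). Your choice of $\eta = 1/40$ also matches the role of the sector $A'$.

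However, there is a genuine gap where you already sense trouble, and your attempted fix does not work. You end the argument with: ``one must additionally check that over some such $z$ the $w$-coordinate of $\widetilde\Gamma_n$ falls into the image $\varphi_{j_{n+1}}(\ell_{j_{n+1}}^{-1}(z),\dd)$,'' and you propose to resolve this by noting that biholomorphy forces $|\partial_w\varphi_j|$ to be bounded below so that the $w$-fibers of $L_j(\dd^2)$ are ``disks of substantial size.'' This is not available: the hypotheses give only the upper bound $|\partial_w\varphi_j|<\tfrac12$, and no quantitative lower bound (a lower bound is automatic for a fixed IFS by compactness, but it would have to be uniform for the $C^1$-robustness claim, and it is simply not in the statement). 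Moreover, even with such a bound, knowing the fiber is large does not show the $w$-range of $\widetilde\Gamma_n$ actually meets it.

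The paper closes this gap with a stronger inductive invariant that you should adopt. Writing the implicit equation for the pullback,
\[
z \;=\; \ell_j^{-1}\!\bigl(\gamma(\varphi_j(z,w)+\e_2(z,w)) - \e_1(z,w)\bigr),
\]
one observes that for each fixed $w\in\dd$ the right-hand side is a contraction in $z$ (its $z$-derivative is $<1$) which, by the choice of $j$ via Lemma~\ref{lem:IFS variant}, maps into $D(0,\tfrac1{10})$. Hence there is a unique solution $z=\gamma_1(w)$ for every $w\in\dd$. In other words, $\overline L_j^{-1}(\Gamma)\cap\dd^2$ is a vertical graph over the \emph{entire} disk in the $w$-variable, with $z$-range inside $D(0,\tfrac1{10})$, not merely a piece of graph that ``meets'' $D(0,\tfrac1{10})\times\dd$. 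With this stronger invariant the nonemptiness of the next intersection is automatic and the fiber-size argument (and the missing lower bound on $\partial_w\varphi_j$) is not needed. Strengthen your invariant accordingly and replace the last paragraph with this fixed-point argument.
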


Note that  by the Cauchy estimates a control on the $C^0$ norm of ${L_j - \overline L_j}$ in a slightly larger domain is enough to achieve \eqref{eq:C1estimate}. 

\begin{proof}
Let us    work directly with 
 the  perturbed situation. Let $\mathcal{G}$  be the set of vertical graphs 
$\Gamma$   in $\dd^2$ of the form $z= \gamma(w)$ with     $\abs{\gamma'}<
\unsur{100}(1-\abs{m})$ and $\abs{\gamma(z_0)}<\unsur{10}$ for some $z_0\in \dd$.  

We want to show that if $\Gamma \in \mathcal G$ then 
 $\Gamma\cap \E (\overline\el) \neq \emptyset$. For this we must 
  prove  that there exists an infinite sequence $(j_k)$ 
such that\footnote{Notice the order of compositions.} 
$\Gamma\cap \lrpar{ \overline L_{j_1}\circ \cdots \circ\overline L_{j_k} (\dd^2) } \neq \emptyset$,
 or in other words, that $\overline L_{j_k}\inv \circ\cdots \circ\overline L_{j_1}^{-1}$ is well-defined 
on some piece of $\Gamma$. 

Arguing by induction, to establish this property it is enough to show that if $\Gamma\in \mathcal G$ there exists $j\in \set{1, \ldots , d}$ such that 
$\overline L_j\inv(\Gamma)\cap{\dd^2} \in \mathcal G$. 
Write $\overline L_j = L_j+ \e_j$. 
For this, observe first that the assumption on the slope implies that 
the diameter of the  first projection of $\Gamma$ is smaller than $\unsur{50}(1-\abs{m})$, hence 
 contained in a disk $D\lrpar{z_0, \unsur{50}(1-\abs{m})}$ for some $z_0\in  D\lrpar{0, \unsur{10}}$. Let $j\in  \set{1, \ldots , d}$ be 
 as provided by Lemma \ref{lem:IFS variant} for this value of $z_0$,
  and consider $\overline L_j\inv (\Gamma)$ (for notational ease we drop the restriction to $\dd^2$). 
 This is a subvariety contained in $\overline L_j\inv \lrpar{D\lrpar{z_0, \unsur{50}(1-\abs{m})} \times \dd}$, which, by 
 Lemma \ref{lem:IFS variant} and the bound on $\norm{\e_j}_{C^0}$ is contained in  $D\lrpar{0, \unsur{10}}\times \dd$. 

The  equation of $\overline L_j\inv (\Gamma)$ is of of the form 
\begin{equation}\label{eq:implicit}
\ell_j(z) +\e_1(z,w)=  \gamma\lrpar{\varphi_j(z,w)+\e_2(z,w)} 
\end{equation}
which rewrites as 
 \begin{equation}\label{eq:implicit2}
  z=   \ell_j\inv \lrpar{\gamma\lrpar{\varphi_j(z,w)+\e_2(z,w)} - \e_1(z,w)}.
  \end{equation}
The $z$-derivative of the right hand side of this equation is smaller than 1 so   the contraction mapping principle 
  tells us that it has a unique solution for each $w$. In other words  
    $\overline L_j\inv (\Gamma)$ is a vertical graph. 
    Finally, the implicit function theorem applied to \eqref{eq:implicit} 
    implies
      that the slope of this graph  
         is bounded  by 
         $$\frac{ \abs{\gamma'} \lrpar{\unsur{2} + \abs{\frac{\fr\e_2}{\fr w}}}+ \abs{\frac{\fr\e_1}{\fr w}}}
         {\abs{m} - \abs{\gamma'} \lrpar{1+ \abs{\frac{\fr\e_2}{\fr z}}}- \abs{\frac{\fr\e_1}{\fr z}}}  < \unsur{100}(1-\abs{m}) $$
                  and we are done.
    \end{proof}

\begin{rmk} \label{rmk:blender2}
A similar result holds for an IFS with two branches of the form $(\ell_\pm( z), \varphi_\pm(z,w)$, for $\ell_\pm(z) = mz \pm  \alpha(1-\abs{m})$
and  $m$ close to the imaginary axis,  as  in Lemma \ref{lem:IFS2}. 
Indeed, exactly as   in the proof of that lemma,  it is enough to take two iterates of the IFS to get back to the setting of Lemma \ref{lem:IFSC2}.
\end{rmk}

\subsection{Blenders  for endomorphisms of $\pd$ and the post-critical set}

\begin{thm}\label{thm:blender}
Let $d\geq 3$ and  $f:\pd\cv\pd$ be a product map of the form $$f(z,w)  = (p(z), q(w))=  (p(z), w^d+ \kappa),$$ with $\deg(p) =d$. 
 Assume that  the polynomial $p$
belongs to the bifurcation locus in $\poly_d$ and 
admits  a repelling fixed point $z_0$  of 
low multiplier:  $1<\abs{p'(z_0)} <1.01$. 

 Then there exists a constant $\kappa_0$ depending only on $d$ such that 
 if $\abs{\kappa} > \kappa_0$,    $f$ belongs to the closure of the interior of 
the bifurcation locus in $\hold(\pd)$. 
\end{thm}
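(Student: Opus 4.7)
My plan is to apply Proposition-Definition~\ref{propdef:basic}: the goal is to produce, near a small perturbation $\tilde f_\e\in\hold(\pd)$ of $f$, a basic hyperbolic repeller $E\subset J^*(\tilde f_\e)$ which is a blender in the sense of Lemma~\ref{lem:IFSC2}, together with a post-critical component meeting $E$ at a proper intersection (Definition~\ref{def:proper}); robustness of both the blender and the intersection under $C^1$-small perturbations then makes $\tilde f_\e$ lie in the interior of $\bif$. The relevant fixed point is $(z_0,w_0)$, where $w_0$ is any of the $d$ fixed points of $q(w)=w^d+\kappa$. Its local multipliers are $m=p'(z_0)$, just above~$1$ (the \emph{slow} direction), and $\mu=q'(w_0)=d w_0^{d-1}$, of modulus comparable to $|\kappa|^{(d-1)/d}$ (huge for $|\kappa|$ large, the \emph{fast} direction). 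This is exactly the setup Lemma~\ref{lem:IFSC2} is designed to handle.

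To construct the blender I would consider perturbations of $f$ within $\hold(\pd)$ of the form
\[ f_\e(z,w) = \bigl(p(z)+\e\, h(z,w),\, w^d+\kappa\bigr), \]
with $\e\in\cc$ small and $h$ a polynomial to be chosen. The $d$ global inverse branches $\eta_1,\ldots,\eta_d$ of $q$ near $w_0$ land at the $d$ preimages $w_j$ of $w_0$, asymptotically arranged around the circle of radius $|\kappa|^{1/d}$ at arguments close to $2\pi j/d$. Composing with the local inverse of $p$ at $z_0$ yields $d$ natural inverse branches $L_j^\e$ of $f_\e$ near $(z_0,w_0)$. In coordinates $\zeta=z-z_0$ and $\omega=w-w_0$, a direct linearization shows that the $\zeta$-component of $L_j^\e$ has the form $\zeta/m-(\e/m)\,c_j+O(\e^2+|\zeta|^2)$, where $c_j$ depends linearly on values of $h$ along the orbit of $w_0$ under $\eta_j$. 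I would choose $h$ so that the constants $c_j$ have the rotational structure $\alpha_j(1-|1/m|)e^{2\pi ij/d}$ prescribed by Lemma~\ref{lem:IFSC2}, with $\alpha_j$ in the admissible sector $A'$; the freedom coming from the $w_j$'s spanning the $d$-th roots of unity makes this a solvable linear system in the coefficients of $h$. After affinely rescaling $\zeta$ by $1/(1-|1/m|)$ and $\omega$ by $1/|\kappa|^{1/d}$, the IFS $(L_j^\e)$ satisfies the hypotheses of Lemma~\ref{lem:IFSC2}, with slow multiplier $1/m$ of modulus just below~$1$ and $\omega$-contraction of order $|\mu|^{-1}$, tiny provided $|\kappa|>\kappa_0$. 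The limit set descends to a basic repeller $E_\e$ of $f_\e$; since $(z_0,w_0)$ is repelling and hence lies in $J^*(f)$, Lemma~\ref{lem:continuation} ensures $E_\e\subset J^*(f_\e)$.

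For the post-critical intersection, I use that $p$ is in the bifurcation locus, so it has an active critical point $c$; exactly as in the proof of Theorem~\ref{thm:topology}, I can further perturb $p$ inside $\poly_d$ to arrange $p^k(c)=z_0$ for some $k\geq 1$. For the resulting perturbed product, the post-critical set then contains a near-vertical curve (a small deformation of $\{z_0\}\times\cc$), which in the rescaled blender coordinates is a graph of very small slope passing through (the image of) $(z_0,w_0)\in E_\e$ and meeting $D(0,1/10)\times\dd$, so Lemma~\ref{lem:IFSC2} applies. By its built-in $C^1$-robustness \eqref{eq:C1estimate}, every $f''\in\hold(\pd)$ sufficiently close to $\tilde f_\e$ inherits both a continuation of $E_\e$ and a continuation of the post-critical graph of small slope, which then still meets $E(f'')$. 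Properness of this intersection in the sense of Definition~\ref{def:proper} follows by exhibiting one direction in $\hold(\pd)$ (for instance, perturbing $p$ so that $p^k(c)\neq z_0$) along which the continuation of the fixed point $(z_0,w_0)$ leaves the continuation of the post-critical component. Proposition-Definition~\ref{propdef:basic} then puts $f''$ in $\bif$, so $\tilde f_\e$ lies in the interior of $\bif$; since the perturbation of $p$ and the parameter $\e$ can be taken arbitrarily small, $f$ lies in the closure of $\mathring{\bif}$.

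The main obstacle is the second step: tuning the perturbation $h$ together with the rescaling so that all the quantitative hypotheses of Lemma~\ref{lem:IFSC2} hold simultaneously. The $\alpha_j$ must land inside the narrow sector $A'$, while the off-diagonal derivatives $\partial_\zeta\varphi_j$ introduced by the perturbation stay below the threshold \eqref{eq:C1estimate}. This is exactly where the hypothesis that $|m|$ is very close to~$1$ becomes essential (otherwise the sector $A'$ is too tight for the rotated $c_j$ to fit in), and where $|\kappa|>\kappa_0$ is forced (so that the $\omega$-contraction dominates the off-diagonal noise produced by the perturbation); together these constraints pin down the value of $\kappa_0$.
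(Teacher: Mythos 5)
Your overall plan is the right one, and the blender construction is essentially the paper's (though overcomplicated: the simple perturbation $f_\e(z,w)=(p(z)+\e w,\,q(w))$ already does the job, with the rotational structure $e^{2\pi ij/d}$ coming for free from the $d$-th roots of $\kappa$; also the rescaling in $\zeta$ should be by $1/\delta$ with $\delta\sim|\e|\,|\kappa|^{1/d}/(1-|m|)$, not by $1/(1-|1/m|)$, so the blender scale shrinks with $\e$). The genuine gap is in the post-critical intersection. After perturbing $p$ so that $p^k(c)=z_0$, you claim that the post-critical curve $f_\e^k(\{c\}\times\cc)$ is ``a small deformation of $\{z_0\}\times\cc$'' and hence a near-flat vertical graph in the rescaled blender coordinates. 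This is correct for $k=1$ (Step 3.0 of the paper's proof, where the curve is explicit and one checks the slope bound by hand), but it fails for $k\geq 2$: the $\e$-perturbation at each step is amplified along the orbit segment $p(c),\ldots,p^{k-1}(c)$ by the derivatives of $p$ there, and the deviation of $f_\e^k(\{c\}\times\cc)$ from $\{z_0\}\times\cc$ can therefore be comparable to or much larger than the blender scale $\delta$. Moreover, one cannot always reduce to $k=1$: Montel gives a nearby $p$ with $p^k(c)=z_0$ for some $k$, not for $k=1$.

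The paper explicitly flags this difficulty (see the footnote in Step 3.1) and resolves it by a substantially different and harder route: first, Lemma \ref{lem:period3} (using the two-variable Rouch\'e theorem, Proposition \ref{prop:rouche}) produces a holomorphically moving fixed or period-3 point $x_\e$ located deep inside the blender; second, Proposition \ref{prop:briend duval} applied to the one-parameter family $(f_\e)$ yields a sequence $\e_j\cv 0$ at which $x_{\e_j}$ lies on a post-critical component; and third, Lemma \ref{lem:Wuu} (a graph transform argument) shows that the forward iterates $f_{\e_j}^{3n}$ of that post-critical component converge to the strong unstable manifold $W^{uu}(x_{\e_j})$, which \emph{is} a near-flat vertical graph with controlled slope, so Lemma \ref{lem:IFSC2} gives the robust intersection. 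Without this mechanism, or a substitute for it, the post-critical intersection step of your argument does not go through when $k\geq 2$.
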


It is easy to construct polynomials $p$ satisfying the assumptions of the theorem for every $d\geq 3$.   
A variant of this construction is given  in Theorem \ref{thm:blender2} below which allows to treat   the case $d=2$ as well. 
An interesting open question is whether  $z_0$ can be assumed to be periodic instead of fixed.  

%

\begin{proof} 
We translate the $z$ coordinate so that $z_0 = 0$, and put $m = (p'(0))\inv$, so that $m$ is a complex number such that  
$0.99< \abs{m} <1$.    
To fix the ideas we work in the case $d=3$, the adaptation to the general case is easy and left to the  
 reader.  
 Furthermore to ease notation we replace $\kappa$ by $-\kappa$ so that the second coordinate becomes $q(w)  = w^3- \kappa$.

\medskip

\noindent{\bf Step 1 :} analysis of $J_q$

When $\kappa$ is large, the Julia set of $q$ 
is a Cantor set; here we describe its geometry. 
Consider the disk $D_\kappa := D(0, 2\abs{\kappa}^{1/3})$. The critical point 0 satisfies $q(0) = \kappa\notin D_\kappa$
so $q^{-1}(D_\kappa)$ is the union of 3 topological disks, invariant under a rotation of angle $\frac{2\pi}{3}$.  
Fix a cube root $\kappa^{1/3}$ of $\kappa$. 
If $x\in D_\kappa$, solving the equation $w^3-\kappa =x$ yields
$$w^3 =\kappa + x = \kappa \lrpar{1+\frac{x}{\kappa}} \text{, hence } w 
=    {\kappa}^{1/3} \zeta \lrpar{1 +  \frac{x}{3\kappa} + O\lrpar{   {\abs{\kappa}^{-4/3}}} }$$
where $\zeta \in \set{1,j, j^2}$ ranges over the set of  cube roots of unity. Therefore we see that 
 $q^{-1}(D_\kappa)$ is approximately the union of 3 disks of radius $\frac{2}{3\abs{\kappa}^{1/3}}$ centered at the cube roots of $\kappa$. 
 Define $q_\zeta\inv$ to be the inverse branch of $q$  on $D_\kappa$ such that $q_\zeta\inv(0) = \zeta \kappa^{1/3}$.

We introduce the new coordinate $\tilde w = \unsur{2} \kappa^{-1/3} w$. After coordinate change, the expression of $q$ becomes $\tilde q (\tilde w) = \unsur{2} \kappa^{2/3}(8 \tilde w^3 - 1)$, so that for large enough $\kappa$, 
 $\tilde q\inv (\dd)$ is made of 3 components respectively 
 contained in $D\lrpar{\frac12 \zeta ,\frac12 \abs{\kappa}^{-2/3} }$, $\zeta\in \set{1,j, j^2}$. Similarly, from  the Cauchy estimate 
 we infer that  
 $|{(\tilde q_\zeta\inv)'}|\leq \abs{\kappa}^{-2/3}$ on $\dd$.

\medskip

\noindent{\bf Step 2 :} rescaling and construction of a blender.

Close to the origin, $p$ behaves like the multiplication by $p'(0)$, more precisely we have $p(z) = p'(0)z+ O(z^2)$. 
In particular there is a unique inverse branch $p_0\inv$ defined in a neighborhood of 0 of size $\delta_0(p)$ and such that 
$p_0\inv(0) = 0$.

For $0<\delta \ll \delta_0(p)$ we rescale the 
disk $D(0, \delta)$ to   unit size by introducing the new coordinate  $\tilde z = \delta\inv z$.  In the new coordinate, $p$ becomes 
$\tilde p (\cdot)= \delta\inv  p(\delta \cdot)$, thus 
$\tilde p(\tilde z)  =  p'(0) \tilde z+ O(\delta)$ where the $O(\cdot)$ is uniform for $z\in \dd$, and likewise for the inverse map 
$\tilde p_0\inv (\tilde z) = m \tilde z + O(\delta)$.  Similar results hold for the derivatives by applying the Cauchy estimates in a slightly larger disk.

In the new coordinate system $(\tilde z, \tilde w) = \lrpar{ {\delta\inv}z, \unsur{2}\kappa^{-1/3} w} =: \phi (z,w) $,
 we have an IFS  with 3 branches
 on $\dd^2$ defined by the $(\tilde p_0\inv, \tilde q_\zeta\inv)$ for   $\zeta \in \set{1,j, j^2}$. Its limit set $\tilde \E$ is equal to 
 $\set{0}\times J_q$. 
  Now we perturb $\tilde f$ by introducing 
 $$\tilde f_{_\bullet} (\tilde z, \tilde w) = (\tilde p(\tilde z)- 2 \alpha (1-\abs{m}) \tilde w , \tilde q(\tilde w)),$$
 where $\alpha$ is a real number in the interval $(0.7    , 0.9)$: once for all we choose 
 $\alpha=0.8$. We denote by $\tilde \E_{_\bullet}$ the corresponding limit set.
 Notice that in the original coordinates, the corresponding expression is 
 \begin{equation}\label{eq:feps}
 \phi\inv\circ \tilde f_{_\bullet} \circ \phi : (z,w) \mapsto \lrpar{ p(z) + \e w , q(w)} = f_\e(z,w)
  \text{ for } \e=
  -\frac{\delta \alpha (1-\abs{m})}{\kappa^{1/3}}. 
  \end{equation}
%
The IFS induced by $\tilde f_{_\bullet}$ on $\dd^2$ 
   is induced by 3 inverse branches for  $\widetilde f_{_\bullet}$ of the form 
\begin{align} (\tilde z, \tilde w)\mapsto  \big({\widetilde f_{_\bullet}}\big)_\zeta \inv (\tilde z, \tilde w ) \notag &=
 \left(\tilde p_0\inv\left ( \tilde z + 2\alpha (1-\abs{m}) \tilde
q_\zeta\inv (\tilde w)\right ), \tilde q_\zeta\inv (\tilde w)\right) \\ &=  \lrpar{ m\tilde z + \alpha m (1-\abs{m})  \zeta\lrpar{1+  O\lrpar{{\abs{\kappa}^{-2/3}}}} + O\lrpar{\delta}  ,  
 \frac12 \zeta  +O\lrpar{{\abs{\kappa}^{-2/3}}} }.\label{eq:tildef}
\end{align}
Further conjugating the first coordinate by a rotation of angle $\arg(m)$, we can   assume that the translation part in the first component 
of $\big({\widetilde f_{_\bullet}}\big)_\zeta \inv $ equals $\alpha \abs{m} (1-\abs{m})$ (see also Remark \ref{rmk:rotation}). 
 Hence if $\delta$ is so small   and $\kappa$ so  large 
that \eqref{eq:C1estimate} is satisfied,    
Lemma \ref{lem:IFSC2} applies 
 and we deduce that if $V$ is any vertical graph contained in $D(0, \unsur{10})\times \dd$ and small enough slope, then
 $\tilde V  \cap \tilde{ \mathcal{E}}_{_\bullet}\neq \emptyset$. 
 
 Notice that $\delta$ and $\kappa$ can be chosen independently from each other. By Step 1
  the terms   $O\big({\abs{\kappa}^{-2/3}}\big)$ are actually smaller than 
 ${\abs{\kappa}^{-2/3}}$, hence choosing $\abs{\kappa}\geq 2000^{3/2}$ is enough\footnote{There is no attempt to optimize the bound on $\kappa$ here.}. Likewise, we have to choose $\delta$ so that the term $O(\delta)$ in the first component of \eqref{eq:tildef} is roughly bounded by $\frac{1}{2000}(1-\abs{m})$
 
  
\medskip

\noindent{\bf Step 3.0 : } conclusion in a particular case.

Let $f$ be as in the statement of the theorem, 
and fix $\kappa$ large enough so that   the previous requirements are satisfied. We will first prove the result under the simplifying assumption that there exists a 
simple critical point $c$  such that $p(c) = 0$ (recall that 0 is the moderately repelling fixed point). 


The basic set $\E({f}) = \set{0}\times J_q$ is contained in $J^*$, so this property 
persists in a neighborhood of $f$. Notice that arbitrary close to $f$ there are   maps $ f_1$  for which  $\E({f_1})$ is disjoint from the post-critical set: it is enough to 
consider product maps of the form $( p_1(z) ,q(w))$ for which $p_1$ has  a repelling fixed point  at 0 
not belonging to the post-critical set. It follows that any 
intersection between $\mathcal{E}(g)$ and $g(\crit(g))$ for $g$ close to $f$ is proper in $\hol_3(\pd)$ so it gives rise to bifurcations. 

Consider   the perturbation $f_\e(z,w) = (p(z)+\e w, q(w))$ as   in  \eqref{eq:feps}, 
and let us show that for small enough $\e\neq 0$, 
$f_\e$ lies in the interior of the bifurcation locus in  $\hol_3(\pd)$. 
If $\e$ is small enough then 
$\delta =  \frac{\abs{\kappa}^{1/3}\abs{\e}}{0.8 (1-\abs{m})}$ satisfies the requirements of Step 2, so $\E(f_\e)$ is a blender-type Cantor set contained in 
$D(0, \delta)\times D_\kappa$ (recall that $D_\kappa= D(0, 2\abs{\kappa}^{1/3})$). 

In the rescaled coordinates $(\tilde z, \tilde w$), 
the vertical line $\set{\frac{c}{\delta}}\times \cc$ is a component of the critical set of $\tilde f_{_\bullet}$, and its image   is
 $$\tilde V_{_\bullet}: = \set{(-2\alpha(1-\abs{m}) \tilde w, \tilde q(\tilde w)), \tilde w \in \cc)}. $$ The intersection 
 $\tilde V_{_\bullet} \cap \dd^2$ is the union 
 of 3 vertical graphs of the form $$\tilde z= -2\alpha(1-\abs{m})   \tilde q_\zeta \inv (\tilde w), \ \tilde w\in \dd, \ \zeta\in \set{1,j,j^2},$$
 in particular these graphs are contained 
 in $D\lrpar{0, \unsur{10}}\times \dd$ and their  slope  is  smaller than 
$2(1-\abs{m}){\abs{\kappa}^{-2/3}}$. Thus  by Step 2 we deduce that 
$\tilde V_{_\bullet} \cap {\tilde {\mathcal{E}}}_{_\bullet}\neq \emptyset$, therefore 
$f_\e(\crit(f_\e))\cap \mathcal{E}_\e\neq \emptyset$.

 If now $g$ is close to $f_\e$, as in the proof of Theorem \ref{thm:topology} (see the beginning of Step 3 there), every compact piece of 
$\set{c}\times (\cc\setminus \set{0})$ can be followed as part of $\crit(g)$, so $g(\crit(g))$ contains three vertical graphs in   
$D(0, \delta)\times D_\kappa$ which are close to the corresponding ones for $f_\e$. Likewise, the basic set $\E(g)$ is induced by three 
 inverse branches $g_\zeta^{-1}$ close to  the corresponding ones for $f_\e$, thus Lemma \ref{lem:IFSC2} implies that 
 $\E(g)\cap g(\crit(g))\neq \emptyset$. This shows that  for small $\e\neq 0$,  $f_\e\in \mathring{\bif}$, hence 
$f\in \overline{\mathring{\bif}}$, as desired.

 \medskip

\noindent{\bf Step 3.1:} conclusion in the general case.

Let us now assume that $p$ is an arbitrary polynomial satisfying the assumptions of the theorem. 
Replacing    $p$ by an arbitrary close perturbation, 
we may assume that there exists a simple critical point $c$ and an integer $\ell\geq 1$ such that 
$p^\ell(c) = 0$.  
 Indeed $p$ belongs to the bifurcation locus so it admits an active critical point. We may suppose that  all critical points are simple because the locus of 
polynomials with a multiple critical point is a proper subvariety and the bifurcation locus is not pluripolar. 
Now either 0 is already the image of a critical point and we are done, or we can make a 
   conjugacy depending holomorphically on $p$ such  that $p(1) = 0$ and 0 is fixed. 
Since $c$ is active,   Montel's theorem implies that perturbing $p$ slightly it can be mapped under iteration onto 0 or 1, and we are done. 

Notice that we can also assume that the orbit segment
 $p(c), \ldots, p^\ell(c)$ contains no other critical point: 
otherwise we replace $c$ by the last appearing critical point in this orbit. 

\medskip

As in the case $\ell=1$, for small $\e$ we consider the map 
defined by $f_\e(z,w) = (p(z)+\e w, q(w))$, which admits a blender-type Cantor set $\E(f_\e)$ in 
$D(0, \delta)\times D_\kappa$, for   $\delta =  \frac{\abs{\kappa}^{1/3}\abs{\e}}{0.8 (1-\abs{m})}$. 

We need to show that this Cantor set intersects the post-critical set. Then as before this intersection will be robust under further perturbations and we 
infer 
that $f_\e\in \mathring{\bif}$, hence 
$f\in \overline{\mathring{\bif}}$. 
The difficulty is that we cannot  control 
$f^\ell_\e(\set{c}\times \cc)$ precisely enough to guarantee that it contains an almost flat 
 vertical graph in the rescaled bidisk\footnote{Note that already for $\ell=1$ the variation of 
$f_\e^\ell (\set{c}\times \cc)\cap (D(0,\delta)\times D_\kappa)$ with $\e$ is of  the same  order of magnitude as  $\delta$, and this quantity tends to increase exponentially with $\ell$.}. Instead we will use Proposition \ref{prop:briend duval} together with a graph transform argument. 

\medskip

We start with a lemma, which will be proven afterwards. 

\begin{lem}\label{lem:period3}
For small $\e$   there exists a  holomorphically varying fixed point 
    $x_\e$ for  $f_\e^3$,  contained in 
$\E(f_\e)\cap \lrpar{D\lrpar{0, \frac{\delta}{10}}\times D_\kappa}$. 
\end{lem}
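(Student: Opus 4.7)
The plan is to produce $x_\e$ as the analytic continuation of an explicit hyperbolic fixed point of $f_0^3$ and verify the location bound via a first-order expansion exploiting the cancellation $1+j+j^2=0$ (with $j=e^{2\pi i/3}$).

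Since for $|\kappa|$ large $q\rest{J_q}$ is topologically conjugate to the full $3$-shift, one can pick a genuine $3$-cycle $(y_0,y_1,y_2)$ of $q$ with symbolic itinerary $(1,j,j^2)$, so that $y_i$ is close to $\kappa^{1/3}j^i$. At $\e=0$ the point $x_0=(0,y_0)$ is then a fixed point of $f_0^3$ sitting in the basic repeller $\E(f_0)=\set{0}\times J_q$, and it is hyperbolic: the derivative $Df_0^3(x_0)$ is diagonal with entries $p'(0)^3$ (of modulus $>1$) and $(q^3)'(y_0)$ (very large for $|\kappa|$ large). The implicit function theorem provides a unique holomorphic family $x_\e=(z_\e,w_\e)$ of fixed points of $f_\e^3$ extending $x_0$, and by Lemma~\ref{lem:continuation} combined with the uniqueness of the continuation of hyperbolic periodic points, $x_\e$ lies in $\E(f_\e)$.

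It remains to locate $z_\e$. Since $f_\e$ has the form $(p(z)+\e w,\,q(w))$, the second coordinate of $f_\e^3$ does not involve $z$, so $w_\e=y_0\in J_q\subset D_\kappa$ automatically. Writing $p(z)=m^{-1}z+O(z^2)$ with $m=1/p'(0)$ and expanding $f_\e^3(z,w)=(z,w)$ to first order in $\e$, the first coordinate gives
\[
z_\e=\frac{\e\,(m\,y_0+m^2 y_1+m^3 y_2)}{m^3-1}+O(\e^2).
\]
The decisive point is that $y_i\approx\kappa^{1/3}j^i$ together with $1+j+j^2=0$ produces a near-cancellation: setting $h=1-m$, one obtains $my_0+m^2y_1+m^3y_2=-h(j+2j^2)\kappa^{1/3}+O(h^2|\kappa|^{1/3})$ and $m^3-1=-3h+O(h^2)$, so that the quotient is independent of $h$ to leading order and
\[
|z_\e|\;\lesssim\;\frac{|j+2j^2|}{3}\,|\e||\kappa|^{1/3}\;=\;\frac{|\e||\kappa|^{1/3}}{\sqrt 3}.
\]

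Comparing with $\delta/10=|\kappa|^{1/3}|\e|/\bigl(8(1-|m|)\bigr)$, the ratio $|z_\e|/(\delta/10)$ is bounded by $8(1-|m|)/\sqrt 3$; the almost-parabolic hypothesis $1<|p'(z_0)|<1.01$ forces $1-|m|<0.01$, so this ratio is at most $0.05$, which yields $x_\e\in D(0,\delta/10)\times D_\kappa$ as required. The main technical obstacle is to ensure that the several error terms --- the nonlinearity of $p$ (contributing an $O(\e^2)$ remainder), the $O(|\kappa|^{-2/3})$ corrections to the positions $y_i\approx\kappa^{1/3}j^i$, and the subleading $O(h^2)$ terms above --- are absorbed by the comfortable factor $1-|m|$ of margin; this is granted by the smallness of $\delta$ and the largeness of $|\kappa|$ already fixed in Step~2 of the theorem.
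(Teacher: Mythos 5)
Your strategy --- continuing the period-$3$ point of $f_0^3$ with itinerary $(1,j,j^2)$ by the implicit function theorem and locating it via a first-order expansion in $\e$ --- coincides with what the paper does in \emph{one} of its two cases, but your expansion in $h=1-m$ conceals a genuine gap. The hypothesis $1<|p'(z_0)|<1.01$ controls only the \emph{modulus} of $m=1/p'(0)$, not its argument, so $h=1-m$ need not be small, and the $O(h^2)$ remainders you discard are not subordinate to the terms you keep. The cleanest way to see the problem is the exact identity
\begin{equation*}
\frac{m+m^2j+m^3j^2}{m^3-1}=\frac{m}{mj-1},
\end{equation*}
so that $z_\e\approx \e\,\kappa^{1/3}\,m/(mj-1)$. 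If $m=|m|\,j^2$ (which the hypotheses allow), then $mj=|m|$ lies on the positive real axis and $|mj-1|=1-|m|$, whence $|z_\e|\approx |\e|\,|\kappa|^{1/3}/(1-|m|)=0.8\,\delta$: your period-$3$ point sits near the boundary of $D(0,\delta)$, far outside $D(0,\delta/10)$, and the conclusion fails for this choice of cycle. (Concretely, your step ``$m^3-1=-3h+O(h^2)$'' is where this is lost: for $m\approx j^2$ one has $m^3-1\approx 0$ while $-3h\approx -3(1-j^2)$ is of order $1$.)

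The repair is a case distinction on $\arg(m)$, and this is exactly the paper's argument. When $|m-1|>1/10$, take instead the genuine fixed point of $f_\e$ (constant itinerary), whose first coordinate is $\approx \e\,\kappa^{1/3}m/(m-1)$ and hence of modulus at most $10\,|\e|\,|\kappa|^{1/3}$, which is indeed smaller than $\delta/10=|\e|\,|\kappa|^{1/3}/(8(1-|m|))\ge 12.5\,|\e|\,|\kappa|^{1/3}$. Only when $|m-1|\le 1/10$ should you pass to the $3$-cycle $(1,j,j^2)$; there $|mj-1|\ge |j-1|-|m-1|\ge \sqrt3-1/10$ is bounded below by an absolute constant, and your cancellation computation becomes valid and gives essentially the bound you state. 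With this dichotomy inserted, the rest of your argument (the IFT continuation, the identification of $x_\e$ with a point of $\E(f_\e)$ via uniqueness of hyperbolic continuation, $w_\e=y_0$, and the absorption of the nonlinear and $O(|\kappa|^{-2/3})$ errors) is sound; the paper reaches the same two fixed points by a Rouch\'e-type argument, respectively a Kobayashi-contraction argument, in the rescaled bidisk, which is a cosmetic difference.
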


Let us show that there exist arbitrary small non-zero values of $\e$ such that $x_{\e}$ belongs to the  post-critical set of $f_\e$. 
Indeed for small $\e_0>0$, consider the family $(f_\e)_{\e\in D(0, \e_0)}$. For $\e=0$, $x_0$ belongs to $ \set{0}\times \cc = f_0^\ell(\set{c}\times \cc)$. So either 
for every small $\e$, $x_\e\in f_\e^\ell(\set{c}\times \cc)$ and we are done, or the family $(f_\e)_{\e\in D(0, \e_0)}$ admits bifurcations. 
Within this family, the bifurcation locus cannot have isolated points because by \cite[Thm  1.6]{bbd} it supports the measure $dd^cL$ which has continuous potential. It follows that $0\in D(0, \e_0)$ is an accumulation point of the bifurcation locus, so  by Proposition \ref{prop:briend duval} there exists a sequence 
of parameters $0\neq \e_k\cv 0$,    such that $x_{\e_k}$ belongs to the post-critical set.

In addition   $\crit(f_\e) = (\crit(p)\times \cc)\cup (\cc\times \crit(q))$ is a union of vertical and horizontal lines. 
The unique horizontal component $\set{0}\times \cc$ escapes to infinity so we conclude that  for 
such parameters $\e_k$, there exists a critical point $c'$ for $p$ and an integer $N$ 
such that $x_\e\in f_\e^N(\set{c'}\times \cc)$. 
The image of a   graph over $D_\kappa$   in the second coordinate is the union of 3 such graphs: indeed write $\Gamma = \set{z= \varphi(w),  \ w\in D_\kappa}$, 
and observe that 
$$f(\Gamma)  = \bigcup_{\zeta\in \set{1, j, j^2}} \set{z= p(\varphi(q_\zeta^{-1}(w)) + \e q_\zeta^{-1}(w), \ w\in D_\kappa}.$$
Thus we infer that the irreducible component of  $f_\e^n(\set{c'}\times \cc)$ through $x_\e$ is a vertical graph. In particular it is smooth and 
its tangent vector at $x_\e$ is not parallel to the horizontal axis. 
 
 The following lemma then allows to conclude the proof. 
 
 \begin{lem}\label{lem:Wuu}
 The periodic point $x_\e$ of Lemma \ref{lem:period3} admits a strong unstable manifold 
 $W^{uu}(x_\e)$ 
 that is a graph over the second coordinate in $D\lrpar{0, \delta}\times D_\kappa$ with slope 
 smaller than  $\frac{\delta(1-\abs{m})}{200\abs{\kappa}^{1/3}}$.  Furthermore if $\Gamma$ is any any germ of holomorphic disk through $x_\e$, not tangent to the horizontal direction, the sequence of cut-off iterates $f^{3n}(\Gamma)\rest{D\lrpar{0,\delta}\times D_\kappa}$ converges to $W^{uu}(x_\e)$. 
 \end{lem}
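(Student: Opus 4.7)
The plan is to realize $W^{uu}(x_\e)$ as the unique attracting fixed graph of an inverse graph transform, using exactly the estimates of Lemma \ref{lem:IFSC2}.

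First I would determine the expansion rates at $x_\e$. At $\e=0$ one has $x_0=(0, w_0)$ for some period-$3$ point $w_0$ of $q$ lying in $J_q$, and $Df_0^3(x_0)$ is diagonal in the coordinate basis with horizontal eigenvalue $m^{-3}$ (of modulus slightly above $1$) and vertical eigenvalue of modulus roughly $27|\kappa|^2$, since $|q'(w)|=3|w|^2\asymp 3|\kappa|^{2/3}$ on $J_q$. For small $\e$ and large $\kappa$, $Df_\e^3(x_\e)$ therefore admits a dominated splitting $E^u_\e\oplus E^{uu}_\e$ with $E^u_\e$ close to horizontal, $E^{uu}_\e$ close to vertical, and domination ratio of order $|\kappa|^2$.

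Next I would realize $W^{uu}(x_\e)$ as the fixed point of an inverse graph transform. Let $F_\e$ be the inverse branch of $f_\e^3$ at $x_\e$ corresponding to the symbolic itinerary of $x_\e$ inside the blender; it is a threefold composition of inverse branches of the form \eqref{eq:tildef}. Let $\mathcal{G}$ denote the space of vertical graphs in $D(0,\delta)\times D_\kappa$ passing through $x_\e$ with slope at most $\frac{\delta(1-|m|)}{200|\kappa|^{1/3}}$; after the rescaling $(\tilde z,\tilde w)=(\delta^{-1}z,\tfrac12\kappa^{-1/3}w)$ used in Step 2 of the proof of Theorem \ref{thm:blender}, this slope bound translates exactly into the bound $\frac{1-|m|}{100}$ of Lemma \ref{lem:IFSC2}. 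Repeating the graph-transform calculation from that lemma, applied to the threefold composition $F_\e$ (which still has the required form), shows that $F_\e(\mathcal{G})\subset \mathcal{G}$ and that $F_\e$ acts on $\mathcal{G}$ as a strict $C^0$ contraction. Its unique fixed point is the sought-after $W^{uu}(x_\e)$, which by construction has slope bounded as claimed.

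Finally, for the convergence claim, take a germ $\Gamma$ through $x_\e$ whose tangent vector has a nonzero $E^{uu}_\e$-component. Under the iterates $Df_\e^{3n}(x_\e)$ this tangent line aligns with $E^{uu}_\e$ at the exponential rate $|\kappa|^{-2n}$, in the spirit of the inclination (or $\lambda$-) lemma. For $n$ large the connected component of $f_\e^{3n}(\Gamma)\cap (D(0,\delta)\times D_\kappa)$ containing $x_\e$ is then a vertical graph lying in $\mathcal{G}$. Invoking the forward invariance of $W^{uu}(x_\e)$ together with the $C^0$-contraction of $F_\e$ on $\mathcal{G}$, one deduces that this component, and hence every subsequent cut-off iterate, converges to $W^{uu}(x_\e)$. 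The main obstacle I anticipate lies precisely here: upgrading the infinitesimal alignment of tangent vectors at $x_\e$ into a uniform $C^1$ statement on the whole cut-off component requires bounded-distortion estimates along the growing iterates, together with some care about how the curve leaves the bidisk through $\fr D_\kappa$.
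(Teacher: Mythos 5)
Your overall strategy --- a graph transform on vertical graphs of small slope reusing the computation of Lemma \ref{lem:IFSC2}, plus an inclination-lemma argument for an arbitrary transverse germ --- is the same as the paper's, but there are two problems. First, the direction of your graph transform is backwards. You take $F_\e$ to be the inverse branch of $f_\e^3$ at $x_\e$ (a composition of the contractions \eqref{eq:tildef}) and assert that $F_\e(\mathcal G)\subset\mathcal G$ and that $F_\e$ is a strict $C^0$ contraction on $\mathcal G$ with fixed point $W^{uu}(x_\e)$. But $F_\e$ contracts the vertical ($w$) direction by a factor of order $\abs{\kappa}^{-2}$, so it sends a graph over all of $D_\kappa$ to a graph over a tiny disk: it does not preserve $\mathcal G$. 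The operator that preserves vertical graphs of small slope is the \emph{forward} cut-off transform $\Gamma\mapsto f_\e^{3}(\Gamma)\cap\lrpar{D(0,\delta)\times D_\kappa}$; this is what the computation of Lemma \ref{lem:IFSC2} actually controls (there the transform is $\Gamma\mapsto \overline L_j^{-1}(\Gamma)$, and $\overline L_j^{-1}$ is the forward dynamics). Note also that Lemma \ref{lem:IFSC2} only yields invariance of the slope and position conditions, not the $C^0$ contraction between two distinct graphs; the latter requires a separate (easy) estimate: two transverse germs through $x_\e$, intersected with $g^n\lrpar{D(0,\delta)\times D_\kappa}$ where $g$ is the inverse branch of $f_\e^3$ fixing $x_\e$, are $O(\abs{\kappa}^{-n})$ apart, and pushing forward by $f_\e^{3n}$ multiplies horizontal distances by $O(\abs{m}^{-3n})$, so the product still tends to $0$ and the cut-off iterates form a Cauchy sequence.

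Second, and more seriously, the step you yourself flag as ``the main obstacle'' --- upgrading the alignment of the tangent line at $x_\e$ to the statement that the whole cut-off component of $f_\e^{3n}(\Gamma)$ is a vertical graph belonging to $\mathcal G$ --- is exactly the content of the lemma for a general germ, and you leave it open. No bounded-distortion machinery is needed to close it. The boxes $g^n\lrpar{D(0,\delta)\times D_\kappa}$ shrink to $x_\e$ and are asymptotically stretched in the horizontal direction (horizontal size of order $\delta\abs{m}^{3n}$, vertical size of order $\abs{\kappa}^{1/3-2n}$), so any germ through $x_\e$ not tangent to the horizontal crosses $g^n\lrpar{D(0,\delta)\times D_\kappa}$ vertically once $n$ is large; pushing this piece forward by $f_\e^{3n}$ then produces, by the same contraction-mapping/implicit-function computation as in Lemma \ref{lem:IFSC2}, a genuine vertical graph over all of $D_\kappa$ satisfying the slope bound. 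This also disposes of your worry about how the curve exits through $\fr D_\kappa$: one only ever iterates forward the small piece of $\Gamma$ contained in $g^n\lrpar{D(0,\delta)\times D_\kappa}$. With this in place, your concluding contraction argument (applied to the forward transform, and using that the orbit of $x_\e$ stays in the bidisk so the transform can be iterated) goes through.
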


For $\e = \e_k$, starting from the component $V$ of $f^N_\e(\set{c'}\times \cc)$ through $x_\e$, we iterate under $f^3_\e$, and the lemma says that for large $n$, 
 $f^{3n}_\e(V)$ contains a graph in $D\lrpar{0, \delta}\times D_\kappa$ of slope less than $\frac{\delta(1-\abs{m})}{200\abs{\kappa}^{1/3}}$,
 intersecting $D\lrpar{0, \frac{\delta}{10}}\times D_\kappa$. In the rescaled coordinates, this corresponds to
  the requirements of Lemma \ref{lem:IFSC2} so we get an  intersection between $f_\e^{N+3n}(\crit(f_\e))$ and $\E(f_\e)$.  
  Notice that the vertical graph producing this intersection is the image under $f_\e^{N+3n}$ of a disk of the form $\set{c'}\times \Delta$, where $\Delta$ is a small 
  topological disk close to a cube root of $\kappa$.  
 Finally, as in the particular case of Step 3.0, if $g$ is a small perturbation  of $f_\e$, $\set{c'}\times \Delta$ can 
 be lifted to a disk  $\Delta(g)\subset 
 \crit(g)$, and $g^{N+3n}(\Delta(g))$ is $C^1$-close to the corresponding component of $f_\e^{N+3n}(\crit(f_\e))$, therefore it intersects $\E(g)$. 
  This completes the proof of the theorem.   
  \end{proof}

Before proving Lemma \ref{lem:period3} let us state a  Rouché-like fixed point theorem in two variables:

\begin{prop}\label{prop:rouche}
Let $h:N\big(\overline{\mathbb{B}}\big) \cv \cd$ be a holomorphic map defined in a neighborhood of a 
  bidisk in $\cd$. 
Assume that $h$ admits a unique simple fixed point in $\mathbb{B} $. If 
$\eta:N\big(\overline{ \mathbb{B} }\big) \cv \cd$ is a holomorphic map such that 
$\norm{\eta} < \norm{h-\mathrm{id}}$ on $\fr\bb $ then  $h+\eta$ admits a unique fixed point in $\bb$.
\end{prop}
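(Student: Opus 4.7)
The plan is to recast the problem as a Rouché-type statement for zeros of holomorphic maps between equidimensional complex manifolds, and conclude via the topological (Brouwer) degree. Set $F := h - \mathrm{id}$ and $G := (h+\eta) - \mathrm{id} = F + \eta$, both viewed as holomorphic maps $N(\overline{\bb}) \to \cd$; fixed points of $h$ (resp.\ $h+\eta$) in $\bb$ correspond exactly to zeros of $F$ (resp.\ $G$) in $\bb$. The hypothesis $\norm{\eta} < \norm{F}$ on $\fr\bb$ implies $\norm{F_t} \geq \norm{F} - t\norm{\eta} > 0$ on $\fr\bb$ for the linear homotopy $F_t := F + t\eta$, $t\in[0,1]$, so that none of the $F_t$ vanishes on $\fr\bb$.

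I would then invoke the Brouwer degree $\deg(F_t, \bb, 0)\in\zz$, which is well-defined since $F_t$ does not vanish on $\fr\bb$, and constant in $t$ by homotopy invariance. At $t=0$, the unique fixed point $x_0\in\bb$ of $h$ is simple in the sense that $dF(x_0) = dh(x_0) - \mathrm{id}$ is invertible; since a holomorphic local diffeomorphism preserves the canonical complex orientation, the local degree of $F$ at $x_0$ equals $+1$, hence $\deg(F,\bb,0)=1$.

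It follows that $\deg(G,\bb,0)=1$. The zero set $Z := G^{-1}(0)\cap \bb$ is an analytic subset of $\bb$ disjoint from $\fr\bb$, hence relatively compact; since the bidisk contains no positive-dimensional compact analytic subvariety, $Z$ is a finite set $\{p_1,\ldots,p_r\}$. At each $p_i$ the local holomorphic multiplicity $\mult_{p_i}(G)$ is a strictly positive integer and coincides with the local Brouwer degree, so $\sum_{i=1}^r \mult_{p_i}(G) = \deg(G,\bb,0) = 1$, which forces $r=1$ with multiplicity $1$; this is the unique fixed point of $h+\eta$ in $\bb$.

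The main technical input is the identification between the local Brouwer degree of a holomorphic map at an isolated zero and its holomorphic intersection multiplicity, together with the positivity and additivity of the latter. This is classical (see e.g.\ Chirka's book on complex analytic sets), and once granted, the proof reduces to the elementary homotopy argument above; the ``Rouché'' flavor of the statement is then a direct analogue of the one-variable case, with the simplicity hypothesis on the fixed point playing the role of the normalization fixing the degree at $t=0$.
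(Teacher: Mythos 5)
Your argument is correct and follows essentially the same route as the paper: the paper also uses the linear homotopy $F_t = F + t\eta$, the non-vanishing of $F_t - \mathrm{id}$ on $\fr\bb$, and the invariance of the intersection index (equivalently, the topological degree) to conclude, with the simplicity of the fixed point pinning the count to $1$ at $t=0$ and the positivity of local multiplicities forcing uniqueness. Your write-up merely makes explicit, via the Brouwer degree and the degree--multiplicity identification, what the paper compresses into a citation of the continuity of intersection indices for properly intersecting analytic sets.
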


\begin{proof}
Consider the continuous family of holomorphic mappings $F_t = F+t\eta$ for $t\in [0,1]$. For $t = 0$, the equation $F_t(z) = z$ admits a unique simple solution in
$\mathbb{B}$.  The assumption on $\eta$ implies that for every $\eta$ in $[0,1]$, 
$\norm{F_t(z) - z}>0$ on $\fr\bb$. Thus the continuity of intersection indices of properly intersection varieties of complementary dimensions implies the result. 
\end{proof}

\begin{proof}[Proof of Lemma \ref{lem:period3}]	
To understand the argument, 	let us  come back to the linear IFS studied in Lemma \ref{lem:IFS} (for $d=3$). 
Each $\ell_j$ admits a unique fixed point, and there are two cases.
 Either $m$ is far away from 1 and this fixed point is close to the origin. Going to two dimensions,
  we will choose  $x_\e$ corresponding to one of these fixed points. 
 When  $m$ is close to 1 the fixed point is 
   close to the boundary of the unit disk, so it is not convenient for us. On the other hand if we look at  $\ell_1\circ \ell_2\circ \ell_3$, 
   the translation terms almost compensate and we get a fixed point close to the origin, now corresponding to a period 3 point for $f_\e$. 

\medskip

For the details it is convenient to work in the rescaled coordinate system $(\tilde z, \tilde w)$:
 the expression of $\tilde f_{_\bullet}\inv$ is given in \eqref{eq:tildef} (hereafter we drop the subscript   for notational convenience),  
 and we look for 
a fixed point (resp. a period 3 point) in $D(0, \unsur{10})\times \dd$.  Assume first $m$ is far away from 1: $\abs{m-1}> \unsur{10}$ and consider the inverse branch $
\tilde f_1\inv$. 
 We write  $\tilde f_1\inv(\tilde z, \tilde w)$ as the sum of an affine  term and a perturbation : 
$$\tilde f_1\inv(\tilde z, \tilde w)  = h(\tilde z, \tilde w) +   \eta(\tilde z, \tilde w) \text{ , where } h(\tilde z, \tilde w) =  \lrpar{ m \tilde z + \alpha m (1-\abs{m}), \frac12} .$$
Solving $h(\tilde z, \tilde w) = (\tilde z, \tilde w)$ yields the solution 
$ \big( \frac{\alpha m(1- \abs{m})}{m-1}, \unsur{2}\big)$ which belongs to 
$D(0, \unsur{10})\times \dd$ because 
 $$\abs{\frac{\alpha m(1- \abs{m})}{m-1}} \leq \frac{0.8 /{100}}{1/10} = \frac{8}{100}$$
  (recall $\alpha = 0.8$ and $1-\abs{m}<0.01$). 
On the other hand if $\abs{\tilde z} = \frac{1}{10}$ we get 
$$\norm{h(\tilde z, \tilde w)- (\tilde z, \tilde w) }  \geq  \abs{(m-1) \tilde z + \alpha m (1-\abs{m})} 
\geq   \frac{1}{10} \abs{m-1}  - \frac{0.8}{100}
\geq  \frac{2}{1000} , $$ and  when $\abs{\tilde w}=1$ considering the second component gives $\norm{   h(\tilde z, \tilde w)  - (\tilde z, \tilde w) }\geq 1/3$,
 so 
$$ \norm{   h(\tilde z, \tilde w)  - (\tilde z, \tilde w) }     \geq   \frac{2}{1000} \text{ on } \fr \big(D\big({0, \unsur{10}}\big) \times \dd\big).$$
Finally the choices already made for $\kappa$ and $\delta$
 imply that $\norm{\eta(\tilde z, \tilde w)}<  \frac{2}{1000}$    
 and Proposition \ref{prop:rouche} yields the desired fixed point.  (Recall that Theorem \ref{thm:blender} claims a uniformity in $\kappa$; 
 on the other hand $\delta$ can  be freely reduced)

\medskip

Suppose now that $\abs{m-1}\leq  \unsur{10}$, and consider the inverse branch for $\tilde f ^3$ given by 
$\tilde f_{j^2}\inv\circ\tilde f_j\inv \circ\tilde f_1\inv$. After computation, it expresses in coordinates as 
\begin{equation}\label{eq:branch}
\tilde f_{j^2}\inv\circ\tilde f_j\inv \circ\tilde f_1\inv(\tilde z, \tilde w) = \lrpar{ m^3 \tilde z + \alpha m(1-\abs{m}) (j^2+ mj+ m^2) +\eta_1(\tilde z, \tilde w), 
 \frac{j^2}{2} + \eta_2(\tilde z, \tilde w) },$$
 with   
$$\abs{\eta_1(\tilde z, \tilde w)} \leq 3 (1- \abs{m} ) \abs{\kappa}^{-2/3}  +3 O(\delta)
 \text{ and } \abs{\eta_2(\tilde z, \tilde w)}\leq   \abs{\kappa}^{-2/3}
  .\end{equation}
In this case we show directly that $\tilde f_{j^2}\inv\circ\tilde f_j\inv \circ\tilde f_1\inv$ sends  the bidisk $D\big({0, \unsur{10}}\big) \times \dd$ strictly into itself, 
 so it admits a unique fixed point by contraction of the Kobayashi metric. By the maximum principle it is enough to show that 
 the boundary of the bidisk is mapped into its interior, and  clearly  we only need to focus on the first coordinate. 
 For $\abs{\tilde z}= \frac{1}{10}$ the first component in
 \eqref{eq:branch}
  is bounded by 
 \begin{align*}
  \frac{\abs{m}^3}{10} + \alpha (1-\abs{m})\abs{\frac{m^3-1}{m-j}} + \abs{\eta_1(\tilde z, \tilde w)} 
  &\leq  \frac{\abs{m}^3}{10}  + 2 (1-\abs{m})^2  +  \unsur{100}(1-\abs{m})\\
  &\leq \frac{\abs{m}}{10} +  \frac{3}{100} (1-\abs{m}) < \frac{1}{10},
  \end{align*} 
 where in the first line we use 
 $$
  \abs{\frac{m^3-1}{m-j}} \leq   \abs{\frac{(1-m)(m^2+m+1)}{m-j}}\leq   \frac{3(1-\abs{m})}{\abs{1-j}- \abs{m-1}} \leq \frac{3}{\sqrt{3}-1/10}(1-\abs{m}) \leq 2(1-\abs{m})$$
 and the bound for $ \abs{\eta_1(\tilde z, \tilde w)}$   coming from our choice of $\kappa$ and $\delta$ in Step 2. Thus the desired contraction property is established and the result follows. 
\end{proof}

\begin{proof}[Proof of Lemma \ref{lem:Wuu}]
The existence and the graph transform property of the strong unstable manifold are classical. In our case the specific geometric features of $f_\e$ make the construction rather easy so we sketch it for convenience. 
Since $  f_\e$ preserves the foliation $  \set{w = \cst}$, corresponding to the least repelling direction, $x_\e$ admits a weak unstable manifold  contained in a horizontal leaf. On the other hand, if $g$ denotes the inverse branch of $  f^3_\e$  such that ${x_\e} = g(x_\e)$, then $g^n(D(0, \delta)\times D_\kappa)$  
is a sequence of topological bidisks converging to $\set{x_\e}$, which are asymptotically stretched in the horizontal direction. 
Therefore if $\Gamma$ is a germ of holomorphic disk through $x_\e$ transverse to the horizontal leaf, for large enough $n$ 
it crosses  $g^n(D(0, \delta)\times D_\kappa)$ vertically, so iterating forward, the cut-off iterate $$f_\e^{3n}(\Gamma)\rest{D(0,\delta)\times D_\kappa} = 
f_\e^{3n}(\Gamma \cap g^n(D(0, \delta)\times D_\kappa))$$ is a vertical graph in $D(0, \delta)\times D_\kappa$. Furthermore, it $\Gamma$ and $\Gamma'$ are two such graphs, then  the $C^0$ distance between 
$\Gamma$ and $\Gamma' $ in 
$ g^n(D(0, \delta)\times D_\kappa)$  is $O(\abs{\kappa}^{-n})$, and it gets  multiplied by  a factor $O(\abs{m}^{-3n})$ under $f_\e^{3n}$. Applying this to $\Gamma$ and 
$f_\e^3(\Gamma)$ shows that $f_\e^{3n}(\Gamma)\rest{D(0,\delta)\times D_\kappa} $ is Cauchy, hence converges, and its limit is by definition the strong unstable 
manifold $W^{uu}(x_\e)$.

It remains to establish the estimate on the slope of $W^{uu}(x_\e)$. For this it is more convenient to work in the rescaled coordinates $(\tilde z, \tilde w)$, in which case the   expected bound on the slope is $\frac{1}{100}(1-\abs{m})$. Since in these coordinates, the strong unstable manifold is the only vertical graph invariant under the graph transform, it is enough to show that the set of vertical graphs through $x_\e$
with slope bounded by $\frac{1}{100}(1-\abs{m})$ is graph transform invariant. As already seen, if 
$\tilde z  = \varphi(\tilde w)$ is   a vertical graph in $\dd\times \dd$, its forward image under $\tilde f_\e$, 
restricted to $\cc\times \dd$ is the union of 3 vertical graphs of equation 
$$\tilde z  = \tilde p (\varphi(\tilde q_\zeta\inv(\tilde w))) -2\alpha (1-\abs{m}) q_\zeta\inv(\tilde w).$$ Assuming $\norm{ \varphi'}_\dd\leq \frac{1}{100}(1-\abs{m})$,  each of them has a slope bounded by 
$$\big\| (q_\zeta\inv)' \big\|_\dd \norm{\tilde p'}_\dd \norm{ \varphi'}_\dd+ 2   (1-\abs{m})\big\| (q_\zeta\inv)' \big\|_\dd \leq(1-\abs{m}) \lrpar{ \frac1{100}\abs{\kappa}^{-2/3} 
\norm{\tilde p'}_\dd + 2\abs{\kappa}^{-2/3} }.  $$ Since for small $\delta$, $ \norm{\tilde p'}_\dd \leq 2$ and $\abs{\kappa}^{2/3}\geq 2000$, 
this quantity is bounded by $\frac{1}{100}(1-\abs{m})$.

For a general graph $\Gamma$ we cannot iterate this reasoning because its forward iterates may leave the bidisk. However 
 in our case we start with $\Gamma\ni x_\e$  which is either  fixed or of period 3 (with its orbit contained in $\dd^2$) so we indeed get an invariant set of vertical graphs and we are done. 
\end{proof} 
 
 The uniformity in $\kappa$ in Theorem \ref{thm:blender} allows to let the multiplier tend to 1.

 \begin{cor}\label{cor:parabolic}
 Let $d\geq 3$ and  $f:\pd\cv\pd$ be a product map of the form $$f(z,w)  =   (p(z), w^d+ \kappa).$$
 Assume   $p_0$ admits a neutral fixed point. 
 Then there exists a constant $\kappa_0 = \kappa_0(d) $ such that 
 if $\abs{\kappa} > \kappa_0$,    $f$ belongs to the closure of the interior of 
the bifurcation locus in $\hold(\pd)$. 
 \end{cor}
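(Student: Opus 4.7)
The plan is to deduce the corollary from Theorem \ref{thm:blender} via approximation, exploiting the uniformity of the constant $\kappa_0(d)$ (it depends only on the degree). I would fix $\kappa$ with $\abs{\kappa} > \kappa_0(d)$ once and for all; it then suffices to construct a sequence $p_n \cv p$ in $\poly_d(\cc)$ such that each $p_n$ satisfies the hypotheses of Theorem \ref{thm:blender}, namely (a) $p_n \in \bif(\poly_d(\cc))$, and (b) $p_n$ admits a repelling fixed point $z_n$ with $1 < \abs{p_n'(z_n)} < 1.01$. Then the maps $f_n(z,w) := (p_n(z), w^d + \kappa)$ would all lie in $\overline{\mathring{\bif}(\hold(\pd))}$, and since $f_n \cv f$ and this set is closed, $f \in \overline{\mathring{\bif}}$ as desired.

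The starting observation is that $p$ has a neutral fixed point $z_0$, so $p$ itself belongs to $\bif(\poly_d(\cc))$ by classical one-dimensional MSS theory, and consequently admits at least one active critical point $c_0$. In a neighborhood $V$ of $p$ in $\poly_d(\cc)$, let $z(q)$ and $c(q)$ denote the holomorphic continuations of $z_0$ and $c_0$ (the latter possibly after a branched cover), and set $\lambda(q) := q'(z(q))$. The open set
\[
R := \set{q \in V : 1 < \abs{\lambda(q)} < 1.01}
\]
is non-empty and has $p$ on its boundary. I would produce $p_n$ by finding Misiurewicz parameters inside $R$: points $p_n \in R$ satisfying $p_n^{k_n}(c(p_n)) = z(p_n)$ for some $k_n \geq 1$. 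Such $p_n$ automatically fulfill (a) (Misiurewicz parameters lie in the bifurcation locus) and (b) by construction. The activity of $c_0$ at $p$ makes the family $\{g_k(q) := q^k(c(q))\}_k$ non-normal at $p$, and a standard Montel argument then gives density of the Misiurewicz set $M := \bigcup_k \set{q \in V : g_k(q) = z(q)}$ in the activity locus of $c_0$ near $p$, in particular accumulation of $M$ on $p$.

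The hard part will be to show that $M \cap R$ actually accumulates on $p$, rather than $M$ clustering only along the boundary hypersurface $\set{\abs{\lambda} = 1}$, which is itself locally contained in $\bif(\poly_d(\cc))$ since every such polynomial has a neutral fixed point. This is a pluripotential-theoretic statement about the spread of $\tbif$ around $p$ and cannot be extracted from non-normality at a single point alone. My plan to handle it is to combine two perturbation directions at $p$: one pushing $\abs{\lambda}$ into the range $(1,1.01)$, and a transverse one along which the activity of $c_0$ produces Misiurewicz relations via Montel. Since $\poly_d(\cc)$ has complex dimension $d+1\geq 4$, the codimension-one constraint $\abs{\lambda} = 1$ leaves ample room for such a two-parameter construction to yield bifurcation parameters with fixed-point multiplier in $(1,1.01)$ arbitrarily close to $p$. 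Having produced $p_n$, Theorem \ref{thm:blender} applied to each $p_n$ and a final passage to the limit yields the conclusion.
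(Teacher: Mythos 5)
Your overall reduction is the same as the paper's: fix $\kappa$ with $\abs{\kappa}>\kappa_0(d)$, use the uniformity of $\kappa_0$ in Theorem \ref{thm:blender}, and produce polynomials $p_n\to p$ that bifurcate in $\poly_d(\cc)$ and carry a repelling fixed point of multiplier in $(1,1.01)$. (Incidentally, you do not need the $p_n$ to be Misiurewicz: any point of $\bif(\poly_d)\cap R$ already satisfies the hypotheses of Theorem \ref{thm:blender}, so the Misiurewicz layer is an unnecessary detour, though a harmless one.)

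The genuine gap is exactly the step you flag as ``the hard part'': you never prove that $\bif(\poly_d)\cap R$ accumulates at $p$, where $R=\set{1<\abs{\lambda(q)}<1.01}$. The two-direction/dimension-count heuristic does not close this, and is in fact circular: to run Montel on a slice you need an active critical point (equivalently, non-normality of $q\mapsto q^k(c(q))$) at a base point \emph{inside} $R$, and the existence of such a point is precisely the claim at issue. Non-normality at $p$ itself only yields Misiurewicz parameters in a full neighborhood of $p$, with no control on which side of the hypersurface $\set{\abs{\lambda}=1}$ they fall. The claim is genuinely delicate: in the quadratic model near the cusp $c=1/4$, almost all of the region $\set{\abs{\lambda}>1}$ close to the cusp lies in the escape (hence stability) locus, and the bifurcation parameters with a low-multiplier repelling fixed point survive only in the thin $p/q$-limbs attached near the cusp --- their existence is not a soft codimension statement. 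The paper closes this gap by a concrete local computation at the (rationally indifferent, WLOG) fixed point: along a one-parameter family $p_\la$ with $\rho_\la=p_\la'(0)$ varying non-degenerately, the parabolic point splits into the central fixed point, of multiplier $\rho_\la^q=1+b\la+O(\la^2)$, and $\nu q$ satellite periodic points of multiplier $1-\nu q\, b\la+O(\abs{\la}^{1+1/\nu q})$. Choosing $b\la$ real positive versus purely imaginary keeps the central point repelling of low multiplier while making the satellite cycle attracting or repelling, which forces the bifurcation locus to meet $\set{\abs{\rho_\la^q}>1}$ arbitrarily close to $\la=0$. Some argument of this kind (or another mechanism producing non-normality inside $R$) is needed; without it your proof is incomplete at its central step.
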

 
 \begin{proof}
Using  the uniformity with respect to  $\kappa$ in  Theorem \ref{thm:blender} 
it is enough  to show that in every neighborhood of $p$ in $\poly_d$ there exists a polynomial $p_1$ with a repelling 
 fixed point of low multiplier, belonging to the bifurcation locus. 
 
 Without loss of generality we may assume that $z_0$ is rationally indifferent, that is  
 $p'(z_0) = e^{2i\pi \frac{p}{q}}$. Taking a branched cover of $\poly_d$ if necessary (this is needed  only  if $p'(z_0)= 1$), we can 
 follow the fixed point $z_0$ holomorphically and normalize the coordinates so that $z_0 = 0$. Fix a one-dimensional 
 holomorphic family of polynomials $(p_\la)_{\la\in \dd}$ with $p_0 = p$ and  such that  $\frac{d  }{d\la}\lrpar{p'_\la(0)}
 \rest{\la = 0}    \neq 0$. Put   $\rho_\la = p'_\la(0)$. Then a classical computation shows that 
 there exists a local change of coordinates $x = \varphi_\la(z)$ depending holomorphically on $\la$ such that in the new coordinates $f_\la^q$ expresses as 
 $$f_\la^q(x)   = \rho_\la^q x  + x^{\nu q + 1} + x^{\nu q  + 2} g_\la(x)$$ for some integer $\nu \geq 1$ (see \cite[Prop. 1]{tv} or \cite[prop. 8.1]{tangencies}). 
 Write $\rho_\la^q =  1+ b\lambda + O( \la^2)$, where $b = q \frac{d \rho_\la}{d\la}\rest{\la = 0}$. Then 
\begin{equation}\label{eq:fixed}
f_\la^q(x) - x  =  x \lrpar{(\rho_\la^q-1) + x^{\nu q} +   x^{\nu q+1 }g_\la(x)}
\end{equation}
 hence $f_\la^q$ admits $\nu q+1$ fixed points near the origin: one at 0 with multiplier $\rho_\la^q$ and the remaining $\nu q$ ones approximately equal to the $(\nu q)^{\rm th}$ roots of $(1- \rho_\la^q)$. More precisely plugging into the right hand side of 
  \eqref{eq:fixed}, we get that  these fixed points $(\alpha_i)_{i=1, \ldots, \nu q}$ satisfy 
 $$(\alpha_i)^{\nu q} = (1- \rho_\la^q) - (\alpha_i)^{\nu q+1} g_\la(\alpha_i) = -b\la + O\lrpar{\abs{\la}^{1+ \frac{ 1}{\nu q}}},$$
 and their multipliers are of the form 
 $$(f_\la^q)'(\alpha_i) = \rho_\la ^q  + (\nu q+ 1) \alpha_i ^{\nu q} + O \lrpar{(\alpha_i)^{\nu q+1}} = 1- \nu q b \la +  O\lrpar{\abs{\la}^{1+ \frac{ 1}{\nu q}}}. $$
We see that arbitrary close to the origin in parameter space, in the regime where $0$ is repelling under $f_\la^q$ (i.e. $\abs{\rho_\la^q}>1$), 
we can arrange so that $\abs{(f_\la^q)'(\alpha_i) }<1$ (e.g. when  $b\la$ is real and positive) or $\abs{(f_\la^q)'(\alpha_i) }>1$ (by taking $b\la$ on the imaginary axis). Therefore bifurcations happen and the 
result follows.
 \end{proof}

We now state a version of  Theorem \ref{thm:blender} in degree 2.   

\begin{thm}\label{thm:blender2}
There exists a parameter $c$ in the Mandelbrot set such that for every large enough    $\kappa\in \cc $, the product map 
$f(z,w) = (z^2+c, w^2+\kappa)$ belongs to the closure of the interior of the bifurcation locus in $\hol_2(\pd)$. 
\end{thm}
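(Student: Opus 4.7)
The plan is to adapt the proof of Theorem \ref{thm:blender} to degree $2$, replacing Lemma \ref{lem:IFSC2} by its two-branch version stated in Remark \ref{rmk:blender2}. The key quantitative difference is that the multiplier $m = 1/p'(z_0)$ at the chosen repelling fixed point must be close to the imaginary axis (rather than merely of modulus close to one), so that the two-iterate covering property of Lemma \ref{lem:IFS2} applies.

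To select the parameter $c$: the main cardioid of the Mandelbrot set $M$ is parameterized by $c(\theta) = e^{i\theta}/2 - e^{2i\theta}/4$, at which the neutral fixed point has multiplier $e^{i\theta}$; for $\theta = -\pi/2$ this multiplier is $-i$. Since $c \mapsto 1 - \sqrt{1-4c}$ is continuous and the bifurcation locus $\fr M$ (which coincides with the bifurcation locus in $\poly_2(\cc)$) is non-pluripolar in every neighborhood of $c(-\pi/2)$, one may pick a bifurcation parameter $c_0 \in \fr M$ (just outside the main cardioid) at which the multiplier $\rho = 2 z_0$ satisfies $1 < |\rho| < 1.01$ and $|\arg(\rho) + \pi/2| < \pi/50$; then $m = 1/\rho$ lies in the range required by Lemma \ref{lem:IFS2}, up to a harmless rotation. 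Next, exactly as in Step 3.1 of the proof of Theorem \ref{thm:blender}, the activity of the unique critical point $0$ at $c_0$ together with Montel's theorem (applied to three distinct preimages of $z_0$ under some high iterate of $p$) yields an arbitrarily small perturbation $c \in \fr M$ for which $p_c^\ell(0) = z_0$ for some $\ell \geq 1$, the orbit $p_c(0), \ldots, p_c^{\ell-1}(0)$ contains no critical point, and the quantitative conditions on $m$ are preserved.

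With this $c$ I would run the argument of Theorem \ref{thm:blender} with the following adaptations. After translating $z_0$ to the origin and rescaling by $\tilde z = \delta^{-1} z$, $\tilde w = \unsur{2}\kappa^{-1/2} w$ (the exponent is now $1/2$ in place of $1/3$), the perturbation $f_\e(z,w) = (p(z) + \e w, w^2 + \kappa)$ with $\e = -\delta \alpha (1-|m|)/\kappa^{1/2}$, $\alpha \in (0.7, 0.9)$, produces two rescaled inverse branches whose first coordinate takes the form $\tilde z \mapsto m\tilde z \pm \alpha|m|(1-|m|) + O(|\kappa|^{-1/2}) + O(\delta)$ after conjugation by a rotation of angle $\arg(m)$. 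Remark \ref{rmk:blender2} then identifies the second iterate of this two-branch IFS as a blender on $D(0, 1/10) \times \dd$, persistent under $C^1$-small perturbations and hence in $\hol_2(\pd)$. The anchoring periodic point is simpler than in Lemma \ref{lem:period3}: since $m$ is close to $i$ we have $|m-1| > 1/10$, so the first case of the proof of Lemma \ref{lem:period3} directly furnishes a fixed point $x_\e$ of $f_\e$ itself inside $\E(f_\e) \cap (D(0, \delta/10) \times D_\kappa)$.

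For the post-critical analysis, Step 3.1 of the proof of Theorem \ref{thm:blender} then applies essentially verbatim. Using $p_c^\ell(0) = z_0$, the $\ell$-th iterate $f_0^\ell(\set{0} \times \cc)$ contains the horizontal leaf $\set{0} \times \cc$ through $x_0$; by Proposition \ref{prop:briend duval} there is a sequence $\e_k \to 0$ such that $x_{\e_k}$ lies in the post-critical set of $f_{\e_k}$. For such $\e_k$, the irreducible component of $f_{\e_k}^N(\set{0} \times \cc)$ through $x_{\e_k}$ is a holomorphic disk transverse to the horizontal direction, and the graph transform of Lemma \ref{lem:Wuu} iterated under $f_{\e_k}$ brings it arbitrarily close to the strong unstable manifold $W^{uu}(x_{\e_k})$, an almost flat vertical graph. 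By the blender property this graph meets $\E(f_{\e_k})$, and the intersection is proper and persistent under $\hol_2(\pd)$-perturbations, giving $f_{\e_k} \in \mathring{\bif}$ and hence $f \in \overline{\mathring{\bif}}$. The main obstacle I anticipate is the joint selection of $c$: simultaneously achieving $c \in \fr M$, the tight angular constraint on $m$, and the preperiodicity $p_c^\ell(0) = z_0$; this is handled by density of Misiurewicz points in $\fr M$ together with the Montel perturbation, but care is needed to preserve the quantitative constraints on $m$ throughout.
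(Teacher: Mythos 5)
Your proposal follows essentially the same route as the paper's (sketched) proof: pick $c$ near the point of the main cardioid with neutral multiplier $-i$ so that $m=1/p'(z_0)$ has modulus in $(0.99,1)$ and argument near $\pi/2$, arrange $p_c^\ell(0)=z_0$ by a Montel perturbation, build the two-branch blender via Lemma \ref{lem:IFS2} and Remark \ref{rmk:blender2} after the rescaling $\tilde w=\unsur{2}\kappa^{-1/2}w$, and run Step 3.1 of Theorem \ref{thm:blender} (fixed point in the blender, Proposition \ref{prop:briend duval}, graph transform) to get the robust post-critical intersection. One concrete correction: you take $\alpha\in(0.7,0.9)$, which is the range $A'$ used in the $d\geq 3$ lemma, but the two-branch Lemma \ref{lem:IFS2} requires $0.9<\abs{\alpha}<1$ — for $\abs{\alpha}$ near $0.7$ the effective coefficients $\beta=\frac{(m\pm1)\alpha}{\abs{m}+1}$ of the squared IFS have modulus about $0.49<\frac35$, so Lemma \ref{lem:IFS} no longer applies; take $\alpha$ close to $1$ (the paper uses $0.95$). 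Also, your appeal to non-pluripolarity of $\fr M$ near $c(-\pi/2)$ does not by itself place a bifurcation parameter inside the (non-open-neighborhood) region where the multiplier constraints hold; the clean argument is the satellite-cycle computation of Corollary \ref{cor:parabolic}, which you could cite directly.
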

 
\begin{proof}[Proof (sketch)] 
Pick $c\in M$ such that $z^2+c$ has a fixed point $z_0$ whose multiplier $2z_0 =: m\inv$  satisfies $0.99 <\abs{m} <1$ and $\abs{\arg(m) -\frac{\pi}{2}}< \frac{\pi}{50}$. 
(The existence of such a parameter follows exactly as in the previous corollary, by starting from the boundary of the main cardioid and perturbing in the appropriate direction.)  
Shifting $c$ slightly we can further arrange that the critical point $0$ falls onto $z_0$ under iteration.  
Arguing as in Step 2 of the proof of Theorem  \ref{thm:blender} (using Lemma \ref{lem:IFS2} and Remark \ref{rmk:blender2}) shows that 
the perturbation $f_\e(z,w) = (z^2+c+\e w, w^2+\kappa)$ which  
  admits  a blender in $D(z_0, \delta)\times D({0, 2\abs{\kappa}^{1/2}})$ for $\delta = \frac{\abs{\kappa}^{1/2} \abs{\e}}{0.95(1-\abs{m})}$, 
  when $\e$ is small and $\kappa$ is larger than some absolute constant. Finally the argument given in the third step of the proof  
   implies the existence of a sequence $\e_j\cv 0$ such that the post-critical set of $f_{\e_j}$ 
   robustly intersects this blender, and the result follows.    We leave the reader fill   the details.
   \end{proof}

  \section{Further considerations and open problems}\label{sec:further}
  
  \subsection{Bifurcations of saddle sets}\label{subs:newhouse}
%
%
%
%
In this paragraph we show that  persistent homoclinic tangencies, hence robust homoclinic bifurcations, 
 can coexist with $J^*$-stability.   This implies that the robust bifurcations constructed in this paper are {\em not} induced by the Newhouse phenomenon. This also highlights 
the large gap between  $J^*$-stability and structural stability on $\pd$. 

\begin{thm}\label{thm:newhouse}
There exists a $J^*$-stable family 
of holomorphic endomorphisms of $\pd$ whose members possess 
 a horseshoe with  a generic tangency between the  stable and unstable laminations. 
 \end{thm}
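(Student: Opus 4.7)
The plan is to combine Buzzard's construction \cite{buzzard} of polynomial automorphisms of $\cd$ with persistent homoclinic tangencies with the ``embedding'' of such automorphisms into holomorphic endomorphisms of $\pd$ alluded to in \cite[\S 6]{hp} and \cite{gavosto}. The point to establish is that, after embedding, the Newhouse horseshoe lives on a saddle (not repelling) invariant set disjoint from $J^*$, so that its generic tangencies coexist with $J^*$-stability; this is morally why the Newhouse mechanism is orthogonal to the blender/topological mechanisms driving Theorems \ref{thm:topology} and \ref{thm:blender}.

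First I would invoke Buzzard's theorem to obtain a holomorphic 1-parameter family $(g_t)_{t\in\dd}$ of polynomial automorphisms of $\cd$ of sufficiently high degree $d$, each admitting a hyperbolic horseshoe $H_t$ carrying a generic (hence $C^1$-persistent) tangency between the stable and unstable laminations. All $H_t$ may be taken to lie in a fixed compact region $U\Subset \cd$.

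Next I would apply the embedding construction of Hubbard--Papadopol and Gavosto to extend each $g_t$ to a holomorphic endomorphism $f_t$ of $\pd$ of some degree $D$. Using the freedom in this construction, one arranges that $f_t$ is $C^1$-close to $g_t$ on $U$ while coinciding, outside of a slightly larger compact set $U' \Subset \cd$, with a fixed $t$-independent holomorphic endomorphism $f_\infty$ of $\pd$ chosen once and for all. By the robustness of uniformly hyperbolic sets and of generic homoclinic tangencies under $C^1$-perturbation, each $f_t$ inherits a horseshoe $\widetilde H_t \subset U$ with a generic tangency between its stable and unstable laminations, which gives the horseshoe-with-tangency part of the statement.

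It then remains to check that $(f_t)_{t\in\dd}$ is $J^*$-stable. I would use criterion $(i)$ of Theorem \ref{thm:bbd} and show that the Lyapunov sum $L(t)=\chi_1(t)+\chi_2(t)$ is constant in $t$. The horseshoe $\widetilde H_t$ is saddle and of sub-maximal topological entropy, so the measure of maximal entropy $\mu_{f_t}$ should not charge $U$. Combined with $f_t \equiv f_\infty$ outside $U'$, this yields
\begin{equation*}
L(t) \;=\; \int_{\pd} \log\abs{\det Df_t}\, d\mu_{f_t} \;=\; \int_{\pd\setminus U'} \log\abs{\det Df_\infty}\, d\mu_{f_\infty},
\end{equation*}
which is manifestly $t$-independent and hence trivially pluriharmonic; Theorem \ref{thm:bbd} then yields $J^*$-stability.

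The main obstacle is the dynamical control of $\mu_{f_t}$: one must show both that $\mu_{f_t}(U)=0$ and that $\mu_{f_t}$ agrees with $\mu_{f_\infty}$ on $\pd\setminus U'$. The first point should follow from Briend--Duval equidistribution of preimages under $f_t$, since iterates of a generic base point stay in $\pd\setminus U'$ after the first preimage step, where $f_t$ is governed by $f_\infty$ alone. If the ``constancy at infinity'' cannot be realized literally, a fallback is to prove pluriharmonicity of $L(t)$ directly via the Green current: one writes $\mu_{f_t}=(dd^c G_{f_t})^2$, observes that $t\mapsto G_{f_t}(z)$ is pluriharmonic off a pluripolar set, and controls the $t$-dependence of the integral defining $L(t)$ through the explicit parametric dependence of the correction term.
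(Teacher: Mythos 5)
Your proposal captures the right high-level idea (Buzzard's automorphism embedded in $\pd$, horseshoe disjoint from $J^*$, use a stability criterion from Theorem \ref{thm:bbd}), but the central construction is not possible in the holomorphic category. You ask to arrange that $f_t$ is $C^1$-close to $g_t$ on $U$ while \emph{coinciding} with a fixed $f_\infty$ outside a slightly larger compact set $U'$. For holomorphic endomorphisms of $\pd$ this is ruled out by the identity principle: two holomorphic maps agreeing on the open set $\pd\setminus \overline{U'}$ agree everywhere, so $f_t\equiv f_\infty$ for all $t$ and the family is trivial. This ``cutoff-at-infinity'' device is a $C^\infty$ technique that does not survive passage to holomorphic dynamics, and the entire Lyapunov-sum computation rests on it; the ``fallback'' via Green currents is likewise not a substitute, since the asserted control on the $t$-dependence is precisely what one is trying to prove.

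The paper's actual proof circumvents this by being fully explicit. It first rewrites Buzzard's automorphism as a composition $F = h^-\circ h^+$ of two complex H\'enon maps of the \emph{same} degree $d$, using the involution $\iota(z,w)=(w,z)$ and a diagonal linear map. Each factor is then deformed to
\[ h_\e^\pm(z,w) = (w + \e z^d,\ c^{\pm 1}z + p^\pm(w)), \]
which for $\e\neq 0$ is a genuine degree-$d$ endomorphism of $\pd$, so $F_\e = h_\e^-\circ h_\e^+$ is a holomorphic endomorphism of degree $d^2$. On compact sets $F_\e$ is $C^1$-close to $F$, so the horseshoe with its generic tangency persists. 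The $J^*$-stability is then obtained via Lemma \ref{lem:heps}: a suitably rescaled annular region $V_\e = \{\tfrac12\e^{-\beta}<\abs z<\tfrac32\e^{-\beta},\ \abs w<\e^{-\alpha}\}$ (with $\beta = \tfrac1{d-1}$, $\beta/d<\alpha<\beta$) satisfies $h_\e^{-1}(V_\e)\Subset V_\e$ and $h_\e: h_\e^{-1}(V_\e)\to V_\e$ is an \emph{unbranched} covering. Applying this to $h_\e^\pm$ in turn shows $F_\e^{-1}(V_\e)\Subset V_\e$ with $F_\e$ an unbranched $(2d)^2$-cover, so $J^*(F_\e)\subset V_\e$ stays far from the horseshoe and the post-critical set never meets $J^*$; preimages inside $V_\e$ move holomorphically with the parameters $(c,p^\pm)$, giving $J^*$-stability directly. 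In short: where you try to freeze the map at infinity, the paper freezes the \emph{location} of $J^*$ and shows the dynamics there is critical-point-free, which is the statement that can actually be proved.
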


\begin{proof}
The idea is to embed a polynomial automorphism $h$ of $\cd$
  with a robust  homoclinic  tangency in an endomorphism of $\pd$ (this idea  already 
 appears in Buzzard \cite{buzzard} and Gavosto \cite{gavosto}). What we need to do is to arrange so that $J^*$-stability holds. 
 
 \medskip 
 
%
   
 The first observation is that we can choose $h$ to be a product of two complex Hénon mappings   of the same degree. Indeed, the  automorphisms constructed by Buzzard are of the form $F = F_3\circ F_2 \circ F_1$, where $F_1(z,w) = (z+f(w), w)$, $F_2(z,w)  = (z, w+g(z))$ and $F_3 (z,w)= (c z, c\inv w)$ 
 (see \cite[p.394]{buzzard}). At the initial stage of Buzzard's argument, $f$ and $g$ are holomorphic mappings defined on certain open
  subsets of $\cc$, which are later approximated by polynomials. Thus we can assume that $f$ and $g$ are polynomials of the same (large)
   degree $d$. 
 Let $\iota$ be the involution $(z,w)\mapsto (w,z)$   
and write $F$ as 
 $(F_3\circ F_2\circ \iota)\circ (\iota\circ F_1)$. Then 
 $\iota \circ F_1$ is a conservative Hénon map, but   $F_3\circ F_2\circ \iota$ is not of the right form. So we 
  introduce the  linear map $\ell: (z,w)\mapsto (z,c w)$, and write 
  $$F=(F_3\circ F_2\circ \iota\circ \ell\inv )\circ (\ell  \circ \iota\circ F_1) := h^-\circ h^+.$$ We 
  leave the reader check that $h^-$ and $h^+$ are of the form $(z,w)\mapsto (w, c^{\pm 1} z+p^\pm(w))$, with $p^\pm$ of degree $d$, as desired. 
    
  \medskip
  
  With notation as above introduce $F_\e = h^-_\e\circ h^+_\e$, where $$h^\pm_\e (z,w) = (w+ \e z^d , c^{\pm 1} z+p^\pm(w)).$$ This is a holomorphic endomorphism of $\pd$ which on every given compact subset in $\cd$ 
   can be seen as a small perturbation of $F$.    In particular if we fix $\e$   small enough and  let  $c$ and $p^\pm$ vary we get a holomorphic family of endomorphisms of $\pd$  with a saddle set exhibiting a persistent generic tangency between its stable and unstable laminations. 
   
 What remains to do is to show that this family is $J^*$-stable.  This will be a consequence of the  following lemma. 
 
 \begin{lem}\label{lem:heps}
 Let $h_\e(z,w)  =(w+ \e z^d, c z + p(w))$, where $p$ is a polynomial of degree $d$.  Let $\beta = \unsur{ d-1}$, 
 $\alpha$ be a real number such that
 $ \beta/d <\alpha < \beta$ and  
 $$V_\e = \set{(z,w)\in \cd, \unsur{2} \e^{-\beta} <\abs{z} <  \frac32  \e^{-\beta}, \ \abs{w} < \e^{-\alpha}}.$$
  Then for  $\e$ sufficiently small (locally uniformly with respect to the parameters $c$ and $p$),
   $h_\e\inv(V_\e)\Subset V_\e$ and   $h_\e : h_\e\inv(V_\e)\to V_\e$ 
  is an unbranched  covering.
  \end{lem}
   
     Applying this lemma successively to $h_\e^+$ and $h_\e^-$, we deduce  that for small $\e$, $F_\e\inv (V_\e)\Subset V_\e$ and 
   $F_\e : F_\e\inv(V_\e)\to V_\e$ is a covering of degree $(2d)^2$. Hence 
   $J^*(F_\e) \subset V_\e$, and since  in addition  preimages in $V_\e$ can be followed locally 
   holomorphically with the parameters, it follows that the family obtained by fixing $\e$ and varying $c$ and $p^\pm$ 
   is locally $J^*$-stable.
   \end{proof}

\begin{proof}[Proof of Lemma \ref{lem:heps}]
To ease notation, without loss of generality we assume that $c=1$. We rescale the  coordinates by putting $\tilde z = \e^{-1/(d-1)}z  = \e^{-\beta} z$ and 
$\tilde w = w$. In the new coordinates (still denoted by $(z,w)$ for convenience) $h_\e$ becomes
$$ h_\e: (z,w) \mapsto \lrpar{\e^\beta   w+    z^d, \e^{-\beta}   z + p\lrpar{  w}}  $$ 
and $V_\e$ becomes $A(1/2, 3/2)\times D(0, \e^{-\alpha})$, where $A(1/2, 3/2)$ is the open annulus bounded by the circles of radii 1/2 and 3/2. 

Let now $(u,v)\in V_\e$ and $(z,w) $ be such that $h_\e(z,w)  = (u,v)$, that is:
$${ \it (i)}  \; 
   \e^\beta   w+    z^d = u \text{  and } {\it (ii)} \;  \e^{-\beta}   z + p\lrpar{  w} = v. $$
Then we claim that $\e^\beta   \abs{w} \ll    \abs z^d$. Indeed otherwise since $\abs u\approx  1$ we get that 
$\e^\beta  \abs w \gtrsim 1$ hence $\abs w \gtrsim \e^{-\beta}$, therefore $\abs {p(w)} \gtrsim \e^{-d\beta}$. Since $d\beta > \alpha$, this implies that 
$\abs {p(w)} \gg \abs v $ hence by {\em (ii)} $\e^{-\beta} \abs{z}  \approx \abs {p(w)}\approx \abs w^d\gtrsim \e^{-d\beta}$, so 
$\abs{z}  \gtrsim \e^{-\beta (d-1)}  = \e^{-1}$. Plugging this back into {\em (i)}, we see that   $\e^\beta   \abs{w} \approx   \abs z^d$. Together with 
$\e^{-\beta} \abs{z}\approx \abs w^d$, this yields $\abs z \approx \e^{\beta/(d+1)} = \e^{1/(d^2-1)}   = o(1)$. 
This is contradictory, so the claim is proved. 

Since $\e^\beta   \abs{w} \ll    \abs z^d$, the equation {\em (i)} admits $d$ unramified solutions in $z$, 
close to the $d^{\rm th}$ roots of $u$. Therefore  $\e^{-\beta} \abs{z} \approx \e^{-\beta} \gg \abs{v}$ so for each such $z$,
solving {\em (ii)} in the variable 
  $w$ gives $d$ solutions satisfying 
   $\abs w ^d\approx \e^{-\beta}$, that is $\abs w \approx \e^{-\beta/d}$, and since $\beta/d < \alpha$  these solutions belong to $D(0, \e^{-\alpha})$.
   
Finally, the critical set is the curve of equation $dz^{d-1}p'(w) =1$. If $(z,w)\in \crit(h_\e)\cap V_\e$ we have    
$\abs{z}\approx 1$ so  we get that $\abs{w} = O(1)$. Thus the second coordinate of $h_\e(z,w)$ is of order of magnitude
$\e^{-\beta} \gg \e^{-\alpha}$. This means that critical points escape $V_\e$ after  one iteration (i.e. $h_\e(\crit(h_\e)\cap V_\e)\cap V_\e = \emptyset$) 
and we conclude that $h_\e$ is unbranched in $h_\e\inv(V_\e)$.
\end{proof}

\begin{rmk}
Another consequence of Theorem \ref{thm:newhouse}
 is that there exists a $J^*$-stable family whose generic members have 
 with infinitely many repelling periodic points outside $J^*$. 
Indeed since the polynomial automorphism 
 $F$ is conservative, by a small perturbation we can choose the Jacobian to be either smaller of larger than 1. In the latter case, it is well known that
 the persistent homoclinic tangency gives rise to a residual set of parameters with infinitely many sources.
\end{rmk}

     \subsection{Higher dimension}\label{subs:higher}
So far we have concentrated on complex dimension 2. The results of Section \ref{sec:blender} can actually be adapted to an arbitrary number of dimensions. 
Let us only state one result, which guarantees the existence of open sets in the bifurcation locus in $\hold(\pk)$ for every $d\geq 2$ and every $k\geq 3$. 

\begin{thm}\label{thm:higherdim}
Let $f$ be a polynomial mapping in $\cd$ of the form $f(z,w)  = (p(z), w^d+\kappa)$, where $p\in \poly_d$ admits a fixed point $z_0$
of multiplier $1<\abs{p'(z_0)}<1.01$ and belongs to the bifurcation locus in $\poly_d$ (if $d=2$ we further require that $\abs{\arg({p'(z_0)})- \pi/2}<\pi/50$). 

Let $g$ be a regular polynomial mapping in $\cc^{k-2}$ admitting a repelling fixed point in $J^*$ with eigenvalues 
 larger than 2. 

Then if $\abs{\kappa}> \kappa_0(d,k)$ is sufficiently large, 
 the product   map $(f,g)$   belongs to  the closure of the interior of the bifurcation locus  in $\hold(\pk)$. 
 \end{thm}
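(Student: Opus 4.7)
The plan is to reduce to the two-dimensional case already treated in Section \ref{sec:blender} and to promote the blender-type Misiurewicz bifurcation for the first factor to a robust Misiurewicz bifurcation for the product in $\hold(\pk)$.

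First I would apply Theorem \ref{thm:blender} (or Theorem \ref{thm:blender2} when $d=2$) to obtain, for every sufficiently large $|\kappa|$, a sequence $\e_j\to 0$ and perturbations $f_{\e_j}\in\hold(\pd)$ lying in $\mathring{\bif}$: for each such $\e_j$ there is a basic repeller $\mathcal{E}(f_{\e_j})\subset J^*(f_{\e_j})$ (the blender) and a post-critical component $V_j$ of $f_{\e_j}^{N_j}(\crit(f_{\e_j}))$ containing a point $x_j\in\mathcal{E}(f_{\e_j})\cap V_j$ at which the intersection is proper and persists under arbitrary perturbations in $\hold(\pd)$.

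Next I would form the product $F_{\e_j}:=(f_{\e_j},g)$ on $\cc^k$. The regularity of $g$ ensures that the top-degree parts of $F_{\e_j}$ have no common zero away from the origin, so $F_{\e_j}$ extends to a holomorphic endomorphism of $\pk$ of degree $d$; a comparison of topological entropies gives $\mu_{F_{\e_j}}=\mu_{f_{\e_j}}\otimes\mu_g$ and hence $J^*(F_{\e_j})=J^*(f_{\e_j})\times J^*(g)$. The set $E:=\mathcal{E}(f_{\e_j})\times\{z_g\}$ is then an invariant, transitive, uniformly expanding basic repeller contained in $J^*(F_{\e_j})$, the expansion in the $k-2$ transverse directions being guaranteed by the hypothesis that all eigenvalues of $Dg(z_g)$ exceed $2$ in modulus. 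Since $\crit(F_{\e_j})=(\crit(f_{\e_j})\times\cc^{k-2})\cup(\cd\times\crit(g))$, the hypersurface $V_j\times\cc^{k-2}$ is a post-critical component passing through $(x_j,z_g)\in E$.

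The core step is to show that this intersection is robust not just in the product sub-family, but in the whole of $\hold(\pk)$. Here I would establish a higher-dimensional analogue of Lemma \ref{lem:IFSC2}: in the rescaled coordinates of the proof of Theorem \ref{thm:blender} augmented by local coordinates near $z_g$ in which $g^{-1}_{z_g}$ becomes a linear contraction of ratio less than $1/2$, the inverse branches of $F_{\e_j}$ defined on a polydisk form an IFS with the property that every complex codimension-$1$ graph over the last $k-1$ coordinates with sufficiently small slope meets the limit set. The proof runs essentially verbatim as that of Lemma \ref{lem:IFSC2}, the generous contraction $\|Dg^{-1}(z_g)\|<1/2$ ensuring that the graph transform preserves the space of small-slope codimension-$1$ graphs, and this property is stable under $C^1$-small perturbation of the IFS. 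For any $F'\in\hold(\pk)$ close to $F_{\e_j}$, the hyperbolic continuation $E(F')$ of $E$ (Lemma \ref{lem:continuation}) provides the perturbed IFS, while the component $V_j\times\cc^{k-2}$ continues as a smooth codimension-$1$ subvariety $W(F')$ of small slope away from the singular stratum $\cd\times\crit(g)$. The higher-dimensional blender lemma then supplies an intersection point in $W(F')\cap E(F')$, and combined with the $f$-direction properness this yields a Misiurewicz bifurcation at $F'$ in the sense of Proposition-Definition \ref{propdef:basic}. Thus $F_{\e_j}\in\mathring{\bif}$, and letting $\e_j\to 0$ gives $(f,g)\in\overline{\mathring{\bif}}$ as desired.

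The main obstacle is the higher-dimensional blender lemma together with its robustness to non-product perturbations: one must verify that the covering estimates of Lemmas \ref{lem:IFS} and \ref{lem:IFSC2} absorb the additional contracting directions coming from $g$ without deterioration, and that the post-critical component $V_j\times\cc^{k-2}$ continues as a bona fide codimension-$1$ graph of small slope for arbitrary perturbations in $\hold(\pk)$, not only along the natural product slice $\hold(\pd)\times\hold(\pp^{k-2})$.
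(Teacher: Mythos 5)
Your proposal follows essentially the same route as the paper's proof: form the product $F_\e=(f_\e,g)$, rescale near $(0,y_0)$, establish a $k$-dimensional blender lemma (the paper's Lemma~\ref{lem:IFSCk}) with the $y$-direction contraction ratio $<1/2$ supplied by the repelling eigenvalues $>2$, and show that a multiplicity-one piece of the post-critical set continues as a small-slope codimension-one graph for arbitrary perturbations in $\hold(\pk)$. One detail you gloss over that the paper handles explicitly: the slope threshold in the $k$-dimensional covering lemma picks up a factor $\sqrt{k-1}$ (the diameter of the projection of a graph over $\dd^{k-1}$ grows like $\sqrt{k-1}$), which is why $\kappa_0$ must depend on $k$ --- one re-invokes Lemma~\ref{lem:Wuu} with a larger $\kappa$ to bring the slope of the strong-unstable graph below $\frac{1}{100\sqrt{k-1}}(1-|m|)$, rather than taking the 2D slope bound as a black box.
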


\begin{proof} 
We focus on the case $d=3$ and leave the adaptation to  degree 2 to the reader. 
We keep notation as in the proof of Theorem  \ref{thm:blender}  and  explain how to generalize the argument. 
Put  $x = (z,w)$ and denote   by $y$ the  variable in
 $\cc^{k-2}$. A preliminary observation is that 
$F(x,y):=(f(x),g(y))$   defines a regular polynomial mapping in $\cc^k$ so it extends holomorphically to $\pk$.  Let $y_0$ be the repelling fixed point for $g$ 
considered in the statement of the theorem, and recall that $\E(f)  = \set{0}\times J_{w^d+\kappa}$ is a basic repeller contained in $J^*(f)$. Therefore $\E(F)  := 
\E(f)\times \set{y_0}$ is a basic repeller contained in $J^*(F)$ and this property persists for its  continuation after  a small perturbation of $F$ in $\hold(\pk)$ by Lemma  \ref{lem:continuation}. 

Let $f_\e(x) = f_\e(z,w) = (p(z)+\e w, w^d+ \kappa)$ as before, and put $F_\e (x,y ) = (f_\e(x), g(y))$. 
Recall from Theorem \ref{thm:blender}  that after translation and rescaling of the $(z,w)$ coordinate, when $\e$ is small and $\kappa$ is large, 
the continuation of 
 $ \set{0}\times J_q$ defines  a blender type repeller for $ f_\e$  in the unit bidisk. 
 By assumption, there exists $\eta>0$ such that $g$ admits a contracting 
  inverse branch mapping $y_0$ to itself, of derivative norm smaller than $1/2$
 in the polydisk centered at $y_0$  and  of radius $\eta$. 
 We translate and rescale the $y$ coordinate by putting $\tilde y  = \eta\inv (y - y_0)$. In the new coordinates $(\tilde x, \tilde y)$, the resulting map 
 $\tilde F_\e$ defines an IFS 
 on $\dd^k$ with $d$ branches for every small enough $\e$. 
 
 \medskip
 
 The mechanism leading to robust intersections between $\E(F)$ and the post-critical set will be based on the following higher dimensional version of Lemma \ref{lem:IFSC2}, which is worth stating precisely. 
 
  \begin{lem}\label{lem:IFSCk}
Let $d\geq 3$ and $\el = (L_j)_{j=1}^d$ be an IFS in $\dd^k$ generated by biholomorphic
 contractions of the form 
$$ L_j(z,\omega) = (\ell_j(z), \varphi_j(z,\omega)),  $$ with $(z,\omega) \in \dd\times  \dd^{k-1}$,   and let $\mathcal E$ be its limit set. 
Assume that  $\ell_j$ is of the form
$$\ell_j( z)= m z + \alpha_j (1-\abs{m})
  e^{\frac{2\pi i }{d}j},\text{  where } 0.98<\abs{m} <1\text{ and  }\alpha_j\in A' \text{ (see \eqref{eq:A'}),}$$ 
   and $\varphi_j:\dd^k\cv\dd^{k-1}$ is a holomorphic map 
 such that  $ \norm{ {\fr_z}\varphi_j }< 1$ and  $\norm{ {\fr_\omega}\varphi_j }< \frac12$. 
 
 Then    any  
 vertical graph $\Gamma$  of the form $z = \gamma(\omega)$ 
 intersecting $D(0, \unsur{10})\times \dd^{k-1}$, whose slope satisfies  $ \norm {d\gamma}\leq \frac{1}{100\sqrt{k-1}}(1-\abs{m})$   intersects $\mathcal{E}$. 
 
Furthermore, the same holds for any IFS $\overline{\el}$
 generated by $(\overline  L_j)_{j=1}^d$, whenever the $C^1$ norm of 
 ${L_j - \overline L_j}$ is bounded by $\frac{1}{1000\sqrt{k-1}}(1-\abs{m}).$
 \end{lem}
 
 The proof is identical to that of Lemma \ref{lem:IFSC2}. The appearance of the factor $\sqrt{k-1}$ in the constants comes from the fact 
 that the diameter of the first projection of $\Gamma$  is now bounded by  $  \mathrm{diam}(\dd^{k-1})\times \mathrm{slope}(\Gamma)
  = 2\sqrt{k-1} \norm{d\gamma}$.
 Under the above assumptions on $\e$, $\kappa$ and $\eta$, in the rescaled coordinates (and possibly after rotation in $z$), the IFS  induced by $F_\e$ on 
 $\dd^k$  satisfies the hypotheses of the lemma (see \eqref{eq:tildef}).

 \medskip
 
 The third step of Theorem \ref{thm:blender} shows that, possibly   after a preliminary perturbation of $p$,
   there exists a sequence of parameters $\e_j\cv 0$ and for every  $k$
   an integer $N' = N+3n$ and a   component   of multiplicity 1
    of $\crit{f_{\e_j}}$ of the form $\set{c'}\times \cc$, 
 such that (after coordinate rescaling) $f_{\e_j}^{N'}(\set{c'}\times \cc)$ contains a   vertical graph  $\Gamma$ 
 in the unit bidisk, satisfying the assumptions of Lemma \ref{lem:IFSC2}. Remark that 
 by Lemma \ref{lem:Wuu}, by increasing 
 $\kappa$ we can make the slope of this graph  smaller than $\frac{1}{100\sqrt{k-1}}(1-\abs{m})$. 
 
   Going to   dimension $k$,  
    $\set{c'}\times\cc^{k-1}$ is a  component of multiplicity 1 of $F_\e$  and the previous discussion shows that  (after rescaling)  
 $F_{\e_j}^{N'}(\set{c'}\times\cc^{k-1})\cap \dd^k$ admits  a component 
 of the form $\Gamma\times \cc^{k-2}$, hence satisfying the requirements of Lemma 
 \ref{lem:IFSCk}.  In addition, this vertical graph is the image of a piece of  $\Gamma\times \cc^{k-2}$
 contained in  $\set{c'}\times \Delta \times U$, where $\Delta$ is a disk in $\cc$ close to a cube root of $\kappa$ and 
 $U$ is a small neighborhood of the repelling point $y_0$. 
Increasing $N'$ further if necessary, we can assume that $U$ is disjoint from $\crit(g)$. Since $\crit(F_\e) = \pi_1\inv(\crit (f))\cup    
 \pi_2\inv(\crit (g))$, we infer that $\crit(F_\e)$ is smooth along $\set{c'}\times \cc \times U$. 
 
 We are now ready to conclude that $F_{\e_j}\in \mathring{\bif}$. Indeed 
  if $G$ is a small perturbation of $F$ in 
 $\hold(\pk)$, we can lift $\set{c'}\times \Delta \times U$ to an open cell in $\crit(G)$, whose image $\Gamma(G)$ 
 under $G^{N'}$ is close to  
 $\Gamma\times \cc^{k-2}$. So Lemma \ref{lem:IFSCk} implies that $\Gamma(G)\cap \E (G)\neq \emptyset$, and we are done. 
 \end{proof}

  \subsection{Open problems}\label{subs:open}
The constructions of Section \ref{sec:blender} raise a number of interesting problems. As already said,  a natural question after 
Theorem \ref{thm:blender} (and Corollary \ref{cor:parabolic}) is whether in these statements the 
fixed point $z_0$ can be assumed to be periodic instead of fixed. 
Using the technique of Corollary \ref{cor:parabolic} one could then simply replace the hypothesis on $p$ in Theorem \ref{thm:blender}
 by ``$p$ belongs to the bifurcation locus". 
The difficulty is that 
our technique relies on perturbations of a specific form like $(p(z)+\e w, q(w))$, which does not {\em a priori}  behave nicely under iteration.

More generally we believe that the following is true:

\begin{conjecture}
Let $f(z,w) = (p(z), q(w))$ be a  product of two polynomial maps of degree $d$ in $\cc$. Then if $p$ or $q$ belongs to the bifurcation locus in $\poly_d(\cc)$, 
$f$ belongs to the closure of the interior of the  bifurcation locus in $\hold(\pd)$.
\end{conjecture}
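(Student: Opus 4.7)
My plan is to adapt the blender construction of Section \ref{sec:blender} so that the factor $q$ need not have the special form $w^d+\kappa$. The first step is a series of reductions inside $\poly_d$. Without loss of generality assume $p\in\poly_d$ bifurcates. Combining Lemma \ref{lem:active} with the parabolic implosion argument at the end of Corollary \ref{cor:parabolic}, I can replace $p$ by an arbitrarily close $\tilde p$ in the bifurcation locus possessing a repelling periodic point $z_0$ of some period $n$ with multiplier $\mu$ satisfying $1<|\mu^n|<1.01$ (and, when $d=2$, $|\arg(\mu^n)-\pi/2|<\pi/50$), and a critical point $c$ whose orbit hits $z_0$ after finitely many iterates. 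Since $(\tilde p, q)$ approximates $(p,q)$ in $\hold(\pd)$, it is enough to treat the case where $p$ itself enjoys these properties.

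Next, one must replace the role of $w^d+\kappa$ in the second coordinate by an arbitrary polynomial $q$ of degree $d$. I would pick a repelling periodic point $w_0\in J_q$ of some period $m$ with multiplier $\nu=(q^m)'(w_0)$ of large modulus. Such points abound because by Brolin-Lyubich the Lyapunov exponent of the maximal entropy measure of $q$ equals $\log d\geq \log 2$, so repelling cycles of arbitrarily large multiplier are dense in $J_q$. Passing to an appropriate iterate $f^{nmN}=(p^{nmN},q^{nmN})$ near $(z_0,w_0)$, and introducing a localized perturbation of the form $f_\e(z,w)=(p(z)+\e h(w),q(w))$ for a well-chosen polynomial $h$, I would seek to produce a blender-type basic repeller inside a small neighborhood of the continuation of $(z_0,w_0)$: the $z$-direction is almost isometric (as in Theorem \ref{thm:blender}), the inverse branches of $q^{nmN}$ returning near $w_0$ are strong contractions in $w$, and the perturbation transfers a controlled horizontal displacement from the $w$-coordinate to the $z$-coordinate.

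The crux, and the step where I expect the argument to be hardest, is recovering the covering property of Lemmas \ref{lem:IFS}--\ref{lem:IFSC2} in this more general setting. For $q(w)=w^d+\kappa$ with $\kappa$ large, the $d$ inverse branches land approximately at the $d$-th roots of $\kappa$, so the resulting IFS in $z$ consists of affine maps with nearly rotationally symmetric translations, which is precisely what drives Lemma \ref{lem:IFS}. For arbitrary $q$ the preimages of $w_0$ under $q^{nmN}$ equidistribute (as $N\to\infty$) with respect to the maximal entropy measure of $q$, but they carry no rotational symmetry. A plausible route is to select, from among the exponentially many preimages of $w_0$ in a fixed neighborhood, a carefully chosen finite subfamily of inverse branches whose translates produce an IFS in $z$ satisfying a covering property analogous to Lemma \ref{lem:IFSC2}. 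This is likely to require a more flexible blender criterion than those in \S\ref{subs:IFS}, closer in spirit to the original Bonatti-Diaz construction, and is the real heart of the problem.

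Assuming the blender $\E(f_\e)$ has been built, the conclusion follows the template of the last two steps of Theorem \ref{thm:blender}. The critical curve $\set{c}\times\cc$ maps, after an appropriate number of iterates, to a collection of almost-vertical graphs in the rescaled bidisk, one of which meets the blender by the covering property; the strong unstable manifold argument of Lemma \ref{lem:Wuu} then guarantees that this intersection is robust under arbitrary (not necessarily product) perturbations of $f_\e$ in $\hold(\pd)$. By Proposition-Definition \ref{propdef:basic} this places $f_\e$ in $\mathring{\bif}$ for $\e\neq 0$ small, and letting $\e\to 0$ shows $f\in\overline{\mathring{\bif}}$, as desired.
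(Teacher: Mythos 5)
The statement you are proving is not a theorem of the paper: it is stated there as a \emph{conjecture} in \S\ref{subs:open}, explicitly left open by the author. Your text is accordingly a programme rather than a proof, and the two places where you yourself signal difficulty are precisely where the paper's techniques are known to break down, so the gaps are genuine and not merely expository.

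First, your reduction of $p$ to a polynomial with a repelling \emph{periodic} point of period $n>1$ and multiplier $\mu^n$ close to $1$ does not feed into the blender construction. Theorem \ref{thm:blender} requires a \emph{fixed} point, and the paper states explicitly that replacing ``fixed'' by ``periodic'' is an open question, the obstruction being exactly the step you gloss over: the perturbation $(p(z)+\e h(w),q(w))$ does not behave well under iteration. When you pass to $f_\e^{nmN}$, the first coordinate is $p^{nmN}(z)$ plus error terms in which the $\e h(w)$ contributions are propagated through the derivatives of $p$ along the orbit of $z_0$; these terms are of order $\e\,\norm{(p^{j})'}$ rather than $\e$, and you have no mechanism to keep the resulting IFS within the $C^1$ tolerance $\frac{1}{1000}(1-\abs{m})$ demanded by Lemma \ref{lem:IFSC2}. (Corollary \ref{cor:parabolic} sidesteps this only because the low-multiplier repelling point it produces by parabolic implosion is again a \emph{fixed} point of the perturbed polynomial.) Second, the covering property for a general $q$: the rotational symmetry of the $d$-th roots of $\kappa$ is what makes the translation vectors $\alpha_j e^{2\pi i j/d}$ of Lemma \ref{lem:IFS} cover a neighborhood of the origin, and for an arbitrary $q$ the displacements induced by the inverse branches returning near $w_0$ come with no such structure. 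Your suggestion to select a suitable subfamily from the equidistributing preimages is plausible but is not an argument — among other things the selected branches must simultaneously satisfy the covering estimate, the contraction bounds of Lemma \ref{lem:IFSC2}, and local maximality of the resulting repeller, and none of this is carried out. Until these two steps are supplied, the conjecture remains open.
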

 
In other words, the bifurcation locus restricted to the subfamily $\poly_d\times \poly_d$
 has empty interior, nevertheless it should be accumulated at every point by robust bifurcations in the larger space 
of holomorphic mappings on $\pd$. 
  
  \medskip
  
  In dimension 1, Shishikura's   theorem on the dimension of the boundary of the Mandelbrot set  
   \cite{shishikura}, as well its generalizations to arbitrary spaces of rational mappings by Tan Lei and McMullen \cite{tan lei, mcm universal} 
  are based on the construction of Cantor repellers  of large dimension (i.e. close to  2). 
  In dimension 2, bifurcations are created by collisions between the 
  post-critical set (which has Hausdorff dimension 2) and hyperbolic repellers. 
Intuition from fractal geometry and real dynamics 
leads us to think that generically, a collision between the post-critical set and a hyperbolic 
repeller of dimension {\em larger than}  2 should yield massive bifurcation sets. 
 
 Here is a specific situation where we expect such a phenomenon to happen.
 
 \begin{conjecture}
 Let $f$ be a Lattès map on $\pd$. Then $f$ belongs to the closure of the interior of the bifurcation locus. 
 \end{conjecture}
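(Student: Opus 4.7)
The plan is to exploit two distinctive features of a Lattès map $f$---its Julia set equals $\pd$, and it admits basic repellers of Hausdorff dimension arbitrarily close to $4$---in order to adapt the blender mechanism of Section \ref{sec:blender} and force a robust intersection with the post-critical set. The first step is to produce a large repeller in $J^*(f)$: the Lattès map $f$ descends from a linear endomorphism $L$ of an abelian surface (or a finite quotient), and any sufficiently rich sub-shift of finite type for a Markov partition of $L$ projects to a basic repeller $E\subset\pd$ with $\dim_H E > 2$. Since $J^*(f)=\pd$, we have $E\subset J^*(f)$, so Lemma \ref{lem:continuation} yields a continuation $E(g)\subset J^*(g)$ for every $g$ in a neighborhood of $f$ in $\hold(\pd)$.

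The central step is to upgrade $E(g)$ to a genuine blender. In linearizing coordinates at a repelling periodic point of $E$, the $d^{2k}$ inverse branches of a high iterate $f^k$ become translates of a scalar contraction of ratio $d^{-k/2}$ by points of a pre-periodic lattice, and the plan is to show that after a small perturbation of $f$ in $\hold(\pd)$ in a direction coupling two inverse branches (in the spirit of the term $\e w$ of Theorem \ref{thm:blender}) the resulting IFS satisfies the covering property of an analog of Lemma \ref{lem:IFSC2}. Once this is done, one follows the scheme of the proof of Theorem \ref{thm:blender}: for $f_\e$ in the corresponding one-parameter family, the iterates $f_\e^N(\crit(f_\e))$ deform continuously with $\e$, and as in Step 3.1 together with Lemma \ref{lem:Wuu}, one finds $\e_j\to 0$ and integers $N_j$ so that an irreducible component of $f_{\e_j}^{N_j}(\crit(f_{\e_j}))$ is a graph of small slope over the fat direction of the blender. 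The blender property then forces a persistent intersection with $E(g)$ for $g$ near $f_{\e_j}$, and Proposition-Definition \ref{propdef:basic} gives $f_{\e_j}\in\mathring{\bif}$, hence $f\in\overline{\mathring{\bif}}$.

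The main obstacle is the construction of the blender itself. Lattès dynamics is so rigid that its natural inverse branches are exact scalar contractions with no overlap, so the self-similar repellers produced by the Markov partition carry no blender property on their own. What must be engineered is a perturbation that breaks this rigidity while preserving the hyperbolicity of $E$ and simultaneously aligns a post-critical component with the admissible cone for the blender. This is likely to require a multi-parameter deformation informed by the relative position of the critical curve and of the chosen repelling orbit, and is the step most sensitive to the specific algebraic structure of the Lattès map; the heuristic justification that it should be feasible is precisely the dimensional bound $\dim_H E > 2$, which makes transverse intersections with a $2$-dimensional post-critical disk the expected generic outcome rather than an accident.
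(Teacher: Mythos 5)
There is a genuine gap, and in fact the statement you are addressing is stated in the paper as a \emph{conjecture}: the paper offers no proof, only the heuristic that a Lattès map is semi-conjugate to multiplication on a torus and therefore carries hyperbolic repellers of Hausdorff dimension close to $4$. Your proposal reproduces that heuristic and organizes it into the scheme of Theorem \ref{thm:blender}, but the step you yourself flag as ``the main obstacle''---upgrading the rigid self-similar repeller of a Lattès map to a blender---is precisely the content of the conjecture, and you do not carry it out. Having $\dim_H E>2$ makes an intersection with a post-critical disk the \emph{expected} generic outcome, but the blender property needed for a \emph{robust} intersection is an open covering property (Lemmas \ref{lem:IFS} and \ref{lem:IFSC2}), which a conformal, strictly separated self-similar Cantor set never has on its own; whether a small perturbation inside $\hold(\pd)$ creates it is exactly what is unknown.

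Two concrete points show why the adaptation cannot proceed ``as in Theorem \ref{thm:blender}.'' First, all the covering lemmas of Section \ref{sec:blender} operate in the regime $0.98<\abs{m}<1$: the $d$ branches barely contract, so their images of $D(0,\frac{1}{10})$ overlap enough to cover it. The inverse branches of a Lattès iterate $f^k$ contract by $d^{-k/2}$, far outside this regime, so none of those lemmas apply and a different covering mechanism (using the large \emph{number} of branches rather than weak contraction) would have to be devised and, crucially, shown to survive the constraint that the perturbation stays inside the finite-dimensional space $\hold(\pd)$. Second, your appeal to ``Step 3.1 together with Lemma \ref{lem:Wuu}'' breaks down: that lemma produces a flat strong-unstable graph via a graph transform that uses the dominated splitting and the invariant horizontal foliation $\set{w=\cst}$ of the skew product $(p(z)+\e w, q(w))$. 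A Lattès map is conformal with equal Lyapunov exponents, so there is no dominated splitting, no strong unstable direction, and no analogue of that lemma to align a post-critical component with the admissible cone of the putative blender. The proposal is a reasonable research program---essentially the one the paper itself sketches---but it is not a proof.
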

 
 Indeed, a Lattès example is semi-conjugate to a multiplication on a complex 2-torus, so it admits hyperbolic repellers of dimension arbitrary close to 4. 
 In addition, it is conformal so the geometry of the perturbations of these repellers is expectedly easier to understand than in the general case. 
 Notice that Lattès mappings indeed belong to the bifurcation locus since the sum of the Lyapunov exponents of $\mu_f$ 
is minimal there (see \cite[Thm. 6.3]{bbd}).
  
  \medskip
  
  Finally, on a more ambitious note, one may ask whether the bifurcation locus is the closure of its interior in $\hold(\pd)$ or if on the contrary
   there are regions where the density of stability still holds. 
   As outlined in the introduction, bifurcations are created when a multiplier of a  repelling periodic orbit crosses the unit circle, so
an interesting approach to this problem would be  to understand when blenders are created in this process.

\bigskip


\begin{thebibliography}{[ABCD]}




  \bibitem[BBD]{bbd}
 Berteloot, François; Bianchi, Fabrizio; 
  Dupont, Christophe.
   {\em  Dynamical stability and Lyapunov exponents for holomorphic endomorphisms of $\mathbb{P}^k$}.  Preprint (2014), arxiv 1403.7603
 
 \bibitem[Bia]{bianchi}  Bianchi, Fabrizio. 
{\em Motions of Julia sets and dynamical stability in several complex  variables.} Thèse de l'Université de Toulouse, septembre 2016.

 
\bibitem[BT]{bianchi taflin} Bianchi, Fabrizio; Taflin, Johan. {\em Bifurcations in the elementary Desboves family.} Preprint (2016). 

 
 \bibitem[Bie]{biebler} Biebler, Sébastien. {\em Persistent homoclinic tangencies and infinitely many sinks for 
  of automorphisms of low degree in $\cc^3$}. To appear. 
 
 \bibitem[Bu]{buzzard} Buzzard, Gregery T. {\em  Infinitely many periodic attractors for holomorphic maps of 2 variables.}
 Ann. of Math. (2) 145 (1997),  389--417. 

 
 \bibitem[BD]{bonatti diaz} 
  Bonatti, Christian; Diaz, Lorenzo. 
  \newblock{\em Persistent transitive diffeomorphisms.}   Annals of Math. 143(2) (1996), 357--396. 

\bibitem[BDV]{bonatti diaz viana} 
Bonatti, Christian; Diaz, Lorenzo; Viana, Marcelo. {\em Dynamics beyond uniform hyperbolicity. 
A global geometric and probabilistic perspective. }Encyclopaedia of Mathematical Sciences, 102.   Springer-Verlag, Berlin, 2005.


 \bibitem[BrD]{briend duval} Briend, Jean-Yves; Duval, Julien. {\em Deux
caract\'erisations de la mesure 
d'\'equilibre des endomorphismes de $\mathbf{CP}^k$.}
Publ. Math. Inst. Hautes {\'E}tudes Sci.  No. 93  (2001), 145--159. 

 
 \bibitem[Ch]{chirka} Chirka, Evgueny M.  {\em Complex analytic sets.}  
 Mathematics and its Applications (Soviet Series), 46. Kluwer Academic Publishers Group, Dordrecht, 1989. 
 
 \bibitem[DK]{daroczy katai}  Dar\'oczy, Zolt\'an ;  K\'atai, Imre.  {\em Generalized number systems in the complex plane.}
 Acta Math. Hungar.  51  (1988),  no. 3-4, 409--416.

 \bibitem[DS]{tv} Douady, Adrien; Sentenac, Pierrette. {\em Un tour de valse,} in {\em \'Etude dynamique des polynômes complexes, Partie II. }, by Adrien Douady and John H. Hubbard. Publications Mathématiques d'Orsay, 1985.

 
 \bibitem[Du]{survey} Dujardin, Romain. {\em Bifurcation currents and equidistribution in parameter space}   in Frontiers in complex dynamics (celebrating John Milnor's 80th   birthday), edited by A. Bonifant, M. Lyubich, and S. Sutherland,  pp. 515--566. 

\bibitem[{DF}]{preper}  Dujardin, Romain; Favre, Charles. {\em Distribution of rational  maps with a 
preperiodic critical point}.
 Amer.  J. Math. 130 (2008), 979-1032.

 \bibitem[DL]{tangencies} Dujardin, Romain; Lyubich, Mikhail. {\em Stability and bifurcations for dissipative polynomial automorphisms of $\cd$}. 
Invent. Math. 200 (2015), 439--511. 

\bibitem[G]{gavosto}  Gavosto, Estela Ana. {\em Attracting basins in ${\mathbf{P}}^2$.} 
J. Geom. Anal. 8 (1998),  433--440.

\bibitem[HS]{hare sidorov} Hare, Kevin G. ;  Sidorov, Nikita. {\em  Two-dimensional self-affine sets with interior points, and the set of
 uniqueness.}
 Nonlinearity  29  (2016),  no. 1, 1--26.

\bibitem[HP]{hp} Hubbard, John H.; Papadopol, Peter. {\em Superattractive fixed points in $\mathbb{C}^n$.}
 Indiana Univ. Math. J. 43 (1994), no. 1, 321--365. 


\bibitem[J]{jonsson} Jonsson, Mattias. {\em Dynamics of polynomial skew products on $\cd$.} Math. Ann.  314 (1999), 403--447.


  \bibitem[Ly1]{lyubich bif} Lyubich, Mikhail.
\newblock {\em Some typical properties of the dynamics of rational mappings.}
\newblock Russian Math. Surveys 38:5 (1983), 154--155.

\bibitem[Ly2]{lyubich bif2} Lyubich, Mikhail.
{\em  An analysis of stability of the dynamics of rational functions.}
    Teoriya Funk., Funk. Anal. \& Prilozh., , no 42 (1984), 72--91
    (Russian). English translation: Selecta Mathematica Sovetica,  9 (1990),
    69--90. 

\bibitem[Ma]{mane fatou} Mañé , Ricardo.
{\em On a theorem of Fatou.}
Bol. Soc. Brasil. Mat. (N.S.) 24 (1993), no. 1, 1--11.

\bibitem[MSS]{mss}
  Ma{\~n}{\'e}, Ricardo;  Sad, Paulo;  Sullivan, Dennis.
\newblock \emph{On the dynamics of rational maps.}
\newblock Ann. Sci. {\'E}cole Norm. Sup. (4) 16 (1983),   193--217.

\bibitem[Mc]{mcm universal} McMullen, Curtis T. {\em The Mandelbrot set is universal.} The Mandelbrot set, theme and variations, 1--17, London Math. Soc. Lecture Note Ser., 274, Cambridge Univ. Press, Cambridge, 2000.

\bibitem[McS]{mcms} McMullen, Curtis T.; Sullivan, Dennis P. 
\newblock{\em
Quasiconformal homeomorphisms and dynamics. III. The Teichm{\"u}ller space
of a holomorphic dynamical system.} 
\newblock Adv. Math.  135  (1998),  351--395.

 \bibitem[N]{newhouse} Newhouse, Sheldon. {\em Non-density of Axiom $A(a)$ on $S^2$.} in {\em Global Analysis, } Proc. Symp. Pure Math. XIV, 191--203, AMS, Providence (1970).

\bibitem[PU]{prz urb} Przytycki, Feliks;  Urba\'nski, Mariusz. {\em Conformal fractals, Ergodic Theory methods.}  
London Mathematical Society Lecture Note Series, 371. Cambridge University Press, Cambridge, 2010. x+354 pp.

\bibitem[RY]{roesch yin} Roesch, Pascale; Yin Yongsheng. {\em The boundary of bounded polynomial Fatou components.}
C. R. Math. Acad. Sci. Paris 346 (2008),  877--880. 

\bibitem[Sh]{shishikura} Shishikura, Mitsuhiro. {\em The Hausdorff dimension of the boundary of the Mandelbrot set and Julia sets.} Ann. of Math. (2) 147 (1998),  225--267.
\bibitem[T]{tan lei} Tan, Lei. {\em  Hausdorff dimension of subsets of the parameter space for families of rational maps. (A generalization of Shishikura's result).} Nonlinearity 11 (1998), no. 2, 233--246.

\end{thebibliography}
\end{document}